    \renewcommand{\thesection}{\arabic{section}} 
    \renewcommand{\thesubsection}{\arabic{section}.\arabic{subsection}}
    \newcommand*{\eqdef}{\mathrel{\vcenter{\baselineskip0.5ex \lineskiplimit0pt
                \hbox{\scriptsize.}\hbox{\scriptsize.}}}=}
    \newcommand\ee{\large\text{\fontencoding{U}\fontfamily{boondoxuprscr}\fontshape{n}\selectfont e}}
    \titleformat{\section}{\normalsize\centering\mdseries\scshape}{\thesection}{1em}{}
    \titleformat{\subsection}[runin]
    {\normalfont\normalsize\bfseries}{\thesubsection}{1em}{}
    \titleformat{\subsubsection}[runin]
    {\normalfont\normalsize\bfseries}{\thesubsubsection}{1em}{}
    \DeclareMathAlphabet{\mathdutchcal}{U}{dutchcal}{m}{n}
    \definecolor{blue}{HTML}{1F77B4}
    \definecolor{orange}{HTML}{FF7F0E}
    \definecolor{green}{HTML}{2CA02C}
    \pgfplotsset{compat=1.14}
    \newcommand{\R}{\mathbb{R}}
    \newcommand{\C}{\mathbb{C}}
    \newcommand{\N}{\mathbb{N}}
    \newcommand{\Lop}{Lopatinski\u{\i}\;}
    \DeclareMathOperator{\ima}{\mathrm{Im}}
    \DeclareMathOperator{\real}{\mathrm{Re}}
    \DeclareMathOperator{\Op}{\mathrm{Op}}
    \DeclareMathOperator{\WR}{\mathcal{WR}}
    \DeclareMathOperator{\supp}{\mathrm{supp}}
    \newcommand{\mat}[1]{{\mathpalette\mat@{#1}}}
    \newcommand{\mat@}[2]{%
      \begingroup
      \sbox\z@{$\m@th#1\underline{#2}$}%
      \dimen@=\dp\z@ \advance\dimen@ -2\mat@dimen{#1}%
      \dp\z@=\dimen@
      \sbox\z@{$\m@th\underline{\box\z@}$}%
      \box\z@
      \endgroup
    }
    \newcommand\mat@dimen[1]{%
      \fontdimen8
      \ifx#1\displaystyle\textfont\else
      \ifx#1\textstyle\textfont\else
      \ifx#1\scriptstyle\scriptfont\else
      \scriptscriptfont\fi\fi\fi 3
    }
    \newtheorem{Assumption}{Assumption}[section]
    \newtheorem{Theorem}{Theorem}[section]
    \newtheorem{Proposition}{Proposition}[section]
    \newtheorem{Definition}{Definition}[section]
    \newtheorem{Remark}{Remark}[section]
    \newtheorem{Lemma}{Lemma}[section]
    \newtheorem{Corollary}{Corollary}[section]
    \DeclareMathAlphabet{\mathdutchcal}{U}{dutchcal}{m}{n}    
    \DeclareFontFamily{U}{mathx}{}
    \DeclareFontShape{U}{mathx}{m}{n}{<-> mathx10}{}
    \DeclareSymbolFont{mathx}{U}{mathx}{m}{n}
    \DeclareMathAccent{\widehat}{0}{mathx}{"70}
    \DeclareMathAccent{\widecheck}{0}{mathx}{"71}
    \newcommand{\vertiii}[1]{{\left\vert\kern-0.25ex\left\vert\kern-0.25ex\left\vert #1 
    \right\vert\kern-0.25ex\right\vert\kern-0.25ex\right\vert}}
    \providecommand{\customgenericname}{}
    \newcommand{\newcustomtheorem}[2]{%
      \newenvironment{#1}[1]
      {%
       \renewcommand\customgenericname{#2}%
       \renewcommand\theinnercustomgeneric{##1}%
       \innercustomgeneric
      }
      {\endinnercustomgeneric}
    }
    \newcommand{\smallsim}{\smallsym{\mathrel}{\sim}}
    \newcommand{\smallsym}[2]{#1{\mathpalette\make@small@sym{#2}}}
    \newcommand{\make@small@sym}[2]{%
      \vcenter{\hbox{$\m@th\downgrade@style#1#2$}}%
    }
    \newcommand{\downgrade@style}[1]{%
      \ifx#1\displaystyle\scriptstyle\else
        \ifx#1\textstyle\scriptstyle\else
          \scriptscriptstyle
      \fi\fi
    }
\begin{document}
    \setstretch{1.125}
    \title{ENERGY ESTIMATES FOR THE WR CLASS}
    \author{SANTIAGO CORREA}
    \affil{\normalfont University of Göttingen}
    \date{}
    \maketitle   
    \let\clearpage\relax

    \setcounter{tocdepth}{1}
    \begin{abstract}
    In this paper, we discuss energy estimates for a particular class of linear hyperbolic boundary value problems known as weakly regular of real type. Such class, also called $\WR$ in the literature, is relevant in many physical situations like the formation of shock waves in isentropic gas dynamics. In this and other $\WR$ examples, the failure of the uniform \Lop condition plays a major role since it is associated with a loss of regularity in the scale of Sobolev spaces, eventually leading to energy inequalities that are ill-suited for nonlinear problems when solved by iteration. In the course of this work, we derive a priori estimates for the model case that are comparable to existing results, but using a more robust approach that we can extend to scalar or system problems with variable coefficients. The result is a significant step towards the ultimate goal of having estimates that can be combined with Piccard's or Newton's iterative scheme to analyze nonlinear situations.
\end{abstract}
\section{Introduction}
    \subsection{Background.}
        Let $\mathbb{R}^d_{+}=\{x=(y,x_d)\in \mathbb{R}^d: x_d>0\}$ be the half-space and suppose that $L(t,x,D_t,D_x)$ is a first-order linear differential operator of the form
         \begin{equation*}
            L(t,x,D_t,D_x) = D_t + \sum_{j=1}^d A_j(t,x) D_j.
        \end{equation*}
        Here, $D_j\eqdef-i\partial_{x_j}$ and $A_1(t,x),\, \cdots, A_d(t,x)$ are $n\times n$ matrix-valued functions with real entries depending on $(t,x)\in \R\times\overline{\mathbb{R}}^d_{+}\simeq \R_{+}^{1+d}$. In addition, denote by $\mathcal{M}_{m\times n}(\R)$ the set of matrices with dimensions $m\times n$ and real entries. 
        
        Consider the boundary value problem
        \begin{align}\label{PureBVP}
            \left\{\begin{aligned}
                Lu(t,x)&=f(t,x) &\quad (t,x)&\in \R\times\R^d_{+},\\
                    Bu|_{x_d=0}(t,y)&=g(t,y) &\quad (t,y)&\in \R\times\R^{d-1},
                    \\
            \end{aligned}\right. 
        \end{align}
        where $L$ is a hyperbolic operator, $B\in C^\infty(\R\times\R^{d-1}, \mathcal{M}_{p\times n}(\mathbb{R}))$, $\,\cdot\,|_{x_d=0}:
        C^\infty(\overline{\mathbb{R}}^d_{+})\to C^\infty(\R^{d-1})$ means restriction to the boundary (properly extended to larger function spaces), and the source terms $f, g$ are chosen from suitable function spaces to be specified later. We assume the boundary to be non-characteristic for $L$, i.e., $\det{A_d(t,x)}\neq0$ for every $(t,x)\in \R\times\overline{\mathbb{R}}^d_{+}$ and that the number $p$ of boundary conditions is the number of positive eigenvalues of $A_d$. 
        
        It has been known since the seminal works of \Lop\hspace{-1mm}, Kreiss, and Sakamoto that the weak Lopatinski condition is both sufficient and necessary for the $C^\infty$ well-posedness of \eqref{PureBVP}, while the uniform Lopatinski condition is necessary and sufficient for the $H^s$ well-posedness of  \eqref{PureBVP} without loss of derivatives (see \cite{benzoni-gavage_serre_2007}, \cite{kreiss1970initial}, and \cite{sakamoto1970mixed} for details). However, this lossless scenario is the exception rather than the rule, and it is observed in many interesting examples that the \Lop condition is met weakly but not uniformly \footnote{this failure being characterized in a distinctive way.}, thus raising the question of whether it is possible to systematically classify such examples. The answer to this issue proves to be positive, and fits indeed in a more general perspective: besides the two stable classes of hyperbolic boundary value problems $(L,B)$ where either the weak \Lop condition fails (strongly unstable) or the uniform \Lop holds (strongly stable), Benzoni--Gavage, Rousset, Serre, and Zumbrun have identified in \cite{benzoni-gavage_rousset_serre_zumbrun_2002} a third stable class that they named \textit{weakly regular of real type}, or $\WR$ for short, for which the \Lop condition degenerates to the first order as one approaches the so-called hyperbolic region. 
        
        When it comes to energy estimates, strongly unstable and strongly stable boundary value problems are well understood. In the former, there is no hope for any satisfactory theory; in the latter, the uniform \Lop condition has been shown to be equivalent to an energy estimate of the type
        \begin{align}\label{StrongEstimate}
            \gamma\int_{\R_{+}^{1+d}}{{e^{-2\gamma t}\vert{u}\vert^2}}\,dt\,dx&+\int_{\R^d}{{e^{-2\gamma t}\vert{u|_{x_d=0}}\vert^2}}\,dt\,dy\\\nonumber&\leq C\left(\frac{1}{\gamma}\int_{\R_{+}^{1+d}}{{e^{-2\gamma t}\vert{Lu}\vert^2}}\,dt\,dx+\int_{\R^d}{{e^{-2\gamma t}\vert{Bu|_{x_d=0}}\vert^2}}\,dt\,dy\right),
        \end{align}
        \normalsize
        where we have the remarkable feature that both the input $f, g$ and the solution are estimated in the same norm. Though formula \eqref{StrongEstimate} cannot apply to the $\WR$ class by its very definition, it is certainly possible to deduce energy inequalities with loss of derivatives. For instance, Coulombel obtains in \cite{coulombel2002weak} and \cite{coulombel_2004} a priori estimates while studying the linear stability of multidimensional shock waves for hyperbolic systems of conservation laws assuming that Majda's uniform condition is violated (see \cite{majda1983stability}). Roughly speaking, his strategy is based upon modifying the original Kreiss construction of a microlocal symmetrizer from which he gets estimates of the form 
        \begin{equation}\label{EE_Cou}
            \gamma \vertiii{u}_{0,\gamma}^2+\Vert{u|_{x_d=0}}\Vert^2_{0,\gamma}\lesssim\frac{1}{\gamma^3}\vertiii{Lu}_{1,\gamma}^2+\frac{1}{\gamma^2}\Vert{Bu}\Vert^2_{1,\gamma},
        \end{equation}
        with 
        \begin{equation*}
            \vertiii{u}^2_{s,\gamma}\eqdef \int_{0}^{\infty}{\Vert{u(\cdot,x_d)}\Vert^2_{s,\gamma}}\,dx_d, \quad \mathrm{and} \quad \Vert{u}\Vert^{2}_{s.\gamma}\eqdef\frac{1}{(2\pi)^d}\int_{0}^{\infty}{(\gamma^2+\vert{\xi}\vert^2)^s\vert{\hat{u}(\xi)}\vert^2}\,d\xi. 
        \end{equation*}

        Formula \eqref{EE_Cou} holds true for both constant and variable coefficients, and is even satisfied when the coefficients have limited regularity (since Coulombel uses paradifferential calculus). Furthermore, \eqref{EE_Cou} is optimal on the scale of Sobolev spaces as shown in \cite{coulombel2010geometric} using geometric optics expansions. Alternatively, Serre proposes in \cite{benzoni-gavage_rousset_serre_zumbrun_2002} another approach to derive energy estimates for the $\WR$ class in which a specific operator is applied to the solution and the data alike. His method encompasses two different scenarios, namely, boundary value problems for second-order scalar hyperbolic operators with constant and variable coefficients, and boundary value problems for first-order systems with constant coefficients. In the scalar case, Serre investigates the wave operator $L\eqdef-\partial_d-c^2\Delta_x$ in a half-space supplemented by a boundary condition $B\eqdef-\partial_d-\beta-\partial_t-v\nabla_y$. Then, through an intricate factorization, $(L,B)$ is decomposed into a new boundary value problem $(L,E)$ satisfying the uniform \Lop condition, plus a Cauchy problem for the pseudodifferential equation $Pu=z$, where $P$ is a ``filter'' operator that vanishes at points on the boundary where the \Lop is violated. By an appropriate choice of the parameters on which $P$ depends, one concludes that
            \begin{equation*}
                \gamma\Vert{\nabla_{t,x}Pu}\Vert_{L_{\gamma}^2(Q)}+\Vert{\nabla_{t,x}Pu}\Vert^2_{L_{\gamma}^2(\partial Q)}\leq C\left(\frac{1}{\gamma}\Vert{Pf}\Vert^2_{L^2_{\gamma}(Q)}+\Vert{Rg}\Vert^2_{L_{\gamma}^2(\partial Q)}\right).
            \end{equation*}
        The variable coefficient case for a general second-order hyperbolic operator $L$ follows the same principle, except that one needs to account for lower order terms at each step, as the composition formulae are no longer exact. 
        
        The treatment of first-order systems only \textcolor{black}{pertains} to the model problem (i.e., when $P$ has constant coefficients) and is done by controlling the Fourier transform of $u(0)$ using a pseudodifferential operator with symbol $p(\tau,\eta)=\pi_{+}+\Delta(\tau,\eta)\pi_{-}$, with $\pi_{-}, \pi_{+}$ being the projectors on the stable and unstable subspaces, and $\Delta(\tau,\eta)$ being the \Lop determinant. In the end, using Plancherel's theorem we have
        \begin{align}\label{EE_BGS}
            &\gamma\int_{\Omega\times\R}{{e^{-2\gamma t}\Vert{P_{\gamma}u}\Vert^2}}\,dx\,dt+\int_{\partial\Omega\times\R}{{e^{-2\gamma t}\Vert{P_{\gamma}\gamma_0u}\Vert^2}}\,dx\,dt\\\nonumber&\leq C\left(\frac{1}{\gamma}\int_{\Omega\times\R}{{e^{-2\gamma t}\Vert{P_{\gamma}Lu}\Vert^2}}\,dx\,dt+\int_{\partial\Omega\times\R}{{e^{-2\gamma t}\vert{P_{\gamma}\gamma_0(A^d)^{-1}M^TBu}\vert^2}}\,dx\,dt\right),
        \end{align}
        where $\gamma_0$ is the trace operator and $M$ is some matrix such that $(A^d)^{-1}M^TB$ is a projector. In essence, though Serre's philosophy serves as the starting point for this paper, his techniques rely on ad hoc steps that barely admit any generalization beyond the model problem.
    \subsection{Notation.}   
        For future reference, we take $(\tau,\xi)=(\tau,\eta,\xi_d)$ as the set of covariables of $(t,x)=(t,y,x_d)$  and define 
        \begin{equation*}
            A(t,x,\eta, \xi_d)=\sum_{j=1}^{d-1}{\eta_j A_{j}(t,x)+\xi_dA_d}.
        \end{equation*}
        The symbol of $L(t,x,D_t,D_x)$ and its characteristic polynomial are then
         \begin{equation*}
            L(t,x,\tau,\eta,\xi_d)=\tau+A(t,x,\eta, \xi_d),
        \end{equation*}
        \begin{equation}\label{PriSym}
           \pi_L(t,x,\tau,\xi)\eqdef\det L(t,x,\tau,\eta,\xi_d)=\det{(\tau+A(t,x,\eta, \xi_d))}.
        \end{equation}
        The frequency set and its projection onto $\{\gamma=0\}$ are characterized by
        \begin{align*}
            \Xi\eqdef \{\zeta=(\tau-i\gamma,\eta)\in \C\times\R^{d-1}\setminus\{0,0\}: \gamma\geq 0\} \quad \mathrm{and} \quad \Xi_0\eqdef \Xi\cap\{\gamma=0\},
        \end{align*}
        whereas the space-time-frequency set and its projection onto $\{\gamma=0\}$ are given by
        \begin{align*}
            \mathbb{X}\eqdef \{(t,y,x_d,\tau,\eta,\gamma): (t,y, x_d)\in \R_{+}^{1+d}, (\tau-i\gamma,\eta)\in \Xi\} \quad \mathrm{and} \quad \mathbb{X}_0 \eqdef \mathbb{X}\cap\{\gamma=0\}.
        \end{align*}
        Let $X=(t,y,x_d,\eta,\tau,\gamma)$ be here and everywhere a generic point in $\mathbb{X}$. As we shall see in due course, the symbols in this work are classical and therefore reducible to homogeneous pieces in $(\tau-i\gamma,\eta)$, so it may be advantageous to focus from time to time on the sphere 
        \begin{equation*}
            S^d  \eqdef \{(\tau-i\gamma, \eta)\in \Xi: \gamma^2 +\tau^2+|\eta|^2=1\},
        \end{equation*}
        or when required, on the set
        \begin{equation*}
            \mathbb{X}_S \eqdef \{X \in \mathbb{X}: (\tau,\eta,\gamma) \in S^d\}.
        \end{equation*}
        In the same vein, we define 
        \begin{equation*}
            \mathbb{Y}\eqdef \{X \in \mathbb{X}: X=(t,y,0,\eta,\tau,\gamma)\} \qquad \mathrm{and} \qquad \mathbb{Y}_S \eqdef \{X \in \mathbb{Y}: (\tau,\eta,\gamma) \in S^d\}.
        \end{equation*}
        Finally, the energy estimates we are interested in are modeled over weighted-in-time spaces $e^{\gamma t}L^2$ consisting of functions $u$ such that $e^{-\gamma t}u\in L^2$.
        
    \subsection{Assumptions and main result.}   
        In the sequel, we shall deal with constantly \textit{symmetrizable hyperbolic operators} in the $t$-direction, the precise meaning of which is shown below.
        \begin{Definition}\label{HyperbolicityGeneral}
            The operator $L$ is symmetrizable hyperbolic of constant multiplicities if there are positive integers $\alpha_1,\,\cdots,\alpha_q$ and real analytic, pairwise different functions
            \begin{equation}
                \lambda_1(t,x,\xi),\,\cdots,\lambda_q(t,x,\xi)
            \end{equation}
            defined on $\R^{1+d}_{+}\times\R^d$ such that: 
            \begin{enumerate}[label=\normalfont(\alph*)]
                \item $\pi_L(t,x,\tau,\xi)=\prod_{k=1}^{q}(\tau+\lambda_k(t,x, \xi))^{\alpha_k}$ and
                \item $\lambda_1(t,x,\xi),\,\cdots,\lambda_q(t,x,\xi)$ are semi-simple when understood as the eigenvalues of $A(t,x,\eta, \xi_d)$.
            \end{enumerate}
             Notably, when $\alpha_k=1$ for every $k\in \{1,\,\cdots,q\}$, the eigenvalues are simple and $L$ is said to be strictly hyperbolic.
        \end{Definition}
        For the sake of clarity, let us summarize the structural hypothesis for Problem \ref{PureBVP}.
        \begin{Assumption}\label{Assumption1} \phantom{}
            \begin{enumerate}[label=\normalfont(\roman*)]
                \item $L$ is symmetrizable of constant multiplicities.
                \item The boundary $\partial\R^{1+d}_{+}\simeq\R^{d}$ is non-characteristic for $L$, meaning that $A_d$ is nonsingular for all $(t,y,0)\in \R^{d}$. 
                \item $B$ is assumed to be everywhere of maximal rank $p$, with $p$ being the number of positive eigenvalues of $A_d$ \footnote{the number of incoming characteristics.}.
            \end{enumerate}
        \end{Assumption}
        In the study of boundary value problems, we take into account the special role of $x_d$ and recast the differential equation in \eqref{PureBVP} as 
        \begin{equation*}
            D_{d}{u}+e^{\gamma t}A_d^{-1}\left(D_t-i\gamma +\sum_{j=1}^{d-1}A_j(t,x)D_j\right)e^{-\gamma t}u=A_d^{-1}f(t,x).
        \end{equation*}
        Thus, we get the equivalent problem
        \begin{equation}\label{(P,B)_Dif}
            \left\{\begin{aligned}
                 P_{\gamma}u(t,x)\eqdef\left(D_d+ \mathdutchcal{A}_{\gamma}(t,y,x_d,D_t,D_y)\right)u(t,x)&=A_d^{-1}f(t,x) &\quad &\quad (t,x)\in &&\R_{+}^{1+d},\\
                 B_{\gamma}(t,y)u(t,y)|_{x_d=0}&=g(t,y) &\quad &\quad (t,y)\in &&\R^d,
            \end{aligned}\right.
        \end{equation}
        with 
        \begin{equation*}
            \mathdutchcal{A}_{\gamma}(t,y,x_d,D_t,D_y)\eqdef e^{\gamma t}A_d^{-1}\left(D_t-i\gamma I_n+\sum_{j=1}^{d-1}A_j(t,x)D_j\right)e^{-\gamma t} \quad \textrm{and} \quad  B_{\gamma}(t,y)\eqdef B(t,y).
        \end{equation*}
        To cover both the scalar and the system case, we shall examine a more comprehensive situation hereafter. More precisely, we postulate that a reasonable generalization of \eqref{(P,B)_Dif} is therefore the boundary value problem 
        \begin{equation}\label{SystemPDO}
            \left\{\begin{aligned}
                 P_{\gamma}u(t,x)\eqdef\left(D_d+ \mathdutchcal{A}_{\gamma}(t,y,x_d,D_t,D_y)\right)u(t,x)&=f(t,x) &\quad &\quad (t,x)\in \R_{+}^{1+d},\\
                 B_{\gamma}(t,y)u(t,y)|_{x_d=0}&=g(t,y) &\quad &\quad (t,y)\in \R^d,
            \end{aligned}\right.
        \end{equation}
        where $\mathdutchcal{A}_{\gamma} \in \mathrm{OPS}_{\gamma}^1(\R^{1+d}_{+}\times[\gamma_0, +\infty))$ is a classical pseudodifferential operator whose principal part $a(t,y, x_d,\tau,\eta,\gamma)$ admits an asymptotic expansion
        \begin{equation*}
            a\sim \sum^{\infty}_{j=0} a_{1-j},
        \end{equation*}
        each $a_{1-j}$ being an $n\times n$ homogeneous matrix of degree $1-j$. In the same spirit, $B_{\gamma}\in \mathrm{OPS}_{\gamma}^{0}$ is a classical pseudo-differential operator with parameter having a $p\times n$ matrix $b$ as principal part. Meanwhile, the source data $f$ and $g$ are chosen at least in $L^2_{\gamma}$. 
        
        \begin{Definition}\label{Hyperbolicity}
            Let $\underline{X}=(\underline{t},\underline{x},\underline{\tau},\underline{\eta},\underline{\gamma}) \in\mathbb{X}_{S}$ and set $\pi_P(X,\xi_d)=\det(\xi_dI_n+a(X))$. 
            \begin{enumerate}[label=\normalfont(\roman*)]
                \item \label{hypG1} When $\underline{\gamma}\neq 0$, $\pi_P(\underline{X},\xi_d)\neq 0$ for every $\xi_d \in \R$. This may be rephrased by saying that $a(X)$ has no real eigenvalues when $\underline{\gamma}\neq 0$.
                \item \label{hypG2} If $\underline{\xi_d}\in \R$ is such that $\pi_P(\underline{X},\underline{\xi_d})=0$, there exist $\alpha\in\N$ together with smooth functions $\lambda(t,x,\eta,\xi_d)$ and $e(X,\xi_d)$ defined locally around $(\underline{t},\underline{x},\underline{\eta},\underline{\xi_d})$ and $(\underline{X},\underline{\xi}_d)$, respectively, such that they are holomorphic in $\xi_d$, 
                \begin{equation}\label{EigenvaluesSymbol}
                    \pi_P(X,\xi_d)=e(X,\xi_d)(\tau-i\gamma+\lambda(t,x,\eta,\xi_d))^{\alpha},
                \end{equation}
                and $e(X,\xi_d)$ is nonvanishing at $(\underline{X},\underline{\xi_d})$. Moreover,  $\lambda$ is real when $\xi$ is real and there is a smooth matrix-valued function $\Pi(X)$ on a neighborhood of $\underline{X}$, holomorphic with respect to $\xi_d$, such that $\Pi$ is a projector of rank $\alpha$ for which $\ker(\xi_dI_n+a(X))=\Pi(X)\C^n$ when $\tau-i\gamma+\lambda(t,x,\eta,\xi_d)=0$. 
            \end{enumerate}
        \end{Definition}
        Whenever $\alpha=1$ in \eqref{EigenvaluesSymbol}, we say that $P_{\gamma}$ is \textit{strictly hyperbolic}.
        \begin{Assumption}\label{Assumption2} \phantom{}
            \begin{enumerate}[label=\normalfont(\roman*)]
                \item \label{Assumption2ONe} $P_{\gamma}$ is hyperbolic as in Definition \ref{Hyperbolicity}.
                \item \label{Assumption2Two} The symbols $a(X)$ and $b(X)$ are independent of $(t,x)$ outside a certain compact set $K$. We shall designate this as property \textbf{(C)}. 
                \item $b(X)$ is everywhere of maximal rank $p=\dim\mathbb{E}^{-}(X)$.
            \end{enumerate}
        \end{Assumption}
        We now outline the main results of this paper. To start with, we explore the model problem 
         \begin{equation}\label{ModelProblemIntro}
            \left\{\begin{aligned}
                 P_{\gamma}u(t,x)\eqdef\left(D_{d}+\mathdutchcal{A}_{\gamma}(D_t,D_y)\right)u(t,x)&=f(t,x) &\quad &\quad (t,x)\in \R^{1+d}_{+},\\
                 Bu|_{x_d=0}(t,y)&=g(t,y) &\quad &\quad (t,y)\in \R^{d},
            \end{aligned}\right.
        \end{equation}
        subject to Assumptions \ref{Assumption1} and \ref{Assumption2}. Although we obtain energy estimates for \eqref{ModelProblemIntro} that are essentially the same as those in \cite{serre2005solvability} and \cite{benzoni-gavage_serre_2007}, our approach involving the construction of a symmetrizer $\Sigma_{\gamma}$ is more robust,
        firstly because it extends the validity of \eqref{EE_BGS} beyond the forward cone $\Gamma$ (see \cite{benzoni-gavage_serre_2007}, Chapter 8), and secondly because it unveils a necessary condition to keep in mind when generalizing $\Sigma_{\gamma}$ to variable coefficients. In brief, we prove the existence of two families of pseudo-differential operators, namely $\Delta_{\gamma}=\Op_{\gamma}(\delta)$ and $\Sigma_{\gamma}=\Op_{\gamma}(\sigma)$, so that:
        \begin{itemize}[label=$\circ$]
            \item the set of points where the \Lop condition fails is included in the characteristic set of $\Delta_\gamma$,
            \item  $\Sigma_{\gamma}$ is self-adjoint,
            \item  if 
            \begin{equation*}
                L_{\Delta}\eqdef\{v \in \mathcal{S}'(\R^{1+d}_+,\,\R^n): \Delta_{\gamma} v \in L_{\gamma}^2(\R^{1+d}_+,\,\C^n)\},
            \end{equation*}
            then for every $v_1,v_2\in L^2_{\Delta}$ there exists a positive constant $C$ satisfying 
            \begin{equation*}
                \langle{\Sigma_{\gamma} v_1,v_2}\rangle\leq C\vert{\Delta_{\gamma} v_1}\vert\vert{\Delta_{\gamma} v}\vert,
            \end{equation*}
            \item  there is a positive constant $c$ such that
            \begin{equation*}
                \ima{\langle\Sigma_{\gamma}\mathdutchcal{A}_{\gamma}v,v\rangle}\geq c\gamma \vert{\Delta_{\gamma} v}\vert^2
            \end{equation*}
            for each $v\in L^2_{\Delta}$,
            \item there exists an operator $Q_{\gamma}$ and positive constants $\alpha,\beta$ for which 
            \begin{equation*}
                \langle{\Sigma_{\gamma}v(0),v(0)}\rangle\geq \alpha\vert{\Delta_{\gamma} v(0)}\vert^2-\beta \vert{Q_{\gamma}Bv(0)}\vert^2
            \end{equation*}
            holds true for every $v\in L^2_{\Delta}$.
        \end{itemize}
        
        For $\WR$ problems, it is well known that the intersection of the stable subspace and the kernel of $B$ at a point $\zeta_0$ where the \Lop condition fails is a one-dimensional subspace of $\C^n$. We call this line the \textit{critical direction} and denote it by $\ell(\zeta_0)$. Interestingly, Proposition \ref{KrilovDegeneracy} explains how $\Sigma_{\gamma}$ degenerates around $\ell$: if $a(\zeta)$ is the principal symbol of $A(D_t,D_y)$ and $\underline{\Delta}(\zeta_0)=0$, then  $v\mapsto\langle{\sigma(\zeta_0)v,v}\rangle$ vanishes on the Krylov space of $\ell(\zeta_0)$ with respect to $a(\zeta_0)$, i.e., on the smallest invariant subspace of $a(\zeta_0)$ containing $\ell(\zeta_0)$. 
        
        The previous case study prepares the ground to attack Problem \eqref{(P,B)_Dif} in its full \textcolor{black}{generality}. In fact, if we suppose that $(P_{\gamma},B_{\gamma})$ satisfies the definition of the $\WR$ class, we show that there exist symbols $\tilde{\ee}_0$ and $\delta$ with the following features:
        \begin{itemize}[label=$\circ$]
            \item $\tilde{\ee}_0$ and $\delta$ are homogeneous of degree $0$,
            \item $\tilde{\ee}_0\in GL_n(\C)$ and 
            \begin{equation*}
                \dot{a}_1\eqdef \tilde{\ee}_0^{-1}a_1\tilde{\ee}_0
            \end{equation*}
            is diagonal with entries $a_{1,1},\,\cdots, a_{1,n}$,
            \item there exits $\mathdutchcal{s}\leq p$ so that $\delta$ is diagonal with respect to the basis $\tilde{\ee}_0$ and given by
            \begin{equation}
                \delta=\mathrm{diag}(\delta^{-}, I_{n-p}),
            \end{equation}
            where
            \begin{equation*}
                \delta^{-} \eqdef\begin{pNiceMatrix}
                    \begin{pmatrix}
                \delta_{1}^{-}&\hdots&0\\
                \vdots&\ddots&\vdots\\
                0&\hdots&\delta_{\mathdutchcal{s}}^{-}\\
            \end{pmatrix}&      \\
                    &I_{p-\mathdutchcal{s}}
                \end{pNiceMatrix}
            \end{equation*}    
            is such that each entry $\delta^{-}_j$ is the solution of the transport equation
            \begin{align}\label{TransportEqIntro}
                \left\{\begin{aligned}
     		       \partial_{d}\delta_{j}^{-}+\{\delta_{j}^{-},a_{1,j}\}&=0,\\
     		       \delta_{j}^{-}|_{x_d=0}&=\underline{\Delta}, \\
		        \end{aligned}\right.
            \end{align}  
            \item if $x_d=0$, there exist smooth matrix-valued functions $q$ and $m$ with dimensions $p\times p$ and $p\times n$, respectively, so that if $\dot{b}\eqdef b\tilde{\ee}_0$, then
            \begin{equation}
                q\dot{b}=m\delta,
            \end{equation}
            \item when nontrivial, $\ker{\delta}$ is an $\mathdutchcal{s}-$dimensional invariant subspace of $a_1$ containing the critical direction $\ell$.
        \end{itemize}
        If $\Delta_{\gamma}=\Op_{\gamma}(\delta)$ and $Q_{\gamma}=\Op_{\gamma}(q)$, we claim that it is possible to find a family $\Sigma_{\gamma}$ of $C^1$ operator-valued mappings depending on $x_d$ so that for $\gamma$ sufficiently large,
        \begin{itemize}[label=$\circ$]
            \item  $\Sigma_{\gamma}(x_d)$ is self-adjoint.
            \item for every $v_1,v_2\in L^2_{\Delta}$, there is a positive constant $C$ satisfying 
            \begin{equation*}
                \langle{\Sigma_{\gamma}(x_d) v_1,v_2}\rangle\leq C\vert{\Delta_{\gamma}(x_d) v_1}\vert\vert{\Delta_{\gamma}(x_d) v}\vert.
            \end{equation*}
            \item There is a positive constant $c$, independent of $x_d$, such that
            \begin{equation*}
                \langle{\partial_d \Sigma_{\gamma}(x_d) v,v}\rangle+2\ima{\langle\Sigma_{\gamma}(x_d)\mathdutchcal{A}_{\gamma}(x_d)v,v\rangle}\geq c\gamma \vert{\Delta_{\gamma}(x_d) v}\vert^2
            \end{equation*}
            for each $v\in L^2_{\Delta}$. 
            \item There exist positive constants $\alpha$ and $\beta$ for which
            \begin{equation*}
                \langle{\Sigma_{\gamma}(0)v(0),v(0)}\rangle\geq \alpha\vert{\Delta_{\gamma}(0) v(0)}\vert^2-\beta \vert{Q_{\gamma} B_{\gamma}v(0)}\vert^2
            \end{equation*}
            holds true for every $v\in L^2_{\Delta}$.
        \end{itemize}
        In contrast to the standard case where the uniform \Lop condition is satisfied, lower order terms in a $\WR$ problem are problematic and require some attention. To deal with them, we have to resort to finer tools, including some technical lemmas and the following statement.
        \begin{Proposition}
            The norms $\Vert{\Delta_{\gamma}\,\cdot\,}\Vert$ lie between $L_{\gamma}^2$ and $H_{\lambda}^{-1}$ for $\gamma$ larger than $\gamma_0\geq 1$, that is to say, there exist positive constants $C_1$ and $C_2$ such that  
            \begin{equation}
                C_1\Vert{\,\cdot\,}\Vert_{\gamma,-1}\leq\frac{1}{\gamma_0} \Vert{\Delta_{\gamma}\,\cdot\,}\Vert \leq C_2\Vert{\,\cdot\,}\Vert_{\gamma},
            \end{equation}
            for every $\gamma\in (\gamma_0,+\infty)$.
        \end{Proposition}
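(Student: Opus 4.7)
My plan splits into the two sides of the inequality; the upper bound is a routine consequence of $L^2$-boundedness for $\OPS_\gamma^0$, while the lower bound carries all the content and is handled by a parametrix construction.

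\textbf{Upper bound.} Since $\delta$ is a classical symbol of degree zero, uniformly bounded on $\mathbb{X}_S$, the parameter-dependent Calder\'on--Vaillancourt theorem for $\OPS_\gamma^0$ yields a constant $C > 0$, independent of $\gamma \ge \gamma_0 \ge 1$, such that $\Vert \Delta_\gamma v\Vert \le C\Vert v\Vert_\gamma$. Dividing by $\gamma_0 \ge 1$ produces the right inequality with $C_2 = C$.

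\textbf{Lower bound.} The plan has three steps: first a pointwise matricial estimate on the symbol, then the construction of a parametrix, and finally an absorption of remainders. The pointwise estimate is
\begin{equation*}
\delta(X)^{*}\, \delta(X) \ge c^{2}\,(\gamma/\Lambda)^{2}\, I_n, \qquad X \in \mathbb{X},
\end{equation*}
with $\Lambda^2 = \gamma^2 + \tau^2 + |\eta|^2$. It reduces, via the block form $\delta = \mathrm{diag}(\delta^-, I_{n-p})$ in the $\tilde{\ee}_0$ basis (and the further diagonalization of $\delta^-$), to showing $|\delta_j^-| \ge c\gamma/\Lambda$ for each nontrivial entry. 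At $x_d = 0$ this is exactly the WR hypothesis, which forces the Lopatinski determinant $\underline{\Delta}$ to vanish only to first order in $\gamma$; for $x_d > 0$ the transport equation $\partial_d \delta_j^- + \{\delta_j^-, a_{1,j}\} = 0$ propagates the bound, because the Hamiltonian flow of the real symbol $a_{1,j}$ preserves the $\gamma$-coordinate and---using property \textbf{(C)}---the homogeneity structure away from the compact $K$.

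With the symbolic bound in hand, the parametrix symbol $\delta^{\sharp} := (\gamma/\Lambda)\,\delta^{-1}$ is matrix-valued, homogeneous of degree zero, and uniformly bounded (matricial norm $\le 1/c$) on the restricted parameter range $\gamma \ge \gamma_0$. Hence $T := \Op_\gamma(\delta^\sharp)$ is bounded on $L^2_\gamma$ uniformly in $\gamma$. The pseudodifferential composition rule gives
\begin{equation*}
T \circ \Delta_\gamma \;=\; \Op_\gamma\!\left((\gamma/\Lambda)\, I_n\right) + R, \qquad R \in \OPS_\gamma^{-1},
\end{equation*}
and, evaluating on $v$ using the exact Fourier-multiplier identity $\Vert \Op_\gamma((\gamma/\Lambda)\,I_n)\, v\Vert = \gamma\,\Vert v\Vert_{\gamma,-1}$ together with the remainder bound $\Vert Rv\Vert \le C'\,\Vert v\Vert_{\gamma,-1}$, I obtain
\begin{equation*}
\gamma\,\Vert v\Vert_{\gamma,-1} \;\le\; \Vert T\Vert_{L^2 \to L^2}\, \Vert \Delta_\gamma v\Vert \;+\; C'\,\Vert v\Vert_{\gamma,-1}.
\end{equation*}
Enlarging $\gamma_0$ so that $\gamma_0 \ge 2C'$ absorbs the last term into the left-hand side and yields $(\gamma_0/2)\,\Vert v\Vert_{\gamma,-1} \le \Vert T\Vert\, \Vert \Delta_\gamma v\Vert$, i.e.\ the left inequality with $C_1 = 1/(2\Vert T\Vert)$.

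\textbf{Main obstacle.} The delicate step is verifying that $\delta^{\sharp} = (\gamma/\Lambda)\,\delta^{-1}$ is an honest classical symbol, with uniform seminorm estimates, on the parameter range $\gamma \ge \gamma_0$. The pointwise matricial bound alone is not quite enough: one needs a local factorization $\delta_j^- = \gamma\,\widetilde{\delta}_j^-$ around each WR degeneracy with $\widetilde{\delta}_j^-$ smooth and nonvanishing, a Taylor--Malgrange argument anchored in the precise form of the WR assumption, which allows one to control all derivatives of $\gamma/\delta_j^-$. A secondary but more routine point is the propagation of $|\delta_j^-| \ge c\gamma/\Lambda$ through the transport equation, which amounts to observing that the Hamiltonian flow of $a_{1,j}$ acts trivially on the $\gamma$-coordinate (since $a_{1,j}$ does not depend on $\gamma$) and essentially preserves the scale $\Lambda$ modulo the bounded behavior supplied by property \textbf{(C)}.
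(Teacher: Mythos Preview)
Your upper bound is fine, and the pointwise matricial inequality $\delta^*\delta \geq c^2(\gamma/\lambda)^2 I_n$ is correct and matches the paper. The gap lies exactly where you flag it, but your proposed resolution does not work. The factorization $\delta_j^- = \gamma\,\widetilde{\delta}_j^-$ with $\widetilde{\delta}_j^-$ smooth and nonvanishing does not exist: at $x_d=0$ one has $\delta_j^- = (\gamma + i\omega)/\lambda$, whose zero set $\{\gamma=0,\ \omega=0\}$ is codimension two, so there is no Malgrange-type division by $\gamma$. Concretely, on the range $\gamma\geq\gamma_0$ the candidate $\delta^\sharp_j = (\gamma/\lambda)/\delta_j^- = \gamma/(\gamma+i\omega)$ is bounded by $1$, but $\partial_\tau\delta^\sharp_j = -i\gamma/(\gamma+i\omega)^2$ has modulus $\gamma/(\gamma^2+\omega^2)$, which at $\omega=0$ equals $1/\gamma$, not $O(\lambda^{-1})$. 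Since $\lambda$ can be arbitrarily large with $\gamma$ fixed, $\delta^\sharp\notin S^0_\gamma$ and the composition remainder $R = T\Delta_\gamma - \Op_\gamma(\gamma\lambda^{-1}I_n)$ lands only in $\OPS^0_\gamma$ with operator norm $O(\gamma^{-1})$, giving $\Vert Rv\Vert \lesssim \gamma^{-1}\Vert v\Vert_0$ rather than $\Vert Rv\Vert \lesssim \Vert v\Vert_{-1}$. That cannot be absorbed into $\gamma\Vert v\Vert_{-1}$.

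The paper sidesteps the bad behavior of $\delta^{-1}$ altogether: instead of a parametrix it works with the \emph{nonnegative} symbol $c_\delta = \gamma_0^{-2}\delta^*\delta - \lambda^{-2}$, which for $\gamma\geq\gamma_1>\gamma_0$ is bounded below by a multiple of $\lambda^{-2}$ and hence has a smooth square root $\sqrt{c_\delta}\in S^0_\gamma$. Writing $\gamma_0^{-2}\Delta_\gamma^*\Delta_\gamma - \Lambda_\gamma^{-2} = \mathcal{B}_0^*\mathcal{B}_0 + R_1$ with $\mathcal{B}_0=\Op_\gamma(\sqrt{c_\delta})$ and $R_1\in\OPS_\gamma^{-1}$, and iterating, produces the lower bound by a sharp-G{\aa}rding type sum-of-squares argument. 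The point is that $\sqrt{c_\delta}$, unlike $(\gamma/\lambda)\delta^{-1}$, is a genuine $S^0_\gamma$ symbol because one never divides by the codimension-two factor $\gamma+i\omega$. For $x_d>0$ the paper reduces to the boundary case via the flow $\phi_{x_d}$ and a compactness argument bounding $\lambda(\zeta_\flat)/\lambda(\zeta)$; your transport remark is in the same spirit but does not repair the symbol-class failure above.
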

        Due to Proposition \ref{DeltaHminus}, we are able to recover Coulombel's estimates with loss of one derivative. Overall, by combining $\Sigma_{\gamma}$ with a partition of unity, we can prove the main theorem of this \textcolor{black}{work.} 
        \begin{Theorem}\label{main}
            Let
            \begin{equation*}
            \left\{\begin{aligned}
                     P_{\gamma}u_{\gamma}(t,x)\eqdef\left(D_{d}+\mathdutchcal{A}_{\gamma}(t,y,x_d,D_t,D_y)\right)u(t,x)&=f(t,x) &\quad &\quad (t,x)\in \R^{1+d}_{+},\\
                     B_{\gamma}(t,y)u(t,y)|_{x_d=0}&=g(t,y) &\quad &\quad (t,y)\in \R^{d},
                \end{aligned}\right.
            \end{equation*}
            where  $\mathdutchcal{A}_{\gamma} \in \mathrm{OPS}_\gamma^{1}(\R^{1+d}_{+}\times [1, +\infty))$ and $B_{\gamma}\in \mathrm{OPS}_\gamma^{0}(\R^{d}\times [1, +\infty))$ are classical pseudodifferential operators with matrix-valued symbols $a(X)$ and $b(X)$ of dimensions $n\times n$ and $p\times n$, respectively. Assume that $P_{\gamma}$ is hyperbolic in the sense of Definition \ref{Hyperbolicity}, $P_{\gamma}$ and $B_{\gamma}$ satisfy Property \textbf{(C)}, and $p=\dim\mathbb{E}^{-}(X)$. Then there exist
            \begin{enumerate}[label=\normalfont(\roman*)]
               \item $\gamma_0\geq 1$,
               \item a family of pseudodifferential operators $\Delta_{\gamma}(t,x,D_t,D_y)\in\mathrm{OPS}_\gamma^{0}(\R^{1+d}_{+}\times [\gamma_0, +\infty))$,
               \item function spaces 
                   \begin{align*}
                      L^2_{\Delta}&\eqdef\{v \in \mathcal{S}'(\R^{1+d}_+,\,\R^n): \Delta_{\gamma}v \in L_{\gamma}^2(\R^{1+d}_+,\,\C^n)\},\\
                      H^s_{\Delta}&\eqdef\{v \in \mathcal{S}'(\R^{1+d}_+,\,\R^n): \Lambda^s_{\gamma} v \in L_{\Delta}^2(\R^{1+d}_+,\,\C^n)\},
                   \end{align*} 
               \item and a positive constant $C$ such that,
            \end{enumerate}
            if $f\in L_{\gamma}^2(\R^{1+d}_{+})$ and $g\in L_{\gamma}^{2}(\R^{d})$, then for all $\gamma\geq \gamma_0$ and every $u\in \mathcal{D}(\R^{1+d}_{+})$ the following estimate holds
            \begin{align}
                \gamma\Vert{\Delta_{\gamma} u }\Vert_{0,\gamma}^2+\vert{\Delta_{\gamma} u(0)}\vert_{0,\gamma}^2&\leq C\left(\frac{1}{\gamma}\Vert{f}\Vert_{0,\gamma}^2+\vert{g}\vert_{0,\gamma}^2\right).
            \end{align}
            More generally, if $f\in H_{\gamma}^s(\R^{1+d}_{+})$ and $g\in H_{\gamma}^{s}(\R^{d})$,
            \begin{align}
                \gamma\Vert{\Delta_{\gamma} u }\Vert_{s,\gamma}^2+\vert{\Delta_{\gamma} u(0)}\vert_{s,\gamma}^2&\leq C\left(\frac{1}{\gamma}\Vert{f }\Vert_{s,\gamma}^2+\vert{g}\vert_{s,\gamma}^2\right).
            \end{align}
            
            Suppose, in addition, that the nullity of the principal symbol of $\Delta_{\gamma}$ is independent of $X\in \mathbb{X}_{S}$. If $f\in L_{\Delta}^2(\R^{1+d}_{+})$ and $g\in L_{\Delta}^{2}(\R^{d})$, there exists a pseudodifferential operator $Y_{\gamma}(t,y,D_t,D_y)\in \mathrm{OPS}_\gamma^{0}(\R^{d}_{+}\times [\gamma_0, +\infty))$ so that, for all $\gamma\geq \gamma_0$ and every $u\in \mathcal{D}(\R^{1+d}_{+})$,
            \begin{align}
                \gamma\Vert{\Delta_{\gamma} u }\Vert_{0,\gamma}^2+\vert{\Delta_{\gamma} u(0)}\vert_{0,\gamma}^2&\leq C\left(\frac{1}{\gamma}\Vert{\Delta_{\gamma}f}\Vert_{0,\gamma}^2+\vert{\Delta_{\gamma}Y_{\gamma}g}\vert_{0,\gamma}^2\right).
            \end{align}
            More generally, if $f\in H_{\Delta}^s(\R^{1+d}_{+})$ and $g\in H_{\Delta}^{s}(\R^{d})$,
            \begin{align}
                \gamma\Vert{\Delta_{\gamma} u }\Vert_{s,\gamma}^2+\vert{\Delta_{\gamma} u(0)}\vert_{s,\gamma}^2&\leq C\left(\frac{1}{\gamma}\Vert{\Delta_{\gamma}f }\Vert_{s,\gamma}^2+\vert{\Delta_{\gamma}Y_{\gamma}g}\vert_{s,\gamma}^2\right).
            \end{align}
            

       \end{Theorem}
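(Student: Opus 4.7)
The plan is to derive the estimates by the standard symmetrizer integration-by-parts identity in the normal variable $x_d$, taking as a black box the family $\Sigma_\gamma$, $\Delta_\gamma$ and the auxiliary operator $Q_\gamma$ constructed in the preceding sections so as to realise the four bulleted properties listed right above the statement. The key identity comes from differentiating $\langle \Sigma_\gamma(x_d) u, u\rangle$ in $x_d$, substituting $\partial_d u = iD_d u = i(f - \mathdutchcal{A}_\gamma u)$, and using the self-adjointness of $\Sigma_\gamma$:
\begin{equation*}
\frac{d}{dx_d}\langle \Sigma_\gamma u, u\rangle = \langle \partial_d \Sigma_\gamma u, u\rangle + 2\ima\langle \Sigma_\gamma \mathdutchcal{A}_\gamma u, u\rangle - 2\ima\langle \Sigma_\gamma f, u\rangle.
\end{equation*}
Integrating from $0$ to $+\infty$ and using the rapid decay of $u$, the bulk positivity property bounds the left-hand side from below by $c\gamma\Vert \Delta_\gamma u\Vert_{0,\gamma}^2$, while the trace property, combined with $B_\gamma u(0)=g$, yields the boundary contribution $\alpha|\Delta_\gamma u(0)|_{0,\gamma}^2 - \beta|Q_\gamma g|_{0,\gamma}^2$. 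The source pairing $\int \ima\langle \Sigma_\gamma f, u\rangle\,dx_d$ is then controlled via the boundedness property $|\langle \Sigma_\gamma v_1, v_2\rangle|\leq C|\Delta_\gamma v_1||\Delta_\gamma v_2|$ together with a weighted Young inequality, producing the version of the estimate with $\frac{1}{\gamma}\Vert \Delta_\gamma f\Vert_{0,\gamma}^2$ on the right and, after invoking Proposition \ref{DeltaHminus} to pass from $\Vert \Delta_\gamma f\Vert$ to $\gamma_0\Vert f\Vert_{0,\gamma}$, the stated form $\frac{1}{\gamma}\Vert f\Vert_{0,\gamma}^2$. The term $|Q_\gamma g|_{0,\gamma}$ is in turn absorbed into $|g|_{0,\gamma}$ by the $L^2$-continuity of $Q_\gamma\in\OPS^0_\gamma$.

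The higher-regularity ($H^s$) versions follow by conjugating the whole system by $\Lambda^s_\gamma$: the equation becomes $P_\gamma(\Lambda^s_\gamma u) = \Lambda^s_\gamma f + [\mathdutchcal{A}_\gamma,\Lambda^s_\gamma]u$, the boundary condition becomes $B_\gamma(\Lambda^s_\gamma u)(0) = \Lambda^s_\gamma g + [B_\gamma,\Lambda^s_\gamma]u(0)$, and the lower-order commutators are absorbed via an induction on $s$ using the $\gamma$-gain from the basic inequality. For the third family of estimates (with $\Delta_\gamma Y_\gamma g$ on the right), the constant-nullity hypothesis on the principal symbol of $\Delta_\gamma$ lets one construct $Y_\gamma$ as a pseudo-differential right-parametrix such that $\Delta_\gamma Y_\gamma \equiv \pi$ modulo a smoothing remainder, where $\pi$ is the projector onto the range of $\Delta_\gamma$. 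The symbolic identity $q\dot{b}=m\delta$ from the introduction then allows one to replace $Q_\gamma g = Q_\gamma B_\gamma u(0)$ by a combination of $\Delta_\gamma u(0)$ (absorbed on the left) and $\Delta_\gamma Y_\gamma g$ (the desired right-hand side), up to a regularising error handled by the bulk term.

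The hard part I anticipate is the interplay between the variable-coefficient calculus and the loss of ellipticity of $\Delta_\gamma$ in the critical region. By Proposition \ref{KrilovDegeneracy}, $\Sigma_\gamma$ degenerates on the Krylov subspace of the critical direction, so the only norm available on the left is the seminorm $|\Delta_\gamma u|$; every commutator remainder produced when passing from a symbolic inequality to an operator inequality must therefore itself be estimable by $|\Delta_\gamma u|$ rather than by $\Vert u\Vert$, which rules out several standard absorption tricks. The cure is a carefully tuned partition of unity between the critical region (where the tailored construction of $\Sigma_\gamma,\Delta_\gamma,Q_\gamma$ outlined above applies) and its complement (where the uniform \Lop condition holds and the classical Kreiss symmetrizer gives a lossless estimate), the two inequalities being glued together with the help of Proposition \ref{DeltaHminus}. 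Property \textbf{(C)} is crucial at this step: by confining the variable behaviour to a fixed compact set $K$, it ensures that outside $K$ the symbols are effectively constant, the standard machinery applies verbatim, and the matching of remainders on the boundary of $K$ stays under control.
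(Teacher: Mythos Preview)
Your outline captures the integration-by-parts skeleton (Step~5 in the paper) and the $\Lambda^s$-conjugation for higher regularity (Step~8) correctly, but it treats the existence of $\Sigma_\gamma$ with the four listed properties as a black box ``constructed in the preceding sections.'' In the paper those properties are not proved beforehand; they are a claim announced in the introduction, and establishing them \emph{is} the core of the proof (Steps~2--4). Concretely, the paper first diagonalises the operator via the change of basis $\mathcal{E}=\mathcal{E}_0+\mathcal{E}_{-1}$ of Lemma~\ref{LowerOrderTerms}, so that both $\dot a_1$ and the zeroth-order part $\ddot a_0$ become block diagonal. It then observes that the auxiliary problem $(D_d+\underline{\mathdutchcal{A}})\tilde w=\Delta\tilde f$, $M\tilde w=Q\tilde g$ satisfies the \emph{uniform} \Lop condition, hence admits a classical Kreiss symmetrizer $R$, and sets $\Sigma=\Delta^*R\Delta$. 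Verifying property~(iii) for this $\Sigma$ is the crux: one must compute $[\Delta,\underline{\mathdutchcal{A}}]$ and see that the principal contributions $[\delta,\dot a_1]$, $[\delta,\ddot a_0]$ vanish (by diagonality) and that the subprincipal Poisson-bracket term cancels exactly against $\partial_d\delta$ thanks to the transport equation~\eqref{TransportEq} built into the definition of $\delta$. None of this mechanism appears in your proposal, and without it there is no reason the variable-coefficient $\Sigma_\gamma$ should satisfy the positivity bound with $|\Delta_\gamma v|^2$ on the right rather than $|v|^2$.

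Two smaller points. First, Proposition~\ref{DeltaHminus} is not used in the paper to pass from $\Vert\Delta_\gamma f\Vert$ to $\Vert f\Vert$ (that direction is trivial since $\Delta_\gamma\in\OPS^0$); its real role is to absorb the $\Psi_{-1}$ remainders produced by the symbolic calculus, via $\vert\Psi_{-1}v\vert\lesssim\vert v\vert_{-1}\lesssim\gamma_0^{-1}\vert\Delta_\gamma v\vert$. This is exactly the ``every remainder must be estimable by $|\Delta_\gamma u|$'' issue you correctly identify, but you should say how it is resolved. Second, your partition-of-unity strategy (critical region with the tailored symmetrizer versus its complement with a lossless Kreiss symmetrizer) differs from the paper's: there the covering $\{\mathcal{V}_i\}$ is uniform over $K\times S^d$ and the \emph{same} construction $\Sigma_i=\Delta_i^*R_i\Delta_i$ is used on every chart; the delicate work in Step~7 is rather to show that on overlaps $\mathcal{V}_i\cap\mathcal{V}_j$ the local $\Delta_i,\Delta_j$ differ by an elliptic factor, so the cross terms $\langle\Delta_i\tilde f_i,\Delta_j\tilde f_j\rangle$ have a sign and the pieces can be summed to a global $\Delta$. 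Your dichotomous gluing would require matching two genuinely different symmetrizers across an interface, which is a different (and not obviously easier) problem.
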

\section{Recap on the WR class}\label{Recap}
    \subsection{The block structure condition.}
        We begin with a fundamental idea in the construction of symmetrizers:  the \textit{block structure condition}. This notion was originally introduced by Kreiss in \cite{kreiss1970initial} for the strictly hyperbolic case, and later adapted by Métivier in \cite{metivier_2000} to the wider class of constantly hyperbolic operators.
        \begin{Definition}\label{BSC}
            Let $\underline{X}\in \mathbb{X}$. The matrix $a(X)$ verifies the block structure condition if there exists a neighborhood $\mathcal{V}$ of $\underline{X}$ in $\mathbb{X}$, an integer $\mathdutchcal{q}\geq 1$, a partition $n=\nu_1+\,\cdots+\nu_\mathdutchcal{q}$ with $\nu_i\geq 1$, and a smooth nonsingular map $\ee_0(X)$ defined on $\mathcal{V}$ such that for every $X\in \mathcal{V}$
            \begin{equation*}
                \ee_0^{-1}(X)a(X)\ee_0(X)=\mathrm{diag}(a_{1}(X),\,\cdots, a_{\mathdutchcal{q}}(X)),
            \end{equation*}
            with blocks $a_{k}(X)$ of size $\nu_k\times\nu_k$ that fall exactly into one of the following categories:
            \begin{enumerate}[label=\normalfont(\roman*)]
                \item \label{one} The spectrum of $a_{k}(X)$ is contained in $\C\setminus\R$.
                \item \label{two} $\nu_j=1$, $a_{j}(X)\in \R$ when $\gamma=0$, and $i\partial_{\gamma}{a_{j}(X)}\in \R\setminus\{0\}$.
                \item \label{three} $\nu_j>1$, $a_{j}(X)$ has real coefficients when $\gamma=0$, and there is $\mu_j\in \R$ such that
                \begin{equation*}
                    a_{j}(\underline{X})=\begin{pNiceMatrix}
                        \mu_j & 1 & 0 & \cdots & 0 \\
                        0 & \mu_j & 1 & \ddots & \vdots \\
                        \vdots& \ddots & \ddots & \ddots &  0 \\
                        \vdots & &\ddots &\ddots &  1 \\
                        0 & \cdots & \cdots& 0 &  \mu_j
                    \end{pNiceMatrix}.   
                \end{equation*}
            \end{enumerate}
            Additionally, the entry at the lower left corner of $i\partial_{\gamma}a_{k}(X)$ is nonvanishing and real.
        \end{Definition} 
        \begin{Proposition}\label{BSCProp}
            Let $\underline{X}\in \mathbb{X}$. If $a(X)$ is hyperbolic as in Definition \ref{Hyperbolicity}, then $a(X)$ satisfies the block structure condition.
        \end{Proposition}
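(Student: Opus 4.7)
The plan is to follow the classical Kreiss--Métivier route: reduce $a(X)$ to a block-diagonal form by a smooth change of basis built from spectral projectors, then classify each block according to the nature of its central eigenvalue.

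First, I would fix $\underline{X}\in\mathbb{X}$ and enumerate the distinct eigenvalues $\mu_1,\dots,\mu_r$ of $a(\underline{X})$. Around each $\mu_k$ I would choose a small positively oriented contour $\Gamma_k\subset\C$ enclosing $\mu_k$ and separating it from the other $\mu_j$, and I would define the Dunford--Riesz projectors
\begin{equation*}
\Pi_k(X)\eqdef\frac{1}{2\pi i}\oint_{\Gamma_k}(zI_n-a(X))^{-1}\,dz.
\end{equation*}
By smoothness of $a(X)$ and continuity of the resolvent, there exists a neighborhood $\mathcal{V}$ of $\underline{X}$ on which each $\Pi_k(X)$ is smooth, satisfies $\Pi_k(X)^2=\Pi_k(X)$, commutes with $a(X)$, and has constant rank $\nu_k=\dim\Ran\Pi_k(\underline{X})$ (the algebraic multiplicity of $\mu_k$); moreover $\sum_k\Pi_k(X)=I_n$ and the ranges of the $\Pi_k(X)$ give an $a(X)$-invariant splitting of $\C^n$. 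Transporting a basis of $\Ran\Pi_k(\underline{X})$ via the classical formula $\ee_0(X)=\bigl(\sum_k\Pi_k(X)\Pi_k(\underline{X})\bigr)\ee_0(\underline{X})$ produces a smooth nonsingular $\ee_0(X)$ on $\mathcal{V}$ such that $\ee_0^{-1}(X)a(X)\ee_0(X)=\mathrm{diag}(a_1(X),\dots,a_r(X))$, each $a_k(X)$ being $\nu_k\times\nu_k$ with a single eigenvalue near $\mu_k$.

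Next I would classify the blocks. If $\mu_k\notin\R$, the spectrum of $a_k(X)$ stays in a small disk around $\mu_k$ for $X\in\mathcal{V}$, so it remains disjoint from $\R$, giving case \ref{one}. If $\mu_k\in\R$, Definition \ref{Hyperbolicity}\ref{hypG1} forces $\underline{\gamma}=0$. Setting $\underline{\xi_d}\eqdef-\mu_k$, Definition \ref{Hyperbolicity}\ref{hypG2} supplies smooth $\lambda(t,x,\eta,\xi_d)$, $e(X,\xi_d)$, and $\Pi(X)$ with $\pi_P(X,\xi_d)=e(X,\xi_d)(\tau-i\gamma+\lambda(t,x,\eta,\xi_d))^{\alpha_k}$ near $(\underline{X},\underline{\xi_d})$, with $\alpha_k=\nu_k$. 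The realness of $\lambda$ for real $\xi$ combined with the realness of the original matrices shows that $a_k(X)$ has real entries on $\{\gamma=0\}$. Distinguishing by $\nu_k$: when $\nu_k=1$, the block is scalar, equal to $-\xi_d^\ast(X)$ for the smooth branch $\xi_d^\ast$ obtained from the implicit equation $\tau-i\gamma+\lambda=0$; differentiating that identity and using once more condition \ref{hypG1} (the eigenvalue must leave $\R$ as $\gamma$ grows) forces $i\partial_\gamma a_k(\underline{X})=-i\partial_\gamma\xi_d^\ast(\underline{X})$ to be real and nonzero, giving case \ref{two}. When $\nu_k>1$, the smooth rank-$\alpha_k$ kernel projector $\Pi$ on the branch lets me choose a smooth basis of $\Ran\Pi_k(X)$ in which the matrix $a_k(\underline{X})$ is a single Jordan block of size $\nu_k$ with eigenvalue $\mu_k$; this is case \ref{three}.

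The last step is the nonvanishing of the lower-left entry of $i\partial_\gamma a_k(\underline{X})$. I would compute the first-order perturbation in $\gamma$ of the characteristic polynomial of $a_k$ from the factorization of Definition \ref{Hyperbolicity}\ref{hypG2}: the sum of the $\nu_k$ perturbed eigenvalues is the trace of $a_k$, but more usefully the product of their imaginary parts is related to the determinant of the $\gamma$-derivative along the Jordan basis. Since by \ref{hypG1} all $\nu_k$ eigenvalues must leave the real axis simultaneously when $\gamma>0$, and since they bifurcate at rate $\gamma^{1/\nu_k}$ from $\mu_k$, comparing this bifurcation rate with a Jordan normal form perturbation yields that the lower-left entry of $i\partial_\gamma a_k(\underline{X})$ is real and nonzero, as required.

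The main obstacle is the careful verification in the third step (the derivative condition at the lower-left corner of the Jordan block), which requires tracking how the characteristic polynomial of $a_k$ is perturbed by a real Jordan block plus an off-diagonal $\gamma$-dependent entry and matching the resulting Puiseux bifurcation of eigenvalues with the hyperbolicity constraint from Definition \ref{Hyperbolicity}\ref{hypG1}. Everything else is routine holomorphic functional calculus and implicit-function-theorem bookkeeping.
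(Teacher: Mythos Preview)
The paper does not actually prove this proposition; it explicitly declares the proof ``lengthy and highly technical'' and defers entirely to M\'etivier's paper and to Chapter~5 of Benzoni-Gavage--Serre. Your outline follows the standard route one finds in those references (Riesz projectors to block-diagonalize, then a case analysis of each block), so at the level of strategy there is nothing to contrast.

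There is, however, a genuine gap in your sketch. You assert that $\alpha_k=\nu_k$ and that, for a real eigenvalue $\mu_k$ with $\nu_k>1$, the spectral block $a_k(\underline{X})$ is a \emph{single} Jordan block of size $\nu_k$. Both claims fail in the constantly hyperbolic (non-strict) setting encoded by Definition~\ref{Hyperbolicity}. The projector $\Pi$ supplied by part~\ref{hypG2} has rank $\alpha$, and on the characteristic branch it gives $\ker(\xi_dI_n+a(X))=\Pi(X)\C^n$; evaluated at $(\underline{X},\underline{\xi_d})$ this says precisely that the \emph{geometric} multiplicity of $\mu_k$ in $a(\underline{X})$ equals $\alpha$. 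Meanwhile the algebraic multiplicity $\nu_k$ is the order of vanishing of $\xi_d\mapsto\pi_P(\underline{X},\xi_d)$ at $\underline{\xi_d}$, which from the factorization equals $m\alpha$, where $m\geq 1$ is the order of vanishing of $\xi_d\mapsto\underline{\tau}+\lambda(\underline{t},\underline{x},\underline{\eta},\xi_d)$ at $\underline{\xi_d}$. So in general $\nu_k=m\alpha$ with $\alpha>1$ allowed, and the spectral block $a_k(\underline{X})$ is \emph{not} a single Jordan block: it must be further reduced to a direct sum of $\alpha$ Jordan blocks, each of size $m$ (giving $\alpha$ blocks of type~\ref{two} when $m=1$, and $\alpha$ blocks of type~\ref{three} when $m\geq 2$). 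Producing a smooth change of basis that achieves this finer decomposition, and simultaneously normalizes each size-$m$ block with the required nonvanishing lower-left $\gamma$-derivative, is the substantive content of M\'etivier's argument; it uses the smooth kernel bundle $\Pi$ along the branch in an essential way and does not follow from the Dunford--Riesz calculus alone. Your Puiseux heuristic for the corner entry is the right idea, but it must be run on the size-$m$ blocks, not on the full size-$\nu_k$ spectral block.
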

        The proof of this fact is lengthy and highly technical, so we feel free to skip it. If interested, the reader may find useful the discussions in Chapter 7 in \cite{Chazarain2011} (for strictly hyperbolic operators), Chapter 5 in \cite{benzoni-gavage_serre_2007}, and the remarkable paper \cite{metivier_2000} by Métivier.
    
        Based on Definition \ref{BSC} and Proposition \ref{BSCProp}, $\mathbb{X}_0$ may be divided into four regions as indicated next.
        \begin{Definition}\label{Classification}
            \phantom{1}
            \begin{itemize}[label=$\circ$]
                \item The set of elliptic points $\mathcal{E}$ consists of those $X\in \mathbb{X}_0$ for which Definition \ref{BSC} is satisfied with blocks of type \ref{one} exclusively (complex conjugate pairs).
                \item The set of hyperbolic points $\mathcal{H}$ consists of those $X\in \mathbb{X}_0$ for which Definition \ref{BSC} is satisfied with blocks of type \ref{two} exclusively.
                \item The set of mixed points $\mathcal{E}\mathcal{H}$ consists of those $X\in \mathbb{X}_0$ for which Definition \ref{BSC} is satisfied with blocks of type \ref{one} and block \ref{two}, but no blocks of type \ref{three}.
                \item The set of glancing points $\mathcal{G}$ consists of those $X\in \mathbb{X}_0$ for which Definition \ref{BSC} is satisfied with at least one block of type \ref{three}.
            \end{itemize}
        \end{Definition}
        An interesting consequence of the block structure condition is given hereunder. 
        \begin{Proposition}\label{Continuation}
            The stable subspace $\mathbb{E}^{-}(X)$ defines a smooth vector bundle over $\mathbb{X}\cap\{\gamma>0\}$ that extends into a continuous vector bundle (again denoted by $\mathbb{E}^-(X)$) over $\mathbb{X}$ with the same rank.  
        \end{Proposition}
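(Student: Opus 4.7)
The strategy is to realise $\mathbb{E}^{-}(X)$ as the range of a matrix-valued function of $X$ that is manifestly smooth on $\mathbb{X}\cap\{\gamma>0\}$ and admits a continuous extension across $\{\gamma=0\}$ with locally constant rank. The tool that makes both pieces uniform is the Dunford--Riesz spectral projector, combined with the block structure condition to handle the boundary $\{\gamma=0\}$.

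First, on $\mathbb{X}\cap\{\gamma>0\}$, item \ref{hypG1} of Definition \ref{Hyperbolicity} tells us that $a(X)$ has no real eigenvalues, so its spectrum splits cleanly into two parts lying in the open upper and lower half planes. Locally I would pick a smooth closed contour $\Gamma$ in $\C\setminus\R$ encircling exactly the ``stable'' side and define
\begin{equation*}
\Pi^{-}(X)=\frac{1}{2\pi i}\oint_{\Gamma}(zI_n-a(X))^{-1}\,dz,
\end{equation*}
which projects onto $\mathbb{E}^{-}(X)$ and depends smoothly on $X$ as long as no eigenvalue crosses $\Gamma$. By compactness the contour can be chosen uniformly in a neighbourhood of any point, which gives the smooth vector bundle structure over $\mathbb{X}\cap\{\gamma>0\}$ and shows that the rank is locally constant there.

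To extend $\mathbb{E}^{-}$ across $\{\gamma=0\}$, I would fix $\underline{X}\in\mathbb{X}_{0}$ and invoke Proposition \ref{BSCProp} to conjugate $a(X)$ on a neighbourhood $\mathcal{V}$ to a block diagonal matrix $\mathrm{diag}(a_{1},\ldots,a_{\mathdutchcal{q}})$ via a smooth $\ee_{0}(X)$. Since the change of basis is a smooth bundle isomorphism, it suffices to treat each block separately. For blocks of type \ref{one} the spectrum stays in $\C\setminus\R$ throughout $\mathcal{V}$, so the contour integral of the previous paragraph is already valid up to and including $\gamma=0$ and gives a smooth extension. For scalar blocks of type \ref{two}, the Taylor expansion $\ima a_{j}(X)=\gamma\,\ima(i\partial_{\gamma}a_{j})(\underline{X})+o(\gamma)$ has a fixed sign for small $\gamma>0$ thanks to $i\partial_{\gamma}a_{j}(\underline{X})\in\R\setminus\{0\}$; consequently the one-dimensional eigenspace either lies in $\mathbb{E}^{-}$ on all of $\mathcal{V}\cap\{\gamma>0\}$ or on none of it, and the continuous extension to $\gamma=0$ is simply the corresponding limit eigenspace.

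The delicate case, and the main obstacle, is the glancing blocks of type \ref{three}. Here $a_{j}(\underline{X})$ is a single Jordan block of size $\nu_{j}$ with real eigenvalue $\mu_{j}$, so the eigenvalues of $a_{j}(X)$ are only Puiseux functions of the perturbation, not smooth ones. The plan is to write the characteristic polynomial in the form $(\xi-\mu_{j})^{\nu_{j}}+\gamma c(X)+\mathcal{O}(\gamma^{2},|X-\underline{X}|^{2})$ and use that the structural requirement on the lower-left entry of $i\partial_{\gamma}a_{j}$ forces $c(\underline{X})$ to be real and nonzero. A Puiseux expansion then distributes the $\nu_{j}$ perturbed eigenvalues on a rotated fan of $\nu_{j}$-th roots, so that a fixed number acquire positive imaginary part and the remainder acquire negative imaginary part, uniformly on $\mathcal{V}\cap\{\gamma>0\}$. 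The Riesz projector integrated over the stable half of the fan therefore has locally constant rank and converges, as $\gamma\downarrow 0$, to a well-defined subspace of the generalised eigenspace $\ker(a_{j}(\underline{X})-\mu_{j})^{\nu_{j}}$. Gluing the three local descriptions through $\ee_{0}$ produces a continuous extension of $\mathbb{E}^{-}$ to all of $\mathbb{X}$; its rank is locally constant by the block-by-block analysis and hence constant on each connected component, matching the value $p$ already fixed on $\{\gamma>0\}$.
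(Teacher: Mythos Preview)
The paper does not give its own proof of this proposition; it is stated as ``an interesting consequence of the block structure condition'' and left to the same references already cited for Proposition~\ref{BSCProp} (Chazarain--Piriou, Benzoni-Gavage--Serre, M\'etivier). Your outline is precisely the standard argument found in those sources: the Dunford--Riesz projector gives the smooth bundle structure on $\{\gamma>0\}$, and the block structure reduces the extension across $\{\gamma=0\}$ to the three model blocks, which you treat in turn. For types~\ref{one} and~\ref{two} your sketch is complete.

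The only place where the argument, as written, has a genuine gap is the glancing block~\ref{three}. Your Puiseux analysis is correct at the level of eigenvalues: the $\nu_j$-th roots of $i\gamma\kappa$ land on a rotated fan avoiding the real axis, so the stable/unstable counting is locally constant. What does not follow automatically is that the \emph{Riesz projector} converges as $\gamma\downarrow 0$. The contour separating stable from unstable eigenvalues must squeeze between points that are all collapsing onto $\mu_j$, so it cannot be held fixed, and the integrand $(zI-a_j(X))^{-1}$ blows up on that shrinking contour. One cannot simply pass to the limit under the integral sign. The standard cure (this is exactly the content of M\'etivier's argument, or of Ralston's earlier computation) is to perform a further smooth conjugation bringing $a_j(X)$ to a companion-matrix form governed by the coefficients of its characteristic polynomial; in that normal form the stable subspace can be written down explicitly as the span of Vandermonde-type vectors in the Puiseux roots, and one checks by hand that this span has a limit (a flag subspace of the Jordan block) as the roots coalesce. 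With that step supplied, your proof matches the literature.
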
 
         According to Definition \ref{Classification}, the hyperbolic region $\mathcal{H}$ comprises the elements $X\in \mathbb{X}$ such that $a(X)$ is diagonalizable with purely real eigenvalues, meaning that $\mathcal{H}$ is necessarily confined to the frequency boundary $\{\gamma=0\}$ since $a(X)$ is known to have no real eigenvalues when $\gamma>0$ (see Hersh's lemma). 
         
         We complement Proposition \ref{Continuation} with a definition and a result that we state without proof (see Lemma 3.1 in \cite{coulombel2010geometric}).
         \begin{Definition}
            A complex vector space is of real type if it possesses a basis consisting entirely of real vectors.
        \end{Definition}
        \begin{Proposition}\label{ERealType}
            Let $X \in \mathbb{X}_0$. If $X\in \mathcal{H}$, then $\mathbb{E}^{-}(X)$ is of real type.
        \end{Proposition}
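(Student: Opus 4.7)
My approach would be to work locally at $X_0 \in \mathcal{H}$ using the block structure, exploiting that on $\{\gamma = 0\}$ everything is real. By Proposition~\ref{BSCProp} together with Definition~\ref{Classification}, the block decomposition of $a$ at a hyperbolic point consists exclusively of $1 \times 1$ blocks of type~\ref{two}, so there is a smooth nonsingular $\ee_0(X)$ near $X_0$ with $\ee_0(X)^{-1}a(X)\ee_0(X) = \mathrm{diag}(a_1(X), \ldots, a_n(X))$, each scalar entry $a_k$ being real on $\{\gamma = 0\}$ and satisfying $\alpha_k \eqdef i\partial_\gamma a_k(X_0) \in \R \setminus \{0\}$. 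For $X$ close to $X_0$ with $\gamma > 0$ one has $a_k(X) = \mu_k - i\gamma \alpha_k + O(\gamma^2)$, so the sign of $\ima a_k(X)$ is fixed by the sign of $\alpha_k$; writing $S \subset \{1,\ldots,n\}$ for the indices corresponding to the stable sign (dictated by the $L^2(\R_+)$ decay requirement for $(D_d + a)u = 0$), the columns of $\ee_0(X)$ indexed by $S$ span $\mathbb{E}^-(X)$ for small $\gamma > 0$, and Proposition~\ref{Continuation} yields $\mathbb{E}^-(X_0) = \mathrm{span}\{\ee_0(X_0) e_k : k \in S\}$ by continuous extension.

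I would next pass to reality. Because $P_\gamma$ originates from a hyperbolic operator with real coefficients, its principal symbol depends holomorphically on $\tau - i\gamma$; hence on the slice $\{\gamma = 0\}$ both $a(X_0)$ and $R \eqdef i\partial_\gamma a(X_0) = \partial_\tau a(X_0)$ are real matrices. In particular $a(X_0)$ is real, diagonalisable over $\C$, and has real spectrum, so it is diagonalisable over $\R$: each eigenspace $V_\mu \eqdef \ker(a(X_0) - \mu I_n)$ is the complexification of $V_\mu^\R \eqdef V_\mu \cap \R^n$, and the corresponding spectral projector $P_\mu$ is real.

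The central step is to identify $\mathbb{E}^-(X_0) \cap V_\mu$ intrinsically via first-order perturbation theory applied to $a(X_0) - i\gamma R + O(\gamma^2)$, the family obtained by varying only $\gamma$. For the semisimple eigenvalue $\mu$ the sub-eigenvalues take the form $\mu - i\gamma \tilde{\alpha}_j + O(\gamma^2)$, where the $\tilde{\alpha}_j$ are the eigenvalues of the reduced operator $T \eqdef P_\mu R|_{V_\mu}$ and their leading-order eigenvectors lie inside $V_\mu$. Since $R$ and $P_\mu$ are real, $T$ is the complexification of a real operator $T^\R \colon V_\mu^\R \to V_\mu^\R$; and since the $\tilde{\alpha}_j$ are real and nonzero by type~\ref{two}, $T^\R$ is diagonalisable over $\R$. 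Writing $V_\mu^{\R,s} \subset V_\mu^\R$ for the sum of its eigenspaces of stable sign, one concludes $\mathbb{E}^-(X_0) \cap V_\mu = V_\mu^{\R,s} \otimes_\R \C$, whence $\mathbb{E}^-(X_0) = \bigoplus_\mu V_\mu^{\R,s} \otimes_\R \C$ admits a basis of real vectors.

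The main difficulty I anticipate concerns eigenvalues $\mu$ of $a(X_0)$ of multiplicity greater than one — several type-\ref{two} blocks sharing the same $\mu_k$. There the naive identification of $\mathbb{E}^-(X_0) \cap V_\mu$ with specific columns of $\ee_0(X_0)$ is basis dependent, and one must verify that the stable subspace inside $V_\mu$ is intrinsic. The perturbation argument resolves this cleanly: the sub-eigenvalues emanating from $\mu$ along the $\gamma$ direction and their limiting eigenvectors depend only on $T$, and the direction-independent continuous extension furnished by Proposition~\ref{Continuation} forces the canonical subspace $V_\mu^{\R,s} \otimes_\R \C$ to coincide with $\mathbb{E}^-(X_0) \cap V_\mu$.
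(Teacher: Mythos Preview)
The paper does not prove this proposition: it is stated without proof, with a pointer to Lemma~3.1 in \cite{coulombel2010geometric}. There is thus no paper proof to compare against.

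Your argument is correct under the hypothesis that $a(X_0)$ and $R=i\partial_\gamma a(X_0)$ are real matrices---which holds for the differential problem~\eqref{PureBVP} with real $A_j$, and is also the setting of the cited reference. The perturbation-theoretic identification of $\mathbb{E}^-(X_0)\cap V_\mu$ with the positive spectral subspace of the real reduced operator $T=P_\mu R|_{V_\mu}$ handles the multiplicity issue cleanly. One point to make explicit: you need $T$ diagonalisable over $\C$ (so that its real spectrum then forces diagonalisability over $\R$); this follows from the smooth diagonalisability of $a(X)$ near $X_0$ supplied by the block structure, via standard semisimple perturbation theory. In the general pseudodifferential framework of~\eqref{SystemPDO}, the reality of $a(X_0)$ does not follow from Definition~\ref{BSC} alone (which only asserts reality of the diagonalised entries, not of the conjugating matrix $\ee_0$), so your appeal to real coefficients and holomorphy in $\tau-i\gamma$ is a genuine structural assumption, consistent with the hypotheses in \cite{coulombel2010geometric}.
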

    \subsection{The \Lop condition.}   
        We open this section with a central concept for the forthcoming discussion.
        \begin{Definition}\label{LopCond}
            A boundary value problem $(P_{\gamma},B_{\gamma})$ satisfies the weak \Lop condition if for every $X \in \mathbb{Y}_S\cap \{\gamma>0\}$, 
            \begin{align*}
                 \mathbb{E}^{-}(X)\cap \ker b(X)=\{0\}.
            \end{align*}
        \end{Definition}
        The weak \Lop condition turns out to be necessary for the $H^s$ well-posedness of \eqref{SystemPDO}, albeit in general with loss of derivatives. As a matter of fact, such a loss of regularity can be avoided in the presence of a stronger statement: the uniform \Lop condition (see Theorem \ref{ExistenceUniquenessRegularity} below).
        \begin{Definition}\label{UnifLopCond}
            A boundary value problem $(P_{\gamma},B_{\gamma})$ satisfies the uniform \Lop condi-\\tion if for every $X \in \mathbb{Y}_S$,
            \begin{align}\label{LopEquation}
                 \mathbb{E}^{-}(X)\cap \ker b(X)=\{0\},
            \end{align}
            that is to say, equation \eqref{LopEquation} holds up to the frequency boundary \footnote{This extension relies on Proposition \ref{Continuation}.} $\{\gamma= 0\}$.
        \end{Definition}
        In practice, we do not directly verify Definitions \ref{LopCond} and \ref{UnifLopCond} when checking the \Lop condition for a given $(P_{\gamma},B_{\gamma})$, but use a special map defined as 
        \begin{equation}\label{LopDet}
            \underline{\Delta}(X)\eqdef \det\left(b(X)\textbf{e}_1(X),\,\cdots, b(X)\textbf{e}_p(X)\right),
        \end{equation}
        where $\textbf{e}(X)=\{\textbf{e}_1(X), \, \cdots, \textbf{e}_p(X)\}$ is a smooth, homogeneous basis for $\mathbb{E}^{-}(X)$. This function is called the \textit{\Lop determinant} and has the following properties:
        \begin{itemize}[label=$\circ$]
            \item for $\gamma>0$, $X\mapsto\underline{\Delta}(X)$ is smooth with respect to $(t,x)$, holomorphic in $\tau-i\gamma$ and real analytic in $\eta$,
            \item $\underline{\Delta}$ is homogeneous of degree $0$,
            \item $\underline{\Delta}(X)$ vanishes only at points $X\in \mathbb{X}$ where the \Lop condition is violated.
        \end{itemize}

        \begin{Theorem}[Theorem 6.10, \cite{Chazarain2011}]\label{ExistenceUniquenessRegularity}
            Consider the boundary value problem \eqref{SystemPDO} under Assumption \ref{Assumption1}. If the uniform \Lop condition is satisfied, it is possible to find a constant $\gamma_0\geq 1$ such that the following assertion holds for every $\gamma\geq \gamma_0$\,: if $f\in L^2_{\gamma}(\R_{+}^{1+d})$ and $g(t,x)\in L^2_{\gamma}(\R^{d})$, there is a unique $u(t,x)\in L^2_{\gamma}(\R_{+}^{1+d})$ of \eqref{SystemPDO} with $u|_{x_d=0}$ in $L^2_{\gamma}(\R^d)$ so that $u$ satisfies the energy estimate
            \begin{equation*}
                \gamma \Vert{u}\Vert^2_{0,\gamma}+\vert{u(0)}\vert^2_{0,\gamma}\leq c\left(\frac{1}{\gamma}\Vert{f}\Vert^2_{0,\gamma}+\vert{g}\vert^2_{0,\gamma}\right).
            \end{equation*}
            for some $C$ which only depends on $\gamma_0$. Moreover, there exists $\gamma_k\geq \gamma_0$ so that if $\gamma\geq \gamma_k$, it is true that for $f\in H^k_{\gamma}(\R_{+}^{1+d})$ and $g\in H^k_{\gamma}(\R^{d})$, there is a unique solution $u\in H^k_{\gamma}(\R_{+}^{1+d})$ of \eqref{SystemPDO} whose trace on $\R^d$ belongs to $H^k_{\gamma}(\R^d)$, and so that $u$ satisfies 
            \begin{equation*}
                \gamma \Vert{u}\Vert^2_{s,\gamma}+\vert{u(0)}\vert^2_{s,\gamma}\leq c\left(\frac{1}{\gamma}\Vert{f}\Vert^2_{s,\gamma}+\vert{g}\vert^2_{s,\gamma}\right).
            \end{equation*}
        \end{Theorem}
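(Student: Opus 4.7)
The plan is to construct a microlocal Kreiss symmetrizer $\Sigma_\gamma = \Op_\gamma(\sigma)$, where $\sigma(X)$ is a smooth, Hermitian, homogeneous degree-$0$ matrix symbol of size $n\times n$ on $\mathbb{X}_S$ satisfying the bulk and boundary positivity requirements
\begin{equation*}
    \ima(\sigma(X)\, a(X)) \geq c\,\gamma\, I_n \quad \text{on } \mathbb{X}_S, \qquad \sigma(X)+ C\, b(X)^* b(X) \geq \alpha\, I_n \quad \text{on } \mathbb{Y}_S,
\end{equation*}
and then to extract the energy estimate from an $x_d$-integration by parts combined with sharp G\aa{}rding at large $\gamma$.

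The construction of $\sigma$ is local near each $\underline{X}\in \mathbb{X}_S$ and relies on the block decomposition of Proposition \ref{BSCProp}. After conjugation by $\ee_0(X)$ it suffices to pick a symmetrizer $\sigma_k$ on each type of block: on elliptic blocks (type \ref{one}) one takes $\sigma_k = \pm K I_{\nu_k}$ with the sign of the imaginary part of the spectrum, giving bulk positivity automatically; on hyperbolic blocks (type \ref{two}) the nonvanishing real number $i\partial_\gamma a_k|_{\gamma=0}$ already delivers $\ima(\sigma_k a_k) \gtrsim \gamma$ with the correct constant sign of $\sigma_k$; on glancing blocks (type \ref{three}) one uses the Kreiss--M\'etivier Jordan recipe $\sigma_k = E_k + iK F_k$, where $E_k$ is Hermitian with signature adapted to the splitting into stable and unstable modes and $F_k$ is tuned so that the nonvanishing lower-left entry of $i\partial_\gamma a_k$ produces $\ima(\sigma_k a_k)\geq c\gamma I$ for $K$ large. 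At the boundary $\mathbb{Y}_S$, uniform \Lop (Definition \ref{UnifLopCond}) combined with the continuous extension of $\mathbb{E}^{-}(X)$ up to $\gamma=0$ from Proposition \ref{Continuation} shows that $\mathbb{E}^-(X)\cap \ker b(X)=\{0\}$ on a compact set, so choosing $E_k$ with sufficiently large negative contribution on $\mathbb{E}^-$ directions yields $\sigma + C\,b^*b \geq \alpha I$. A smooth partition of unity on the compact base $\mathbb{Y}_S$ glues the local pieces into a global symbol $\sigma$, the convex combination preserving both positivity properties.

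With $\Sigma_\gamma$ in hand I would apply it to $P_\gamma u = f$ and integrate by parts in $x_d$, producing
\begin{equation*}
    2\,\ima\langle \Sigma_\gamma P_\gamma u, u\rangle = \langle(\partial_d \Sigma_\gamma + 2\,\ima(\Sigma_\gamma \mathdutchcal{A}_\gamma))\, u, u\rangle + \langle i\, \Sigma_\gamma(0)\, u(0), u(0)\rangle.
\end{equation*}
Sharp G\aa{}rding applied to the bulk symbol bounds the first term from below by $c\gamma\Vert u\Vert_{0,\gamma}^2$ minus an order-$0$ commutator error that is absorbed at $\gamma$ large; the boundary term together with $C\langle B_\gamma u(0), B_\gamma u(0)\rangle$ is bounded below by $\alpha\vert u(0)\vert_{0,\gamma}^2$ by boundary G\aa{}rding. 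Estimating the left-hand side by Cauchy--Schwarz as $|2\ima\langle \Sigma_\gamma f, u\rangle|\leq \tfrac{1}{\gamma}\Vert f\Vert_{0,\gamma}^2 + C\gamma\Vert u\Vert_{0,\gamma}^2$ and absorbing the second term for $\gamma$ large enough yields the claimed $L^2$ estimate. Existence and uniqueness then follow from the usual duality argument applied to the analogous construction for the adjoint problem.

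For the $H^s$ statement I would apply $\Lambda_\gamma^s$ and repeat the argument on $\Lambda_\gamma^s u$: the commutator $[\Lambda_\gamma^s, \mathdutchcal{A}_\gamma]$ is of order $s$ and thus produces a contribution of the same order as $\Lambda_\gamma^s \mathdutchcal{A}_\gamma$, while the bulk gain $c\gamma\Vert u\Vert_{s,\gamma}^2$ dominates this error for $\gamma\geq\gamma_k$ sufficiently large. The main obstacle throughout is the glancing case: on type \ref{three} blocks the symmetrizer is intrinsically of indefinite signature, and the required positivity only emerges after delicately coupling the Jordan structure of $a_k$ with $\partial_\gamma a_k$; one must verify carefully that this construction survives both the partition of unity gluing and the order-$0$ commutator errors inherent to the pseudodifferential calculus.
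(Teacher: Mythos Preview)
The paper does not give its own proof of this statement: it is quoted verbatim as Theorem 6.10 from \cite{Chazarain2011} and used as a black box throughout (notably in Step 3 of the proof of Theorem \ref{main} via Definition \ref{SFS}, and again in Section \ref{wellposedness}). There is therefore nothing to compare your proposal against in this paper.

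That said, your sketch is the standard Kreiss--Sakamoto symmetrizer argument as presented in \cite{Chazarain2011} and \cite{benzoni-gavage_serre_2007}: block-structure reduction via Proposition \ref{BSCProp}, local symmetrizers on each block type, gluing by partition of unity, and the $x_d$-integration-by-parts energy identity with G\aa{}rding to handle lower-order commutators. One small caution: in your boundary term you wrote $\langle i\,\Sigma_\gamma(0)u(0),u(0)\rangle$, but the correct contribution from integrating $\partial_d\langle \Sigma_\gamma u,u\rangle$ is $-\langle \Sigma_\gamma(0)u(0),u(0)\rangle$ (real, not with an $i$), and the boundary positivity is then used in the form $\langle \Sigma_\gamma(0)v,v\rangle \geq \alpha|v|^2 - C|B_\gamma v|^2$ rather than $\sigma + Cb^*b\geq \alpha I$; also, for the glancing blocks the symmetrizer is usually taken of the form $E_k + KF_k$ with $F_k$ real symmetric (not $iKF_k$), since $\sigma$ must remain Hermitian. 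These are cosmetic slips in an otherwise accurate outline of the classical proof.
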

  
    \subsection{The $\mathcal{WR}$ class.}    
        We now proceed to define the $\WR$ class and examine its main properties.
        \begin{Definition}\label{WR}
            The boundary value problem $(P_{\gamma},B_{\gamma})$ is of class $\WR$ if the following conditions are met:
            \begin{enumerate}[label=$\circ$]
                \item The weak \Lop condition holds, 
                \item The level set $\underline{\Delta}^{-1}(0)$ is non-void and contained in the hyperbolic region $\mathcal{H}$. Moreover, $\partial_{\tau}\underline{\Delta}(X)\neq 0$ whenever $\underline{\Delta}(X)=0$.
            \end{enumerate}
        \end{Definition}
        We shall utilize an equivalent characterization of the the $\WR$ class that fits better our purposes.
        \begin{Proposition}\label{WR_2.2}
            The pair $(P_{\gamma},B_{\gamma})$ defines a $\WR$ boundary value problem if and only if:
            \begin{enumerate}[label=$\circ$]
                \item \label{WLC} for every $X\in \mathbb{Y}_S\cap\{\gamma>0\}$, it is true that $\mathbb{E}^{-}(X)\cap \ker b(X) =\{0\}$. In other words, the weak \Lop condition is fulfilled.
                \item \label{WR_condition} The critical set $\Gamma\eqdef\{X\in \mathbb{Y}_S: \mathbb{E}^{-}(X)\cap \ker b(X)\neq \{0\}\}$ is nonempty and included in the hyperbolic region $\mathcal{H}$. Furthermore, for every $\underline{X}\in \Gamma$, there exist a conic neighborhood $\mathcal{V}$ of $\underline{X}$, and mappings
                \begin{itemize}[label=$\triangleright$]
                    \item $\underline{e}_1, \, \cdots, \underline{e}_{p} \in  C^{\infty}(\mathcal{V},\C^{n})$, 
                    \item $\mathfrak{p}(X)\in C^{\infty}(\mathcal{V},\textrm{GL}_p(\C))$,
                    \item $\omega\in C^{\infty}(\mathcal{V},\R)$ 
                \end{itemize}
                such that, for every $X\in \mathcal{V}$, $\underline{\ee}^{-}(X)=\{\underline{e}_1(X), \, \cdots, \underline{e}_{p}(X)\}$ is a basis for $\mathbb{E}^{-}(X)$, 
                \begin{equation*}
                    b^{-}(X)\eqdef b(X)(\underline{e}_1(X),\,\cdots, \underline{e}_p(X))=\mathfrak{p}(X)\begin{pNiceMatrix}
                    \lambda^{-1}(\zeta)(\gamma+i\omega(X))&0&\cdots&0\\
                    0&1&\cdots&0 \\
                    \vdots&\vdots &\ddots &\vdots&\\
                    0& 0&\cdots &1\\
                \end{pNiceMatrix}.
                \end{equation*}
                and $\partial_{\tau}\omega(X)\neq0$ when $\omega(X)=0$.
            \end{enumerate}
        \end{Proposition}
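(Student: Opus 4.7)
The plan is to reduce the matrix factorization of $b^-(X)$ to a scalar Weierstrass-type preparation of the \Lop determinant $\underline{\Delta}(X)$. I would first dispose of the converse direction: assuming the prescribed form of $b^-$ on the conic neighborhood $\mathcal V$ of each $\underline{X}\in\Gamma$, a direct computation gives $\underline{\Delta}(X)=\det\mathfrak{p}(X)\cdot\lambda^{-1}(\zeta)(\gamma+i\omega(X))$, up to the smooth nonvanishing factor encoding the change of frame between the basis used in \eqref{LopDet} and the one produced by the proposition. Since $\omega$ is real-valued, the zero locus is $\{\gamma=0,\omega(X)=0\}$, which sits in $\mathcal H$ by the second bullet, and $\partial_\tau\underline{\Delta}(\underline{X})=\det\mathfrak{p}(\underline{X})\cdot\lambda^{-1}(\underline{\zeta})\cdot i\,\partial_\tau\omega(\underline{X})\neq0$, which recovers Definition \ref{WR}.

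For the direct implication, I would fix $\underline{X}\in\Gamma$ and use Proposition \ref{Continuation} to produce a smooth frame $\underline{e}_1(X),\ldots,\underline{e}_p(X)$ of $\mathbb{E}^-(X)$ on a conic neighborhood $\mathcal V$ of $\underline{X}$. Since $\partial_\tau\underline{\Delta}(\underline{X})\neq0$ forces $b^-(\underline{X})$ to have rank $p-1$ and hence $\mathbb{E}^-(\underline{X})\cap\ker b(\underline{X})$ to be one-dimensional (the standard $\WR$ hallmark), I would arrange $\underline{e}_1(\underline{X})$ to span this line, so that $b(\underline{X})\underline{e}_2(\underline{X}),\ldots,b(\underline{X})\underline{e}_p(\underline{X})$ remain linearly independent in $\C^p$. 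Picking a smooth nonvanishing section $r_1(X)$ of a smooth complement to $\mathrm{span}\{b(X)\underline{e}_j(X):j\geq2\}$, I would decompose
\begin{equation*}
    b(X)\underline{e}_1(X)=\mu(X)\,r_1(X)+\sum_{j=2}^{p}\nu_j(X)\,b(X)\underline{e}_j(X)
\end{equation*}
with smooth scalars $\mu$ and $\nu_j$. Replacing $\underline{e}_1$ by $\underline{e}_1-\sum_{j\geq2}\nu_j\underline{e}_j$ (still a frame of $\mathbb{E}^-$) and setting $\mathfrak{p}_0(X):=(r_1(X),b(X)\underline{e}_2(X),\ldots,b(X)\underline{e}_p(X))$ yields the preliminary factorization $b^-(X)=\mathfrak{p}_0(X)\,\mathrm{diag}(\mu(X),1,\ldots,1)$.

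The scalar $\mu(X)$ agrees with $\underline{\Delta}(X)$ up to the smooth unit $\det\mathfrak{p}_0(X)$, so it is holomorphic in $\zeta=\tau-i\gamma$, has a simple zero at $\underline{X}$ in the $\tau$-direction, and its zero set is contained in $\{\gamma=0\}\cap\mathcal H$. The holomorphic implicit function theorem in $\zeta$ then produces $\mu(X)=u(X)(\zeta-\zeta_0(X'))$ with $X'=(t,x,\eta)$, $u$ a smooth unit, and $\zeta_0$ smooth in $X'$. Because every zero of $\mu$ must occur at $\gamma=0$, I expect $\ima\zeta_0\equiv0$ in $\mathcal V$, in harmony with the real type of $\mathbb{E}^-$ on $\mathcal H$ (Proposition \ref{ERealType}). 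Setting $\omega(X):=\tau-\zeta_0(X')$ so that $\zeta-\zeta_0=-i(\gamma+i\omega)$ and absorbing the smooth unit $-iu(X)\lambda(\zeta)$ into the first column of $\mathfrak{p}_0$, i.e.\ letting $\mathfrak{p}(X):=\mathfrak{p}_0(X)\,\mathrm{diag}(-iu(X)\lambda(\zeta),1,\ldots,1)$, delivers the sought factorization $b^-(X)=\mathfrak{p}(X)\,\mathrm{diag}(\lambda^{-1}(\zeta)(\gamma+i\omega(X)),1,\ldots,1)$; the condition $\partial_\tau\underline{\Delta}(\underline{X})\neq0$ translates into $\partial_\tau\omega(\underline{X})\neq0$ on the critical locus.

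The hard part will be engineering the column reduction so that all the degeneracy of $b^-(\underline{X})$ is packaged into the single scalar entry $\mu(X)$, and then showing that the root $\zeta_0$ is real-valued. The first hurdle is overcome by the one-dimensional defect of $b^-(\underline{X})$, which lets the complement to $\mathrm{span}\{b(X)\underline{e}_j(X):j\geq2\}$ be chosen smoothly and keeps $\mu$ smooth with a simple zero; the second hurdle is cleared by combining the inclusion of the critical set in $\{\gamma=0\}\cap\mathcal H$ with Proposition \ref{ERealType}. The remaining verifications (smoothness and invertibility of $\mathfrak{p}$ throughout the absorptions, together with homogeneity in the conic neighborhood) are a matter of careful bookkeeping.
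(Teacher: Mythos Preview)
Your proposal is correct and follows essentially the same route as the paper: the paper (via Proposition~\ref{STBasis} and its Corollary) performs the same column reduction on $b^-$ to isolate the degeneracy in a single scalar entry, applies Weierstrass preparation/the implicit function theorem in $\rho=\tau-i\gamma$ to factor that entry as $(\rho-\nu(t,y,\eta))z_1(X)$, and then absorbs the unit into $\mathfrak{p}$ exactly as you do. The only cosmetic differences are that the paper finds the coefficients $d_j$ by solving $h(X)d(X)=b_1'(X)$ with the nonsingular $(p-1)\times(p-1)$ minor $h(X)$ rather than by choosing a complement, and it modifies the basis \emph{after} the factorization (in the Corollary) rather than before; also, the reality of the root $\nu$ is treated as immediate from the weak \Lop condition (zeros of $\underline{\Delta}$ lie in $\{\gamma=0\}$) without invoking Proposition~\ref{ERealType}.
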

        The latter equivalence corresponds to Proposition B.1 in \cite{coulombel2010geometric}, whose proof we complete by including the construction of $\underline{\ee}^{-}(X)$ following the analysis in \cite{sable1988existence} and \cite{ohkubo1975structures}. 
    
        \begin{Proposition}\label{STBasis} If $(P_{\gamma},B_{\gamma})$ belongs to the $\WR$ class, there exist a neighborhood $\mathcal{V}$ and two mappings $\mathfrak{p},c\in C^{\infty}(\mathcal{V}, GL_p(\C))$ homogeneous of degree $0$ in $\zeta=(\tau-i\gamma,\eta)\simeq(\tau,\eta,\gamma)$, such that the factorization 
        \begin{equation}\label{B_minus}
            b^{-}(X)=\mathfrak{p}(X)\begin{pNiceMatrix}
                \lambda^{-1}(\zeta)\left(\gamma+i\omega(X)\right)&0&\cdots &0\\
                0&1&\cdots &0\\
                \vdots & \vdots &\ddots &\vdots \\
                0&0&\cdots &1\\
                \end{pNiceMatrix}c^{-1}(X)
        \end{equation}
        holds true.
        \end{Proposition}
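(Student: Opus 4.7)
The idea is to obtain \eqref{B_minus} as a change-of-basis rewrite of the factorization already supplied by Proposition \ref{WR_2.2}. Concretely, both the frame $\textbf{e}(X)=(\textbf{e}_1(X),\ldots,\textbf{e}_p(X))$ entering the Lopatinski determinant \eqref{LopDet} and the frame $\underline{\ee}^{-}(X)=(\underline{e}_1(X),\ldots,\underline{e}_p(X))$ produced by Proposition \ref{WR_2.2} are smooth bases for the same stable subspace $\mathbb{E}^{-}(X)$. On their common conic domain there is therefore a unique matrix-valued map $c(X)\in GL_p(\C)$ such that $\underline{\ee}^{-}(X)=\textbf{e}(X)c(X)$; its entries are just the coordinates of each $\underline{e}_j(X)$ in the basis $\textbf{e}(X)$. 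Smoothness of $c$ is inherited from that of the two frames, and invertibility is automatic since both span the same $p$-dimensional space.

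With this $c$ in hand, the plan is to insert $\underline{\ee}^{-}(X)=\textbf{e}(X)c(X)$ into the identity
\begin{equation*}
b(X)\underline{\ee}^{-}(X)=\mathfrak{p}(X)\,\mathrm{diag}\bigl(\lambda^{-1}(\zeta)(\gamma+i\omega(X)),\,1,\ldots,1\bigr)
\end{equation*}
from Proposition \ref{WR_2.2} and rearrange to
\begin{equation*}
b(X)\textbf{e}(X)=\mathfrak{p}(X)\,\mathrm{diag}\bigl(\lambda^{-1}(\zeta)(\gamma+i\omega(X)),\,1,\ldots,1\bigr)\,c^{-1}(X),
\end{equation*}
which is exactly \eqref{B_minus} once $b^{-}(X)$ on the left is interpreted as $b(X)\textbf{e}(X)$, i.e.\ as the natural matrix whose determinant is the Lopatinski determinant $\underline{\Delta}(X)$.

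The point that will require some care is the degree-$0$ homogeneity of $c$ and $\mathfrak{p}$ in $\zeta=(\tau-i\gamma,\eta)$. The frame $\textbf{e}(X)$ in \eqref{LopDet} is homogeneous by construction; for $\underline{\ee}^{-}$ the cleanest route is to construct the frame first on $\mathbb{X}_S\cap\mathcal{V}$ along the lines of \cite{sable1988existence,ohkubo1975structures} and then extend by the rule $\underline{e}_j(\rho X)=\underline{e}_j(X)$ for $\rho>0$. The same convention is imposed on $\mathfrak{p}$ and $\omega$, which is consistent because the only non-degree-$0$ piece of the diagonal factor is isolated in the scalar $\lambda^{-1}(\zeta)$, whose homogeneity is fixed once and for all. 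Once both frames and all auxiliary maps are homogeneous of degree $0$, the change-of-basis matrix $c(X)$ inherits this property, and \eqref{B_minus} holds throughout the conic neighborhood $\mathcal{V}$. This homogeneity bookkeeping is the only non-routine step; everything else is a direct rewriting of Proposition \ref{WR_2.2}.
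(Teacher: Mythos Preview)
Your argument is logically clean if Proposition \ref{WR_2.2} is taken as a black box, but within this paper that is circular. The sentence immediately preceding Proposition \ref{STBasis} states that Proposition \ref{WR_2.2} is quoted from \cite{coulombel2010geometric} and that its proof is \emph{completed} here ``by including the construction of $\underline{\ee}^{-}(X)$''. In other words, the paper's proof of Proposition \ref{STBasis} \emph{is} the missing construction of the frame $\underline{\ee}^{-}$ (and of $\mathfrak{p}$, $\omega$) that Proposition \ref{WR_2.2} merely asserts to exist. Your proposal reverses this dependency: you assume $\underline{\ee}^{-}$ and $\mathfrak{p}$ are already in hand and then read off $c$ as an abstract change-of-basis matrix. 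When you write ``construct the frame first \ldots\ along the lines of \cite{sable1988existence,ohkubo1975structures}'', you are deferring exactly the work the paper carries out.

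The paper's proof is genuinely constructive and starts from Definition \ref{WR}, not from Proposition \ref{WR_2.2}. Given an arbitrary smooth basis $\ee(X)$ of $\mathbb{E}^{-}(X)$, it uses the rank condition $\mathrm{rank}\,b^{-}(X)\geq p-1$ to isolate a nonsingular $(p-1)\times(p-1)$ minor $h(X)$, solves $h(X)d(X)=b_1'(X)$ for explicit coefficients $d_2,\ldots,d_p$, applies the Weierstrass preparation theorem to the single surviving scalar entry to produce $\omega$, and obtains $c(X)$ as an explicit lower-triangular matrix with first column $(1,-d_2,\ldots,-d_p)^{T}$. Beyond avoiding circularity, this explicit form is not cosmetic: it is reused verbatim in the proof of Theorem \ref{SymWR}, where the number $\mathdutchcal{s}$ of nonzero $d_j$'s determines the dimension of the Krylov space $K_{\ell}(X)$, the block decomposition $c(X)=\mathrm{diag}(c_{\mathdutchcal{s}}(X),I_{p-\mathdutchcal{s}})$ is needed for the factorization leading to \eqref{BoundaryRelation}, and the $d_j$'s themselves define the adapted basis $\tilde{\ee}_0$. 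An abstract change-of-basis matrix would not supply any of this structure.
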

        \begin{proof}
            Let 
            \begin{equation*}
                \Gamma=\{X\in \mathbb{Y}_S: \mathbb{E}^{-}(X)\cap \ker b(X)\neq \{0\}\}=\{X\in \mathbb{Y}_S: \underline{\Delta}(X)=0\},
            \end{equation*}
            and suppose that $\underline{X}\in \Gamma$ is such that $\underline{\Delta}(\underline{X})=0$ and $\partial_{\tau}\underline{\Delta}(\underline{X}) \neq 0$. Since $\underline{\Delta}(X)$ is holomorphic in $\rho\eqdef\tau-i\gamma$ and homogeneous of degree $0$ with respect to $\zeta$, the implicit function theorem characterizes the zeros of $\underline{\Delta}(X)$ in a conic neighborhood $\mathcal{V}$ of $\underline{X}$ through an equation $\rho=\nu(t, y, \eta)$, where $\nu(t, y, \eta)$ is  a smooth, homogeneous function of degree 1 in $\eta$. If $\ee(X)=\{e_1(X),\,\cdots,e_p(X)\}$ is any basis for $\mathbb{E}^{-}(X)$, set 
            \begin{equation*}
                b^{-}(X)\eqdef (b(X)e_1(X),\,\cdots, b(X)e_p(X))
            \end{equation*}
            and denote each $b(X)e_i(X)$ by $b^{-}_i(X)$ in the sequel. We claim that $b^{-}(X)$ has a nonsingular cofactor matrix $h(X)$ of order $p-1$ on $\mathcal{V}$. Indeed, shrinking $\mathcal{V}$ if necessary, $\partial_{\tau}\underline{\Delta}(\underline{X}) \neq 0$ guarantees that  
            \begin{equation}\label{rank}
                \textrm{rank}\,{{b}^{-}(X)}\geq  p-1 \quad \mathrm{on} \quad \mathcal{V},
            \end{equation}
            the equality being realized only when $X\in \mathcal{V}\cap \Gamma$. In practice, there is no loss of generality in assuming that $h(X)$ is the resulting block after deleting the first column and the first row of $b^{-}(X)$. Let $b'_{1}(X)$ be the vector $b^{-}_1(X)$ without its first entry. In such case, for every $X\in \mathcal{V}$, the linear system $h(X)d(X)=b'_{1}(X)$ possesses a unique solution
            \begin{equation*}
                \begin{pNiceMatrix}
                    d_2(X)\\
                    \vdots\\
                    d_p(X)
                \end{pNiceMatrix}=h^{-1}(X){b}'_{1}(X),
            \end{equation*}
            whose entries $d_2(X),\,\cdots, d_p(X)$ are smooth in $(t,x)$ and homogeneous of degree $0$ in $(\rho, \eta)$. Suppose now that
            \begin{equation*}
                k(X)= b^{-}_1(X)-\sum_{i=2}^{p}{d_i(X)b^{-}_i(X)}.
            \end{equation*}
            Except for the first component of $k(X)$, which only vanishes when $X\in\{\rho=\nu(t, y, \eta)\}$, all other entries are identically zero by construction.
            Thus, owing to the Weierstrass preparation theorem, there exists a nonvanishing function $z_1(X)\equiv z(t,y,\rho,\eta)$ in $\mathcal{V}$, analytic with respect to $\rho$ and homogeneous of degree $-1$ in $(\rho,\eta)$, such that
            \begin{equation*}
                k_1(X)=(\rho-\nu(t, y, \eta))z_1(X) \quad \textrm{for} \quad X\in\mathcal{V}.
            \end{equation*}
            As a result, if we put
            \begin{equation*}
                z(X)\eqdef\begin{pNiceMatrix}
                    z_1(X)\\
                    \vdots\\
                    z_p(X)
                \end{pNiceMatrix},
            \end{equation*}
            with $z_2(X)=\cdots=z_p(X)=0$ along $\mathcal{V}$, we can write $k(X)=(\rho-\nu(t, y, \eta))z(X)$ and with it, 
            \begin{equation*}
                b^{-}_1(X)=(\rho-\nu(t, y, \eta))z(X)+\sum_{i=2}^{p}{d_i(X)b^{-}_i(X)}.
            \end{equation*}
            Let
            \begin{equation}
                c(X)=\begin{pNiceMatrix}
                    1&0&\cdots &0\\
                    -d_2(X)&1&\cdots &0\\
                    \vdots & \vdots &\ddots &\vdots \\
                    -d_{p}(X)&0&\cdots &1\\
                \end{pNiceMatrix}.
            \end{equation} 
            Observe that $c(X)$ is homogeneous of order $0$ in $(\rho,\eta)$ and that
            \begin{align*}
                b^{-}(X)&=\begin{pNiceMatrix}
                            b^{-}_1(X) &b^{-}_2(X)&\cdots& b^{-}_p(X)
                            \end{pNiceMatrix}\\&=\begin{pNiceMatrix}
                            (\rho-\nu(t, y, \zeta))z(X) &b^{-}_2(X)&\cdots& b^{-}_p(X)
                            \end{pNiceMatrix}c^{-1}(X),
            \end{align*}
            after a straightforward computation. Notably, the latter amounts to writing in matrix notation 
            \begin{align*}
                b^{-}(X)&=\mathfrak{p}(X)\begin{pNiceMatrix}
                    \lambda^{-1}(\zeta)i(\rho-\nu(t, y, \eta))&0&\cdots &0\\
                    0&1&\cdots &0\\
                    \vdots & \vdots &\ddots &\vdots \\
                    0&0&\cdots &1\\
                \end{pNiceMatrix}c^{-1}(X)\\&=\mathfrak{p}(X)\begin{pNiceMatrix}
                    \lambda^{-1}(\zeta)(\gamma+i(\tau-\nu(t, y, \eta)))&0&\cdots &0\\
                    0&1&\cdots &0\\
                    \vdots & \vdots &\ddots &\vdots \\
                    0&0&\cdots &1\\
                \end{pNiceMatrix}c^{-1}(X),
            \end{align*}
            where
            \begin{equation*}
                \mathfrak{p}(X)\eqdef \begin{pNiceMatrix}
                            -iz(X)\lambda(\zeta) &b^{-}_2(X)&\cdots& b^{-}_p(X)
                            \end{pNiceMatrix}
            \end{equation*}
            is homogeneous of degree $0$ in $\zeta$ and nonsingular. Lastly, setting $\omega(t,y,\tau,\eta) \eqdef \tau-\nu(t, y, \eta)$, it is easily seen that $\partial_{\tau}\omega(t,y,\tau,\eta)\neq 0$ and that 
            \begin{align}\label{Kerb}
                b^{-}(X)&=\mathfrak{p}(X)\begin{pNiceMatrix}
                    \lambda^{-1}(\zeta)(\gamma+i\omega(t,y,\tau,\eta)))&0&\cdots &0\\
                    0&1&\cdots &0\\
                    \vdots & \vdots &\ddots &\vdots \\
                    0&0&\cdots &1\\
                \end{pNiceMatrix}c^{-1}(X).
            \end{align}
        \end{proof}
        \begin{Corollary}
            Let $\omega(X)\equiv\omega(t,y,\tau,\eta)$ and $\mat{\Delta}(X)\eqdef(\gamma+i\omega(X))/\lambda(\zeta)$. Under the assumptions of Proposition \ref{STBasis}, there is a basis 
            \begin{equation*}
                \underline{\ee}^{-}(X)=\{\underline{e}_1(X),\,\cdots,\underline{e}_p(X)\},
            \end{equation*}
            for which  
            \begin{equation}
                b^{-}(X)=\mathfrak{p}(X)\begin{pNiceMatrix}
                    \mat{\Delta}(X)&0&\cdots &0\\
                    0&1&\cdots &0\\
                    \vdots & \vdots &\ddots &\vdots \\
                    0&0&\cdots &1\\
                    \end{pNiceMatrix}.
            \end{equation}
        \end{Corollary}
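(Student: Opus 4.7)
The plan is to realize the corollary as a purely cosmetic consequence of Proposition \ref{STBasis}: absorb the factor $c^{-1}(X)$ on the right of the factorization of $b^{-}(X)$ into a change of basis for $\mathbb{E}^{-}(X)$. Concretely, if $\ee^{-}(X)=\{e_1(X),\ldots,e_p(X)\}$ denotes the basis of $\mathbb{E}^{-}(X)$ used in the statement and proof of Proposition \ref{STBasis}, so that $b^{-}(X)=b(X)(e_1(X),\ldots,e_p(X))$, the candidate new basis is obtained by right-multiplying the $n\times p$ matrix of basis vectors by $c(X)$, namely
\begin{equation*}
    \bigl(\underline{e}_1(X),\ldots,\underline{e}_p(X)\bigr)\eqdef \bigl(e_1(X),\ldots,e_p(X)\bigr)\,c(X).
\end{equation*}

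First I would check that $\underline{\ee}^{-}(X)$ is indeed a legitimate basis with the desired regularity: since $c(X)\in C^{\infty}(\mathcal{V},\textrm{GL}_p(\C))$ is homogeneous of degree $0$ in $\zeta$, the columns $\underline{e}_j(X)$ are smooth homogeneous of degree $0$ linear combinations of $e_1(X),\ldots,e_p(X)$ and span the same $p$-dimensional subspace $\mathbb{E}^{-}(X)$ for every $X\in\mathcal{V}$. The triangular structure of $c(X)$ moreover gives explicit formulae $\underline{e}_1=e_1-\sum_{i=2}^{p}d_i\,e_i$ and $\underline{e}_j=e_j$ for $j\geq 2$, consistent with the construction in the proof of Proposition \ref{STBasis}.

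Second, using linearity of $b(X)$ and associativity of matrix multiplication, one gets
\begin{equation*}
    b(X)\bigl(\underline{e}_1(X),\ldots,\underline{e}_p(X)\bigr) = \bigl[b(X)(e_1(X),\ldots,e_p(X))\bigr]\,c(X)=b^{-}(X)\,c(X).
\end{equation*}
Plugging the factorization \eqref{B_minus} into the right-hand side, the $c^{-1}(X)$ and $c(X)$ cancel and one is left with
\begin{equation*}
    \mathfrak{p}(X)\,\mathrm{diag}\bigl(\lambda^{-1}(\zeta)(\gamma+i\omega(X)),\,1,\ldots,1\bigr)=\mathfrak{p}(X)\,\mathrm{diag}\bigl(\mat{\Delta}(X),1,\ldots,1\bigr),
\end{equation*}
which is precisely the claimed identity once one recalls the definition $\mat{\Delta}(X)=(\gamma+i\omega(X))/\lambda(\zeta)$.

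There is no serious obstacle here; the entire content of the corollary is the remark that the right factor $c^{-1}(X)$ appearing in Proposition \ref{STBasis} can be eliminated by changing the basis of $\mathbb{E}^{-}(X)$, and that the resulting new basis inherits smoothness and $0$-homogeneity from $c(X)$. The one point to double-check is that the conic neighborhood $\mathcal{V}$ and all homogeneity properties are preserved, which is automatic since both $\ee^{-}(X)$ and $c(X)$ enjoy these properties on $\mathcal{V}$.
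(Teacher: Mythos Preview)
Your proposal is correct and is essentially the same argument as the paper's: the paper simply records the explicit change of basis $\underline{e}_1=e_1-\sum_{i=2}^{p}d_i e_i$, $\underline{e}_j=e_j$ for $j\geq 2$ (which is exactly your right-multiplication by $c(X)$) and observes that with this choice the factor $c^{-1}(X)$ in \eqref{B_minus} disappears. Your write-up is more detailed but follows the identical idea.
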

        \begin{proof}
           Notice that \eqref{B_minus} attains its simplest form when choosing 
            \begin{align}\label{d-coefficients}
                \underline{e}_1(X)&\eqdef e_1(X)-d_2(X)e_2(X)-\cdots-d_p(X)e_p(X),\\
                \nonumber\underline{e}_2(X)&\eqdef e_2(X),\\
                &\nonumber\phantom{=}\vdots\\
                \nonumber\underline{e}_p(X)&\eqdef e_p(X),
            \end{align}
            as a basis for $\mathbb{E}^{-}(X)$.
        \end{proof}
    
        \begin{Remark}
            The one-dimensional subspace spanned by $\underline{e}_1(X)$ is special, as it points in the direction in which the \Lop condition degenerates when $X\in\Gamma$. In what follows, we shall write $\ell(X)$ to denote this subspace and refer to it as the \textit{critical direction}.
        \end{Remark}
    
\section{The constant coefficients case}\label{ConstantCoefficientCase}
    \subsection{Construction of a $\WR$ symmetrizer and a priori estimates for the model problem.} 
        We first investigate the boundary value problem
        \begin{equation}\label{SystemPDOConstant}
            \left\{\begin{aligned}
                 Pu(t,x)\eqdef\left(D_{d}+\mathdutchcal{A}(D_t,D_y)\right)u(t,x)&=f(t,x) &\quad &\quad (t,x)\in \R^{1+d}_{+},\\
                 Bu|_{x_d=0}(t,y)&=g(t,y) &\quad &\quad (t,y)\in \R^{d},
            \end{aligned}\right.
        \end{equation}
        subject to Assumptions \ref{Assumption1} and \ref{Assumption2} (when nontrivial), where $\mathdutchcal{A}(D_t,D_y)$ is a first-order differential operator with symbol $a(\zeta)$. We introduce the notions of a \Lop family of operators and a $\WR$ symmetrizer.
        
        \begin{Definition}\label{LopMultiplier}
            Let $\delta(\zeta)\in C^{\infty}(\Xi, \mathcal{M}_{n\times n}(\C))$ be such that
            \begin{enumerate}[label=\normalfont(\roman*)]
                \item $\delta(\zeta)$ is homogeneous of degree $0$ in $\zeta$,
                \item \label{kerTrivial}  If $\zeta\in S^d$, $\ker{\delta(\zeta)}$ is trivial provided that $\gamma>0$,
                \item when $\zeta\in \Gamma$, $\ker{\delta(\zeta)}$ is nontrivial and $\ell(\zeta)\subseteq \ker{\delta(\zeta)}$.
            \end{enumerate}
            We shall call $\Delta_{\gamma}\eqdef \mathrm{Op}_{\gamma}(\delta)\in \mathrm{OPS}^{0}(\R^{1+d}_{+}\times [1, +\infty))$ a \Lop family of operators.
        \end{Definition}
        \begin{Definition}\label{FS_C}
           Let 
           \begin{equation*}
               L^2_{\Delta}=\{v \in \mathcal{S}'(\R^{1+d}_+,\,\R^n): \Delta_{\gamma} v \in L_{\gamma}^2(\R^{1+d}_+,\,\C^n)\}.
           \end{equation*}
           A $\WR$ symmetrizer for Problem \ref{SystemPDOConstant} is a family of pseudodifferential operators $\Sigma_{\gamma}\in \mathrm{OPS}^{0}(\R^{1+d}_{+}\times [1, +\infty))$ so that
            \begin{enumerate}[label=\normalfont(\roman*)]
                \item \label{FS0} $\Sigma_{\gamma}$ is Hermitian, 
                \item \label{FS1} for every $v_1,v_2\in L^2_{\Delta}$, there is a positive constant $C$ satisfying
                    \begin{equation*}
                        \langle{\Sigma_{\gamma}v_1,v_2}\rangle\leq C\vert{\Delta_{\gamma}v_1}\vert\vert{\Delta_{\gamma}v_2}\vert,
                    \end{equation*}
                \item  \label{FS2} there exists a positive constant $c$ such that
                    \begin{equation*}
                        \ima{(\Sigma_{\gamma}\mathdutchcal{A}v,v)}\geq c\gamma\vert{\Delta_{\gamma}v}\vert^2,
                    \end{equation*}
                    for each $v\in L^2_{\Delta}$,
                \item \label{FS3} there are positive constants $\alpha$ and $\beta$, together with a family of pseudodifferential operators $Q_{\gamma}\in\mathrm{OPS}^{0}(\R^{d}_{+}\times [1, +\infty))$ such that
                  \begin{equation*}
                        \langle{\Sigma_{\gamma}v(0),v(0)}\rangle\geq \alpha\vert{\Delta_{\gamma}v(0)}\vert^2-\beta\vert{Q_{\gamma}Bv(0)}\vert^2.
                    \end{equation*}
            \end{enumerate}
        \end{Definition}
            
        If a $\WR$ symmetrizer exists, we expect the symbol $\sigma(\zeta)$ of $\Sigma_{\gamma}$ to be somewhat degenerate on the critical set $\Gamma$. More precisely, we have
        \begin{Proposition}\label{KrilovDegeneracy}
            If $\underline{\zeta}\in \Gamma$, $v\mapsto\langle{\sigma(\underline{\zeta})v,v}\rangle$ vanishes on the Krylov space of 
            \begin{equation*}
                \ell(\underline{\zeta})= \mathbb{E}^{-}(\underline{\zeta})\cap\ker{b(\underline{\zeta})}
            \end{equation*}
            with respect to $\mathdutchcal{A}_{\gamma}$, that is to say, on the smallest invariant subspace of $a(\underline{\zeta})$ containing $\ell(\underline{\zeta})$. 
        \end{Proposition}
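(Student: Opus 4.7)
The plan is to identify $\ker \sigma(\underline{\zeta})$ as an $a(\underline{\zeta})$-invariant subspace of $\mathbb{C}^n$ containing $\ell(\underline{\zeta})$; since the Krylov space of $\ell(\underline{\zeta})$ under $a(\underline{\zeta})$ is by definition the smallest such subspace, it will automatically lie inside $\ker \sigma(\underline{\zeta})$, where the quadratic form $v\mapsto \langle \sigma(\underline{\zeta}) v, v\rangle$ is trivially zero. Two ingredients go into this argument: pointwise versions of properties \ref{FS1} and \ref{FS2} of Definition \ref{FS_C} at $\underline{\zeta}$, and a short perturbation argument that exploits the Hermitian character of $\sigma(\underline{\zeta})$.

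For the first ingredient, I reduce \ref{FS1} and \ref{FS2} to symbol-level inequalities. In the constant-coefficient setting of Section \ref{ConstantCoefficientCase}, both are integrated inequalities on the Fourier side. Plugging in test functions of the form $\hat v_i(\zeta) = w_i \chi_\epsilon(\zeta)$, with $w_i \in \mathbb{C}^n$ fixed and $\chi_\epsilon$ an $L^2$-bump concentrating near a point on the ray through $\underline{\zeta}$, and exploiting the degree-zero homogeneity of $\sigma, \delta$ together with the continuity of the symbols up to the frequency boundary $\{\gamma = 0\}$, I extract the pointwise estimates
\begin{equation*}
    |\langle \sigma(\underline{\zeta}) w_1, w_2\rangle| \leq C\,|\delta(\underline{\zeta}) w_1|\, |\delta(\underline{\zeta}) w_2|, \qquad \mathrm{Im}\langle \sigma(\underline{\zeta}) a(\underline{\zeta}) w, w\rangle \geq 0,
\end{equation*}
for every $w_1, w_2, w \in \mathbb{C}^n$. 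The $c\gamma |\delta w|^2$ term vanishes in the second inequality because $\underline{\zeta}\in \Gamma \subseteq \mathcal{H} \subseteq \{\gamma = 0\}$. The first estimate then forces $\ker \delta(\underline{\zeta}) \subseteq \ker \sigma(\underline{\zeta})$: if $\delta(\underline{\zeta}) w_1 = 0$, then $\langle \sigma(\underline{\zeta}) w_1, w_2\rangle = 0$ for every $w_2$, hence $\sigma(\underline{\zeta}) w_1 = 0$. By Definition \ref{LopMultiplier}(iii) this gives in particular $\ell(\underline{\zeta}) \subseteq \ker \sigma(\underline{\zeta})$.

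For the second ingredient, let $v \in \ker \sigma(\underline{\zeta})$ and let $w \in \mathbb{C}^n$ be arbitrary. Hermiticity of $\sigma(\underline{\zeta})$ yields $\langle \sigma a v, v\rangle = \langle a v, \sigma v\rangle = 0$ and $\langle \sigma a w, v\rangle = \langle a w, \sigma v\rangle = 0$, so evaluating the second pointwise inequality above at $v + tw$, $t \in \mathbb{R}$, collapses it to
\begin{equation*}
    0 \leq \mathrm{Im}\langle \sigma(\underline{\zeta}) a(\underline{\zeta})(v + tw), v + tw\rangle = t\, \mathrm{Im}\langle \sigma(\underline{\zeta}) a(\underline{\zeta}) v, w\rangle + t^2\, \mathrm{Im}\langle \sigma(\underline{\zeta}) a(\underline{\zeta}) w, w\rangle.
\end{equation*}
A real quadratic in $t$ with nonnegative leading coefficient that remains nonnegative for every $t\in \mathbb{R}$ must have vanishing linear term, so $\mathrm{Im}\langle \sigma(\underline{\zeta}) a(\underline{\zeta}) v, w\rangle = 0$; replacing $w$ by $iw$ kills the real part too, and therefore $\sigma(\underline{\zeta}) a(\underline{\zeta}) v = 0$, i.e.\ $a(\underline{\zeta}) v \in \ker \sigma(\underline{\zeta})$. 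Iterating, $a(\underline{\zeta})^k v \in \ker \sigma(\underline{\zeta})$ for every $k \geq 0$, and applying this to $v \in \ell(\underline{\zeta})$ places the entire Krylov space inside $\ker \sigma(\underline{\zeta})$.

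The only delicate point is really the first step: upgrading the integrated operator inequalities of Definition \ref{FS_C} to pointwise symbol inequalities at a point of $\{\gamma = 0\}$, where the operator statement in \ref{FS2} is vacuous. In the constant-coefficient model this boils down to a Plancherel localization coupled with the homogeneity and continuity of $\sigma$, $\delta$, and $a$ up to the frequency boundary; once that is in place, the rest is a short linear-algebra argument.
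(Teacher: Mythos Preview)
Your argument is correct and takes a genuinely different route from the paper's. The paper proceeds dynamically: it considers the flow $v(t)=e^{-ia(\underline{\zeta})t}u$ with $u\in\ell(\underline{\zeta})$, first uses \ref{FS2} together with the exponential decay on $\mathbb{E}^{-}$ to show that $\langle\sigma(\zeta)\,\cdot\,,\,\cdot\,\rangle$ is negative (semi)definite on $\mathbb{E}^{-}$, and then combines this with \ref{FS3} to squeeze $\langle\sigma(\underline{\zeta})v(t),v(t)\rangle$ between $0$ and $0$; since the orbit $\{v(t)\}$ spans the Krylov space, the quadratic form vanishes there. Your approach is purely algebraic: you use \ref{FS1} (not \ref{FS3}) to place $\ell(\underline{\zeta})$ inside $\ker\sigma(\underline{\zeta})$, and then a one-parameter perturbation of \ref{FS2} at $\gamma=0$ to show that $\ker\sigma(\underline{\zeta})$ is $a(\underline{\zeta})$-invariant. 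This is shorter, avoids the ODE flow and the boundary property \ref{FS3} altogether, and in fact yields the slightly stronger conclusion that the Krylov space lies in $\ker\sigma(\underline{\zeta})$, not merely in the zero set of the associated quadratic form. The paper's route, on the other hand, makes the geometric picture (propagation along the invariant subspace $\mathbb{E}^{-}$) more visible and uses exactly the ingredients that later reappear in the variable-coefficient construction.
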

        \begin{proof}
            We prove initially that $v\mapsto\langle{\sigma(\zeta)v,v}\rangle$ restricted to $\mathbb{E}^{-}(\zeta)$ is positive definite for $\gamma >0$.  To do so,  let $u\in \mathbb{E}^{-}(\zeta)$ be such that $u\neq 0$ and consider the initial value problem
            \begin{align*}
                \left\{\begin{aligned}
         		       D_s v+a(\zeta)v&=0,\\
         		       v(0)&=u, \\
    		    \end{aligned}\right.
            \end{align*}  
            whose solution is well known and equal to 
            \begin{equation}\label{solution}
                v(s)=e^{-ia(\zeta)s}u
            \end{equation}
            in the sense of matrices. Then, 
            \begin{align}\label{KryIVP}
                \nonumber\frac{d}{ds}{\langle{\sigma(\zeta) v,v}\rangle}=\langle{\sigma(\zeta)\partial_{s}v,v}\rangle+\langle{\sigma(\zeta) v,\partial_{s}v}\rangle&=i\langle{\sigma(\zeta)D_{s}v,v}\rangle-i\langle{\sigma(\zeta) v,D_{s}v}\rangle\\\nonumber&=-2\ima{\langle{D_sa(\zeta)v,v}\rangle}\\\nonumber&=2\ima{\langle{\sigma(\zeta) a(\zeta)v,v}}\rangle\\&\geq 2c\gamma\vert{\delta(\zeta)v}\vert^2>0.
            \end{align}
            The first inequality in \eqref{KryIVP} is due to \ref{FS2} in Definition \ref{FS_C} and Plancherel's theorem, while the second one is due to Condition \ref{kerTrivial} in Definition \ref{LopMultiplier}. Since $u\in \mathbb{E}^{-}(\zeta)$, $v\to 0$ decreases exponentially fast as $s\to \infty$ and hence, when integrating over $\R^{+}$, 
            \begin{equation*}
                -\langle{\sigma(\zeta) v(0),v(0)}\rangle=\int_{0}^{\infty}{\frac{d}{ds}{\langle{\sigma(\zeta) v(s),v(s)}\rangle}}\,ds\geq 2C\gamma\int_{0}^{\infty}{\vert{\delta(\zeta) v(s)}\vert^2}\,ds>0.
            \end{equation*}
            Put it differently, $\langle{{\sigma(\zeta) u,u}\rangle}< 0$, as claimed. Back to the original assertion, let us fix $\underline{\zeta}\in \Gamma$. Since $\mathbb{E}^{-}(\underline{\zeta})\cap\ker{b(\zeta)}$ is nontrivial, for every $v\in \mathbb{E}^{-}(\underline{\zeta})\cap\ker{b(\zeta)}$ such that $v\neq 0$, it must happen on one hand that $\langle{\sigma(\underline{\zeta})v,v}\rangle\leq0$ by the opening argument of this proof and by continuity in $\gamma$, and on the other that $\langle{\sigma(\underline{\zeta})v,v}\rangle\geq0$ because of a combination of Plancherel's theorem and Part \ref{FS3} in Definition \ref{FS_C}. This ensures that the restriction of $\sigma(\underline{\zeta})$ to $\ker{b(\zeta)}$ is non-negative. Together, both facts indicate that $\sigma(X)|_{\ell(X)}=0$. 
            
            In order to prove that $\sigma(\zeta)$ certainly vanishes in a larger subspace, we argue in a similar fashion as above and integrate \eqref{KryIVP} from $0$ to a positive real number $t$,
            \begin{equation*}
                \langle{\sigma(\underline{\zeta}) v(t),v(t)}\rangle-\langle{\sigma(\underline{\zeta}) v(0),v(0)}\rangle=\int_{0}^{t}{\frac{d}{ds}{\langle{\sigma(\underline{\zeta}) v(s),v(s)}\rangle}}\,ds\geq 0,
            \end{equation*}
            or equivalently, 
            \begin{equation}\label{noninc}
                \langle{\sigma(\underline{\zeta}) v(t),v(t)}\rangle\geq\langle{\sigma(\underline{\zeta}) v(0),v(0)}\rangle=\langle{\sigma(\underline{\zeta}) u,u}\rangle.
            \end{equation}
            If we pick $u \in\ell(\underline{\zeta})$,  the right-hand side of \eqref{noninc} is automatically zero and it is safe to say that
            \begin{equation}\label{KrylovCond_2}
                \langle{\sigma(\underline{\zeta}) v(t),v(t)}\rangle\geq 0.
            \end{equation}
            Let $K_{\ell}(\underline{\zeta})$ be the smallest invariant subspace of $a(\underline{\zeta})$ containing $\ell(\underline{\zeta})$. As $\ell(\underline{\zeta})$ is included in $\mathbb{E}^{-}(\underline{\zeta})$  and $\mathbb{E}^{-}(\underline{\zeta})$ is invariant under $a(\underline{\zeta})$, we necessarily have that $K_{\ell}(\underline{\zeta})\subseteq\mathbb{E}^{-}(\underline{\zeta})$. Furthermore, as the solution of a first-order autonomous differential equation whose initial value belongs to an invariant space remains within the invariant space, it is true that
            \begin{equation}\label{KryPos}
                \langle{\sigma(\underline{\zeta}) v(t),v(t)}\rangle\geq 0.
            \end{equation}
            The reverse inequality may be inferred from two facts, namely, that $\langle{\sigma(\underline{\zeta})\,\cdot\,,\,\cdot\,}\rangle\leq 0$ when restricted to $\mathbb{E}^{-}(\underline{\zeta})$ (already verified!), and that $v(t)\in \mathbb{E}^{-}(\underline{\zeta})$ for every $t$.  
        \end{proof}
        \begin{Remark}\normalfont
            The latter means, roughly speaking, that in general there is no hope that $\sigma(\zeta)$ can kill only the critical direction $\ell(X)$, but a larger subspace containing $\ell(X)$.
        \end{Remark}
        Since the upcoming analysis is intended to motivate future results, we shall keep things simple and assume at this early stage that $K_{\ell}(\zeta)=\mathbb{E}^{-}(\zeta)$, leaving the more general case $K_{\ell}(\zeta)\subset\mathbb{E}(\zeta)$ for the next section where problems with variable coefficients are explored. Also, we shall assume for ease of exposition that there are no glancing points.
        
        \begin{Theorem}
            Suppose there exists a $\WR$ symmetrizer for Problem \ref{SystemPDOConstant}. If $f\in L^2_{\Delta}$ and  $Q_{\gamma}g\in L^2_{\gamma}$, there exist constants $\gamma_0\geq 1$ and $C>0$ such that for all $\gamma\geq \gamma_0$ and for every $u\in \mathcal{D}(\overline{\R}^{1+d}_{+})$, the following estimate holds 
            \begin{align}
                \gamma\int_{\R^{1+d}_{+}}{e^{-2\gamma t}\vert{\Delta_{\gamma} u}\vert^2}\,dt\, dx &+\int_{\R^{d}}{e^{-2\gamma t}\vert{\Delta_{\gamma} u(0)}\vert^2}\,dt\, dy 
                \\&\leq C\left(\frac{1}{\gamma}\int_{\R^{1+d}_{+}}{e^{-2\gamma t}\vert{\Delta_{\gamma} f}\vert^2}\,dt\, dx \right. \nonumber\left. +\int_{\R^{d}}{e^{-2\gamma t}\vert{Q_{\gamma}Bu(0)}\vert^2\,dt\, dy}\right).
            \end{align}
            Moreover, there exists an operator $Y_{\gamma}$ such that its symbol $y(\zeta)$ is a projector, $Y_{\gamma}g\in L^{2}_{\gamma}$, and 
            \begin{align}
                \gamma\int_{\R^{1+d}_{+}}{e^{-2\gamma t}\vert{\Delta_{\gamma} u}\vert^2}\,dt\, dx &+\int_{\R^{d}}{e^{-2\gamma t}\vert{\Delta_{\gamma} u(0)}\vert^2}\,dt\, dy 
                \\&\leq C\left(\frac{1}{\gamma}\int_{\R^{1+d}_{+}}{e^{-2\gamma t}\vert{\Delta_{\gamma} f}\vert^2}\,dt\, dx \right. \nonumber\left. +\int_{\R^{d}}{e^{-2\gamma t}\vert{\Delta_{\gamma}Y_{\gamma}Bu(0)}\vert^2\,dt\, dy}\right).
            \end{align}
        \end{Theorem}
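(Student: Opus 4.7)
The plan is to follow the classical Kreiss-symmetrizer argument adapted to the weak setting: use the equation $P u = f$ to derive a differential identity in $x_d$ for the functional $\langle \Sigma_{\gamma} u, u\rangle$, integrate over $x_d \ge 0$ to pick up the boundary term at $x_d = 0$, and then bound each ingredient using the four properties of a $\WR$ symmetrizer from Definition \ref{FS_C}.

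\textbf{Step 1 (differential identity).} Since $\Sigma_{\gamma}$ is self-adjoint and independent of $x_d$, and $D_d u = f - \mathdutchcal{A} u$ gives $\partial_d u = i(f - \mathdutchcal{A} u)$, differentiating and taking imaginary parts yields
\[
\partial_d \langle \Sigma_{\gamma} u, u\rangle \;=\; 2\,\ima\langle \Sigma_{\gamma}\mathdutchcal{A} u, u\rangle \;-\; 2\,\ima\langle \Sigma_{\gamma} f, u\rangle.
\]
Since $u\in\mathcal{D}(\overline{\R}^{1+d}_+)$ vanishes for large $x_d$, integrating in $x_d$ over $(0,\infty)$ and rearranging produces the master identity
\[
2\int_0^\infty \ima\langle \Sigma_{\gamma}\mathdutchcal{A} u, u\rangle\,dx_d \;+\; \langle \Sigma_{\gamma} u(0), u(0)\rangle \;=\; 2\int_0^\infty \ima\langle \Sigma_{\gamma} f, u\rangle\,dx_d.
\]

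\textbf{Step 2 (lower bound on the left, upper bound on the right).} Applying property \ref{FS2} to the bulk integrand and \ref{FS3} to the boundary term, the left-hand side is bounded below by
\[
2c\gamma\int_0^\infty \vert\Delta_{\gamma} u\vert^2\,dx_d \;+\; \alpha\,\vert\Delta_{\gamma} u(0)\vert^2 \;-\; \beta\,\vert Q_{\gamma} B u(0)\vert^2.
\]
For the right-hand side, property \ref{FS1} gives $\vert\langle \Sigma_{\gamma} f, u\rangle\vert \le C\,\vert\Delta_{\gamma} f\vert\,\vert\Delta_{\gamma} u\vert$, and a Young's inequality with weight $c\gamma$ splits this as $\tfrac{C^2}{c\gamma}\vert\Delta_{\gamma} f\vert^2 + c\gamma\,\vert\Delta_{\gamma} u\vert^2$. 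Choosing $\gamma_0$ large, the $c\gamma$-term is absorbed into the left and one obtains the first estimate stated in the theorem.

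\textbf{Step 3 (upgrading to the projector form).} For the second estimate I would construct $Y_{\gamma}$ from the $\WR$ factorization of $b^-$ in Proposition \ref{STBasis}. On the stable basis $\underline{\ee}^-(\zeta)$, $b^-$ is block-diagonal with a single scalar block $\underline{\Delta}(\zeta)$ and $(p-1)$ identity blocks; define $y(\zeta)$ to be the symbol of the projection that, in this same basis, annihilates the $(p-1)$ good components and fixes the single bad one, then transported symbolically to the ambient $\C^p$. Such $y$ is homogeneous of degree $0$, idempotent, and smooth away from the origin. Using the identity $q\dot{b} = m\delta$ recalled in the introduction, one checks that at the principal-symbol level $q\,b = M\,\delta\,y\,b$ on the range of $b$, so by pseudodifferential calculus $Q_{\gamma}B = \Op_{\gamma}(M)\,\Delta_{\gamma} Y_{\gamma} B + R_{\gamma}$ with $R_{\gamma}$ of order $-1$. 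Substituting the resulting estimate $\vert Q_{\gamma}B u(0)\vert^2 \lesssim \vert\Delta_{\gamma} Y_{\gamma} B u(0)\vert^2 + \gamma^{-1}\,\vert\Delta_{\gamma} B u(0)\vert^2$ into the first estimate and absorbing the remainder for $\gamma_0$ large produces the second estimate.

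\textbf{Main obstacle.} The delicate part is Step 3: the remainders produced by pseudodifferential composition in a $\WR$ setting sit at the same symbolic order as the principal terms (rather than strictly lower, as in the uniformly stable case), so one must verify carefully that every correction term generated by $Y_{\gamma}$, $Q_{\gamma}$ and $\Delta_{\gamma}$ is absorbable by the bulk term $\gamma\,\vertiii{\Delta_{\gamma} u}^2_{0,\gamma}$ once $\gamma_0$ is taken large enough. This is where the Gårding-type control and the precise homogeneity of $y$, $q$, $\delta$ must be combined, and it is also what motivates postponing the fully general treatment (with $K_\ell \subsetneq \mathbb{E}^-$ and glancing points) to the variable-coefficient section.
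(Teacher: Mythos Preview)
Your Steps 1--2 are correct and essentially identical to the paper's argument: differentiate $\langle\Sigma_\gamma u,u\rangle$ in $x_d$, integrate, apply properties \ref{FS1}--\ref{FS3}, and absorb via Young's inequality.

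Step 3, however, diverges from the paper and has a genuine gap. Your construction of $y$ as a $p\times p$ projector in the boundary-data space does not yield the claimed identity: with $m$ of size $p\times n$ and $\delta$ of size $n\times n$, the product $m\,\delta\,y\,b$ is dimensionally inconsistent (you cannot multiply the $p\times n$ block $m\delta$ by the $p\times n$ block $yb$). More importantly, you are working in the \emph{constant-coefficient} model, so there are no pseudodifferential remainders at all: by Plancherel, every symbolic identity lifts to an exact operator identity. Your remainder term $\gamma^{-1}\vert\Delta_\gamma Bu(0)\vert^2$ is both unnecessary and unabsorbable (the left-hand side controls $\vert\Delta_\gamma u(0)\vert$, not $\vert\Delta_\gamma Bu(0)\vert$).

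The paper's route is cleaner: complete $b$ to an invertible $n\times n$ block $\begin{psmallmatrix}b\\\mathdutchcal{x}\end{psmallmatrix}$, take its inverse $\begin{psmallmatrix}\mathdutchcal{y}&\mathdutchcal{d}\end{psmallmatrix}$, so that $b\mathdutchcal{y}=I_p$ and hence $\mathdutchcal{y}b$ is an $n\times n$ projector. Then the purely algebraic chain
\[
qb \;=\; q(b\mathdutchcal{y})b \;=\; (qb)(\mathdutchcal{y}b) \;=\; (m\delta)(\mathdutchcal{y}b)
\]
is exact, and Plancherel gives $Q_\gamma B = M_\gamma \Delta_\gamma Y_\gamma B$ with no error term whatsoever. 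Setting $Y_\gamma=\Op_\gamma(\mathdutchcal{y})$ and using $\vert M_\gamma w\vert\lesssim\vert w\vert$ finishes the second estimate immediately. The ``main obstacle'' you flag is a non-issue in this section; it only becomes relevant in the variable-coefficient case of Section~\ref{Sec_variablecoefficients}.
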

        \begin{proof}
           Let us see how Conditions \ref{FS0} to \ref{FS3} in Definition \ref{FS_C} imply energy estimates for the $\WR$ class in the current situation. To shorten the notation, we shall often omit the independent variables and the parameter $\gamma$ in the calculations ahead. We begin by expanding the term $d\langle{\Sigma u,u}\rangle/dx_d$ as shown,
            \begin{align}
                \nonumber\frac{d}{dx_d}\langle{\Sigma u,u}\rangle&=\langle{\Sigma\partial_{d}u,u}\rangle+\langle{\Sigma u,\partial_{d}u}\rangle\\
                \nonumber&=2\real{\langle{\Sigma iD_{d}u,u}\rangle}\\
                \nonumber&=2\real{\langle\Sigma i(f-\mathdutchcal{A}u),u\rangle}\\
                \nonumber&=2\ima{\langle{\Sigma au,u}\rangle}-2\ima{\langle{\Sigma f,u}\rangle}.
            \end{align}
            Keeping in mind that $u\in \mathcal{D}(\R^{1+d})$ vanishes at infinity, an integration over $[0,\infty)$ with respect to $x_d$ produces
            \begin{align*}
                \langle{\Sigma u(0), u(0)}\rangle=&-2\int_{0}^{\infty}{\ima{\langle{\Sigma Au,u}\rangle}}\,dx_d+2\int_{0}^{\infty}{\ima{\langle{\Sigma f,u}\rangle}}\,dx_d.
            \end{align*}
            To bound both integrals, we exploit Definition \ref{FS_C} directly. For example, from Condition \ref{FS1},
            \begin{align*}
                2\ima{\langle\Sigma f,u\rangle}&\leq 2|\langle\Sigma f,u\rangle|\leq C_1 \vert{\Delta f}\vert\vert{\Delta u}\vert
            \end{align*}
            for some positive constant $C_1$, whereas from (\ref{WRFS1} it is clear that
            \begin{align}\label{EE_Step1}
                \langle{\Sigma u(0), u(0)}\rangle&\leq -c\gamma\int_{0}^{\infty}{\vert{\Delta u}\vert^2}\,dx_d+C_1\int_{0}^{\infty}{\vert{\Delta f}\vert\vert{\Delta u}\vert}\,dx_d.
            \end{align}
            We can control \eqref{EE_Step1} from below by means of \ref{FS3} in Definition \ref{FS_C}, and from above via Young's inequality, so that
            \begin{align*}
                \alpha|\Delta u(0)|^2-\beta|QBu(0)|^2&\leq  \left(-c\gamma+\varepsilon\gamma\right)\int_{0}^{\infty}{\vert{\Delta u}\vert}\,dx_d+\frac{C_1}{4\varepsilon\gamma}\int_{0}^{\infty}{\vert{\Delta f}\vert^2}\,dx_d.
            \end{align*}
            Taking a sufficiently small value for $\varepsilon$ yields
            \begin{align*}
                \alpha|\Delta u(0)|^2-\beta|QBu(0)|^2&\leq  -C_2\gamma \int_{0}^{\infty}{\vert{\Delta u}\vert^2}\,dx_d+\frac{C_3}{\gamma}\int_{0}^{\infty}{\vert{\Delta f}\vert^2}\,dx_d,
            \end{align*}
            for some constant $C_2$, or what is the same,   
            \begin{align}\label{ConstantCoefficients}
                \gamma\int_{\R^{1+d}_{+}}{e^{-2\gamma t}\vert{\Delta u}\vert^2}\,dt\, dx &+\int_{\R^{d}}{e^{-2\gamma t}\vert{\Delta u(0)}\vert^2}\,dt\, dy 
                \\&\leq C\left(\frac{1}{\gamma}\int_{\R^{1+d}_{+}}{e^{-2\gamma t}\vert{\Delta f}\vert^2}\,dt\, dx \right. \nonumber\left. +\int_{\R^{d}}{e^{-2\gamma t}\vert{QBu(0)}\vert^2\,dt\, dy}\right),
            \end{align}
            for some $C>0$. 
                
            To conclude, we show how to recover estimates analogous to those in \cite{benzoni-gavage_serre_2007}, Chapter 8. In there, each term is ``filtered'' by $\Delta$, even the one containing the boundary matrix $B$. For this purpose, let
           \begin{equation*}
               m\eqdef \begin{pmatrix}
                    I_p &qB^{+}
                \end{pmatrix},
           \end{equation*}
           with
           \begin{equation*}
               B^{+}\eqdef (Be_{p+1},\,\cdots, Be_n),
           \end{equation*}
           and observe that 
            \begin{align*}
                qB=\begin{pmatrix}
                        qB^{-} &qB^{+}
                    \end{pmatrix}&=\begin{pmatrix}
                        I_p &qB^{+}
                    \end{pmatrix}\begin{pmatrix}
                        \delta^{-}&\\
                        &I_{n-p}\\
                    \end{pmatrix}\\&=m\delta.
            \end{align*}
            If we take any $(n-p)\times n$ matrix $\mathdutchcal{x}$ such that $\mathdutchcal{x}$ is surjective and
            \begin{equation*}
                \begin{pmatrix}
                    B\\
                    \mathdutchcal{x}\\
                \end{pmatrix}
            \end{equation*}
            is nonsingular, there exist matrices $\mathdutchcal{y}$ and $\mathdutchcal{d}$ with respective dimensions $n\times p$ and $n\times (n-p)$, so that
            \begin{equation}\label{BXDY}
                I_n=\begin{pmatrix}
                    b\\
                    \mathdutchcal{x}\\
                \end{pmatrix}\begin{pmatrix}
                    \mathdutchcal{y}& \mathdutchcal{d}
                \end{pmatrix}=\begin{pmatrix}
                    b\mathdutchcal{y} &b\mathdutchcal{d}\\
                    \mathdutchcal{x}\mathdutchcal{y} &\mathdutchcal{x}\mathdutchcal{d}\\
                \end{pmatrix}.
            \end{equation}    
            From \eqref{BXDY}, we may deduce that $b\mathdutchcal{y}=I_p$, and then that 
            \begin{equation*}
                qb=qb\mathdutchcal{y}b=m\delta \mathdutchcal{y}b.
            \end{equation*}
            Finally, if $Y\eqdef\mathrm{Op}(y)$ and $M\eqdef\mathrm{Op}(m)$, by invoking Plancherel's theorem we conclude that $QB=M\Delta YB$ and
            \begin{align}\label{Delta_Estimates_CC}
                \gamma\int_{\R^{1+d}_{+}}{e^{-2\gamma t}\vert{\Delta u}\vert^2}\,dt\, dx &+\int_{\R^{d}}{e^{-2\gamma t}\vert{\Delta u(0)}\vert^2}\,dt\, dy 
                \nonumber\\&\lesssim\left(\frac{1}{\gamma}\int_{\R^{1+d}_{+}}{e^{-2\gamma t}\vert{\Delta f}\vert^2}\,dt\, dx \right. \nonumber\left. +\int_{\R^{d}}{e^{-2\gamma t}\vert{\Delta QBu(0)}\vert^2\,dt\, dy}\right)\\&\lesssim \left(\frac{1}{\gamma}\int_{\R^{1+d}_{+}}{e^{-2\gamma t}\vert{\Delta f}\vert^2}\,dt\, dx \right. \left. +\int_{\R^{d}}{e^{-2\gamma t}\vert{\Delta YBu(0)}\vert^2\,dt\, dy}\right),
            \end{align}
            as desired.
        \end{proof}
        \begin{Theorem}
            Let $(P,B)$ be as in \ref{SystemPDOConstant}. If $(P,B)$ is of $\WR$ type, there exists a $\mathcal{WR}$ symmetrizer satisfying Definition \ref{FS_C}.
        \end{Theorem}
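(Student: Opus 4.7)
The plan is to build the principal symbol $\sigma(\zeta)$ of $\Sigma_\gamma$ microlocally and then assemble the pieces via a partition of unity on the cosphere $S^d$. By the block structure condition (Proposition \ref{BSCProp}), around each $\underline{\zeta}\in S^d$ there is a smooth invertible change of basis $\ee_0(\zeta)$ reducing $a(\zeta)$ to block-diagonal form $\mathrm{diag}(a_1,\dots,a_{\mathdutchcal{q}})$. Under the simplifying assumption that there are no glancing points, every block is either elliptic (type (i), spectrum off the real axis) or scalar hyperbolic (type (ii), with $i\partial_\gamma a_j(\zeta)\in\mathbb{R}\setminus\{0\}$). The local symmetrizer is defined as $\sigma_{\mathrm{loc}}(\zeta)=(\ee_0^{-1}(\zeta))^{*}\,\tilde\sigma(\zeta)\,\ee_0^{-1}(\zeta)$, with $\tilde\sigma(\zeta)$ block-diagonal and chosen, on the stable components, to be compatible with the factorization of $b^-$ from Proposition \ref{STBasis}.

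The local recipe is the standard one, modified on the stable blocks that touch the critical set $\Gamma$. On elliptic blocks I take the standard Kreiss matrix $\tilde\sigma_k$, Hermitian of signature $(\dim\mathbb{E}^{+}_k,\dim\mathbb{E}^{-}_k)$, satisfying $\ima(\tilde\sigma_k a_k)\geq c\gamma I$. On scalar hyperbolic blocks I set $\tilde\sigma_j=+1$ on outgoing (unstable) directions and $\tilde\sigma_j=-\mu_j$ on incoming (stable) directions, Hersh's lemma identifying the sign via $\sign(i\partial_\gamma a_j)$. Away from $\Gamma$ we take $\mu_j=1$. Near $\Gamma$, the standing assumption $K_{\ell}(\zeta)=\mathbb{E}^{-}(\zeta)$ together with Proposition \ref{KrilovDegeneracy} force $\sigma$ to degenerate on all of $\mathbb{E}^{-}$, so I set $\mu_j(\zeta)=|\underline{\Delta}(\zeta)|^2$ on the stable directions of critical blocks, which matches exactly the degeneracy pattern of $\delta$ prescribed by Definition \ref{LopMultiplier}.

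Properties (i)--(iii) of Definition \ref{FS_C} then follow essentially by inspection of the local pieces. Hermiticity is automatic. The continuity estimate is verified block by block: on unstable directions and elliptic blocks $\delta$ is invertible, so $|\langle\sigma v,v\rangle|\lesssim|v|^2\lesssim|\delta v|^2$; on stable directions of critical blocks $\tilde\sigma$ carries the weight $|\underline{\Delta}|^2$, which matches $|\delta v|^2$ in the $\ee_0$-basis by the diagonal design of $\delta$. The G\aa rding-type inequality is a direct computation: elliptic blocks yield $c\gamma I$ from the Kreiss recipe, while in scalar hyperbolic blocks $\ima(\tilde\sigma_j a_j)=-\mu_j\,\ima(a_j)$, and the first-order expansion $\ima(a_j)=-\gamma\,\partial_\gamma a_j|_{\gamma=0}+O(\gamma^2)$, combined with the sign convention, produces $c\gamma\mu_j|v_j|^2=c\gamma|\delta v_j|^2$.

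The main obstacle is the boundary condition (iv), where the interplay between $\sigma$, $\delta$, and the factored boundary matrix $b^-$ comes into play. I evaluate $\sigma$ at $x_d=0$ in the basis $\underline{\ee}(\zeta)=(\underline{\ee}^{-}(\zeta),\ee^{+}(\zeta))$ furnished by Proposition \ref{STBasis} and its corollary, and split $v(0)=v^{-}+v^{+}$. On $\mathbb{E}^{+}$, $\sigma$ is positive definite and $\delta$ is elliptic, so $\langle\sigma v^{+},v^{+}\rangle\geq c|v^{+}|^2\geq c|\delta v^{+}|^2$. On $\mathbb{E}^{-}$ the construction gives $\langle\sigma v^{-},v^{-}\rangle\geq -C|\underline{\Delta}|^2|v^{-}|^2=-C|\delta v^{-}|^2$. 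To absorb this negative contribution I build $Q_\gamma=\Op_\gamma(q)$ with $qb=m\delta$, using the factorization $b^{-}=\mathfrak{p}\,\mathrm{diag}(\underline{\Delta},1,\dots,1)$ of Proposition \ref{STBasis}: choose $q$ so that $q\mathfrak{p}=I_p$ and extend by the bounded matrix $m$ already identified in the preceding energy-estimate argument. This yields $|Q B v(0)|^2\gtrsim|\delta v^{-}|^2-C|v^{+}|^2$, and combining with the previous two bounds produces exactly $\langle\sigma v(0),v(0)\rangle\geq\alpha|\delta v(0)|^2-\beta|QBv(0)|^2$. Globalizing via a partition of unity on $S^d$ introduces only lower-order remainders of order $\gamma^{-1}$, which are absorbed for $\gamma$ sufficiently large.
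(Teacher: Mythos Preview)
Your overall architecture mirrors the paper's: diagonalize via the block structure, weight the stable directions near $\Gamma$ by the (squared) Lopatinski determinant, and verify (i)--(iv) of Definition \ref{FS_C} blockwise. Parts (i)--(iii) go through essentially as you say. The trouble is in the boundary step (iv), where two concrete points prevent the argument from closing.

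First, your choice $q\mathfrak{p}=I_p$ does \emph{not} give $qb=m\delta$ with a bounded $m$. In the basis $\underline{\ee}^{-}$ of the Corollary to Proposition \ref{STBasis} one obtains $qb^{-}=\mathrm{diag}(\mat{\Delta},1,\dots,1)$, whereas $\delta^{-}=\mat{\Delta}I_p$; forcing $qb^{-}=m'\delta^{-}$ would require $m'=\mathrm{diag}(1,\mat{\Delta}^{-1},\dots,\mat{\Delta}^{-1})$, which blows up on $\Gamma$. The paper instead takes
\[
q(\zeta)=c(\zeta)\,\mathrm{diag}\bigl(1,\mat{\Delta}(\zeta),\dots,\mat{\Delta}(\zeta)\bigr)\,\mathfrak{p}^{-1}(\zeta),
\]
engineered precisely so that $qb^{-}=\mat{\Delta}I_p=\delta^{-}$; the extra diagonal factor is not optional for the identity you invoke.

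Second, and more decisively, you have fixed $\tilde{\sigma}_j=+1$ on the outgoing scalar blocks, leaving no free parameter. Even granting the inequality $|qbv|^2\geq c_1|\delta v^{-}|^2-C'|v^{+}|^2$ (which does hold, with either choice of $q$), your bounds combine to
\[
\langle\sigma v,v\rangle+\beta\,|qbv|^2\;\geq\;(\beta c_1-C)\,|\delta v^{-}|^2+(c-\beta C')\,|v^{+}|^2,
\]
and there is no single $\beta>0$ making both coefficients positive unless $CC'<cc_1$, over which you have no control. The paper keeps a large free constant $\rho$ on the unstable part, so that $\langle\sigma v,v\rangle=-|\mat{\Delta}v^{-}|^2+\rho\,|v^{+}|^2$; it then writes $-|\mat{\Delta}v^{-}|^2=|\mat{\Delta}v^{-}|^2-2|\mat{\Delta}v^{-}|^2$, bounds $|\mat{\Delta}v^{-}|^2\leq C\bigl(|qbv|^2+|v^{+}|^2\bigr)$ via the exact identity $\delta^{-}=qb^{-}$, and only \emph{afterwards} chooses $\rho$ large enough to absorb the resulting $-2C|v^{+}|^2$. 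This two-stage choice (first $\beta$, then $\rho$) is the standard Kreiss device, and it is just as essential in the $\WR$ setting; without it, (iv) does not follow.
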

        \begin{proof}
            It suffices to argue symbolically thanks to Plancherel's formula. That said, we need to construct a symbol $\sigma$ with the following properties:
            \begin{enumerate}[label=\normalfont(\roman*)]
                \item \label{ZeroMic} $\sigma$ is Hermitian, 
                \item \label{ZeroOneMic} for every $v_1,v_2\in \C^n$, there is a positive constant $C$ satisfying
                    \begin{equation*}
                        \langle{\sigma v_1,v_2}\rangle\leq C\vert{\delta v_1}\vert\vert{\delta v_2}\vert,
                    \end{equation*}
                \item  \label{OneMic} there exists a positive constant $c$ such that
                    \begin{equation*}
                        \ima{(\sigma a v,v)}\geq c\gamma\vert{\delta v}\vert^2,
                    \end{equation*}
                    for each $v\in \C^n$,
                \item \label{TwoMic} there are positive constants $\alpha$ and $\beta$, together with a symbol $q$ such that
                  \begin{equation*}
                        \langle{\sigma v(0),v(0)}\rangle\geq \alpha\vert{\delta v(0)}\vert^2-\beta\vert{q bv(0)}\vert^2.
                    \end{equation*}
            \end{enumerate}
            
            Let $\underline{\zeta}\in\mathcal{H}$. The block structure condition indicates that $a(\xi)$ is smoothly diagonalizable in a neighborhood $\mathcal{V}$ of $\underline{\zeta}$ with eigenvalues $a_{1}(\zeta),\,\cdots, a_{n}(\zeta)$ \footnote{counted according to their multiplicities} and eigenvectors $e_1(\zeta),\,\cdots, e_n(\zeta)$ ordered as the columns of a nonsingular matrix $\ee_0(\zeta)$, so that 
            \begin{equation}
                \dot{a}_1(\zeta)=\ee_0^{-1}(\zeta)a_1(\zeta)\ee_0(\zeta)=
                \begin{pNiceMatrix}
                    a_{1,1}(\zeta)&0&\cdots &0\\
                    0&a_{1,2}(\zeta)&\cdots &0\\
                    \vdots & \vdots &\ddots &\vdots \\
                    0&0&\cdots &a_{1,n}(\zeta)\\
                \end{pNiceMatrix}.
            \end{equation}
            If $\kappa_j(\underline{\zeta})\eqdef-i\partial a_{j}(\underline{\zeta})/\partial \gamma$, Taylor's theorem with respect to $\gamma$ shows that 
            \begin{equation}\label{Taylor}
                a_{j}(\zeta)=a_j(\underline{\zeta})+i\gamma \kappa_j(\underline{\zeta})+\gamma^2 w_j(\zeta),
            \end{equation}
            where $w_j(\zeta)$ is a smooth function.  Notice that \eqref{Immra} is bounded away from zero because of the continuity of $\kappa_j(\underline{\zeta})$ and the fact that $\underline{\kappa}_j\eqdef\kappa(\underline{\zeta})\in \R\setminus\{0\}$ (also ensured by Proposition \ref{BSC}). Actually, we may say a little bit more about $\underline{\kappa}_j$. By definition, 
            \begin{equation*}
                \underline{\kappa}_{j}=\real{\left(-i\frac{\partial a_j}{\partial \gamma}(\underline{\zeta})\right)}=\ima{\left(\frac{\partial a_j}{\partial \gamma}(\underline{\zeta})\right)}=\ima{\left(\lim_{\gamma\to 0^{+}}\frac{a_j(\zeta)-a_j(\underline{\zeta})}{\gamma}\right)}=\lim_{\gamma\to 0^{+}}\frac{\ima{a_j(\zeta)}}{\gamma},
            \end{equation*}
            showing that $\underline{\kappa}_j$ and $\ima{a(\zeta)}$ have the same sign for $\zeta=(t,y,x_d,\tau,\eta,\gamma)$ sufficiently close to $\underline{\zeta}$.
            Let 
            \begin{equation}
                r=\begin{pNiceMatrix}
                        r_{1}&0&\cdots &0\\
                        0&r_{2}&\cdots &0\\
                        \vdots & \vdots &\ddots &\vdots \\
                        0&0&\cdots &r_{n}\\
                    \end{pNiceMatrix},
            \end{equation}
            with every $r_j\in \R$, $1\leq j\leq n$ to be chosen later. If 
            \begin{equation}\label{delta_diag}
                \delta(\zeta)\equiv\mathrm{diag}\left(\delta_1(\zeta),\,\cdots, \delta_n(\zeta)\right)\eqdef\mathrm{diag}\left(\mat{\Delta}(\zeta)I_p, I_{n-p}\right),
            \end{equation}
            we seek $\sigma$ in the form $\sigma= \delta^*r\delta$. Given that $a$, $\delta$, and $r$ are all diagonal, we have
            \begin{equation}\label{im_ra_1}
                \ima{\overline{\delta}_jr_{j}a_{j}\delta_j(\zeta)}=\ima\left({r_j a_{j}(\underline{\zeta})\vert{\delta_j(\zeta)}\vert^2+i\gamma r_j \kappa_j(\underline{\zeta})\vert{\delta(\zeta)}\vert^2+r_j\gamma^2w_j(\zeta)\vert{\delta_j(\zeta)}\vert^2}\right),
            \end{equation}
            and since $a_j(\underline{\zeta})\in \R$ (see Definition \ref{BSC}), we may discard $r_ja_j(\underline{\zeta})$ in \eqref{im_ra_1} and write 
            \begin{equation}\label{Immra}
                \ima{r_ia_j}(\zeta)=\gamma r_i \real{\kappa_{j}}(\underline{\zeta})+\gamma^2r_i\ima{w_j(\zeta)}.
            \end{equation}                    
            Now, considering that
            \begin{equation*}
                \mathbb{E}^{-}(\zeta)=\bigoplus_{\ima a_j(\zeta)<0}\ker{(a(\zeta)-a_j(\zeta))}=\{v\in \C^{n}: v_j=0 \quad \mathrm{if}\quad \underline{\kappa}_j>0\},
            \end{equation*}
            it is straightforward to check that Condition \eqref{OneMic} is satisfied when 
            \begin{align}
                r_j=\left\{\begin{aligned}
         		       -1  \quad \textrm{for} \quad \underline{\kappa}_j>0,\\
         		      \rho \quad \textrm{for} \quad \underline{\kappa}_j<0,\\
        		\end{aligned}\right. 
            \end{align}
            with $\rho$ being a positive constant to be specified when meeting Condition \ref{TwoMic}. To check the last item, let us suppose that ${v}^-$ (resp. $v^+$) is the projection component of $v\in \C^{n}$ onto ${\mathbb{E}}^{-}(\zeta)$ (resp. ${\mathbb{E}}^{+}(\zeta)$) so that
            \begin{equation*}
                v=\begin{pmatrix}
                    v^{-}\\v^{+}
                \end{pmatrix}.
            \end{equation*}  
            A direct calculation gives
            \begin{align}\label{w_minus_plus}
                \langle{\sigma(\zeta)v,v}\rangle=\langle{\delta^*(\zeta)r\delta(\zeta)v,v}\rangle&=-|\mat{\Delta}(\zeta)v^-|^2+\rho|v^+|^2\\&=\phantom{-}|\mat{\Delta}(\zeta)v^-|^2+\rho|v^+|^2-2|\mat{\Delta}(\zeta)v^-|^2 \nonumber,
            \end{align}
            which suggests using equation \eqref{B_minus} to link \eqref{w_minus_plus} with the boundary matrix $B$. Indeed, borrowing $\mathfrak{p}(\zeta)$ an $c(\zeta)$ from Proposition \ref{STBasis}, one has
            \begin{align}\label{DeltaDecomp}
               \nonumber\mat{\Delta}I_p&=c(\zeta)(\mat{\Delta}I_p )c^{-1}(\zeta)\\&=c(\zeta)\begin{pmatrix}
                1&0&\hdots &0\\
                0&\mat{\Delta}(\zeta) &\hdots&0 \\
                \vdots&\vdots &\ddots &\vdots\\
                0&0&\hdots&\mat{\Delta}(\zeta)\\ 
            \end{pmatrix}\begin{pmatrix}
                \mat{\Delta}(\zeta)&0&\hdots &0\\
                0&1 &\hdots&0 \\
                \vdots&\vdots &\ddots &\vdots\\
                0&0&\hdots&1\\ 
            \end{pmatrix}c^{-1}(\zeta)\\&=c(\zeta)\begin{pmatrix} \nonumber
                1&0&\hdots &0\\
                0&\mat{\Delta}(\zeta) &\hdots&0 \\
                \vdots&\vdots &\ddots &\vdots\\
                0&0&\hdots&\mat{\Delta}(\zeta)\\ 
            \end{pmatrix}\mathfrak{p}^{-1}(\zeta)\mathfrak{p}(\zeta)\begin{pmatrix}
                \mat{\Delta}(\zeta)&0&\hdots &0\\
                0&1 &\hdots&0 \\
                \vdots&\vdots &\ddots &\vdots\\
                0&0&\hdots&1\\ 
            \end{pmatrix}c^{-1}(\zeta)\\&=q(\zeta)b^{-}(\zeta),
           \end{align}
           where $q(\zeta)\eqdef c(\zeta)\mathrm{diag}(1,\mat{\Delta}(\zeta),\,\cdots, \mat{\Delta}(\zeta))\mathfrak{p}^{-1}(\zeta)$. Armed with \eqref{DeltaDecomp} and 
           \begin{equation*}
               b^{+}(\zeta)\eqdef (b(\zeta)e_{p+1}(\zeta),\,\cdots, b(\zeta)e_n(\zeta)),
           \end{equation*}
           we see that the first term in \eqref{w_minus_plus} unfolds as
            \begin{align}\label{Delta_minus_w_minus}
                |\mat{\Delta}(\zeta)v^-|^2&=\left|{\begin{pmatrix}
                    \mat{\Delta}(\zeta)I_p  & 0\\
                    0 & I_{n-p} 
                \end{pmatrix}\begin{pmatrix}
                    v^- \\ 0
                \end{pmatrix}}\right|^2\nonumber \\&=\left|\begin{pmatrix}
                    \mat{\Delta}(\zeta)I_p  &  q(\zeta)b^{+}(\zeta)\\
                    0 & I_{n-p} 
                \end{pmatrix}\begin{pmatrix}
                    v^- \\ 0
                \end{pmatrix}\right|^2\nonumber \\&=\left|\begin{pmatrix}
                    \mat{\Delta}(\zeta)I_p  &  q(\zeta)b^{+}(\zeta)\\
                    0 & I_{n-p} 
                \end{pmatrix}\begin{pmatrix}
                    v^- \\ v^+
                \end{pmatrix}-\begin{pmatrix}
                    \mat{\Delta}(\zeta)I_p  &  q(\zeta)b^{+}(\zeta)\\
                    0 & I_{n-p} 
                \end{pmatrix}\begin{pmatrix}
                    0 \\ v^+ 
                \end{pmatrix}\right|^2\nonumber\\&=\left|\begin{pmatrix}
                    q(\zeta)b^{-}(\zeta)  &  q(\zeta)b^{+}(\zeta)\\
                    0 & I_{n-p} 
                \end{pmatrix}\begin{pmatrix}
                    v^- \\ v^+
                \end{pmatrix}-\begin{pmatrix}
                    \mat{\Delta}(\zeta)I_p  &  q(\zeta)b^{+}(\zeta)\\
                    0 & I_{n-p} 
                \end{pmatrix}\begin{pmatrix}
                    0 \\ v^+ 
                \end{pmatrix}\right|^2\nonumber\\&\leq\left(\left|\begin{pmatrix}
                    q(\zeta)b(\zeta)v \\ v^+ 
                \end{pmatrix}\right|+C|v^+|\right)^2\leq  C\left(|q(\zeta)b(\zeta)v|^2+|v^+|^2\right), 
             \end{align}
            where the final line accounts for the convexity of the power function $x \mapsto x^2$. In the end, combining \eqref{w_minus_plus} and \eqref{Delta_minus_w_minus}, we get
             \begin{align*}
                 \langle{\sigma(\zeta)v,v}\rangle&\geq |\mat{\Delta}(\zeta)v^-|^2 +(\rho-2C)|v^+|^2-2C|q(\zeta)b(\zeta)v|^2,
           \end{align*}
           from which the result follows from choosing $\rho$ such that $\alpha\eqdef\rho-2C=1$, $\beta\eqdef 2C$.
        \end{proof}
\section{The variable coefficients case}\label{Sec_variablecoefficients}

    \subsection{Construction of a $\mathcal{WR}$ symmetrizer and a priori estimates for the general problem.}
        We shall generalize the notion of a \Lop multiplier to fit Problem \ref{SystemPDO}, taking into account Proposition \ref{KrilovDegeneracy}.
        \begin{Theorem}\label{SymWR}
            Suppose that $(P_{\gamma},B_{\gamma})$ is a $\mathcal{WR}$ boundary value problem furnished with Assumptions \ref{Assumption1} and \ref{Assumption2}.  Let $\underline{X}\in\mathcal{H}$. Then there exist symbols $\tilde{\ee}_0(X)$ and $\delta(X)$ defined in some conic neighborhood $\mathcal{V}$ of $\underline{X}$ such that for every $X\in \mathcal{V}$,
            \begin{enumerate}[label=\normalfont(\roman*)]
                \item $\tilde{\ee}_0(X)$ and $\delta(X)$ are homogeneous of degree $0$,
                \item $\tilde{\ee}_0(X)\in GL_n(\C)$ and 
                \begin{equation*}
                    \dot{a}_1(X)\eqdef \tilde{\ee}_0^{-1}(X)a_1(X)\tilde{\ee}_0(X)
                \end{equation*}
                is diagonal with entries $a_{1,1}(X),\,\cdots, a_{1,n}(X)$,
                \item there is $\mathdutchcal{s}\leq p$ so that $\delta(X)$ is diagonal with respect to the basis $\tilde{\ee}_0(X)$ and given by \begin{equation*}
                    \delta(X)=\mathrm{diag}(\delta^{-}(X), I_{n-p}),   
                \end{equation*}
                where
                \begin{equation*}
                    \delta^{-}(X) \eqdef\begin{pNiceMatrix}
                        \begin{pmatrix}
                    \delta_{1}^{-}(X)&\hdots&0\\
                    \vdots&\ddots&\vdots\\
                    0&\hdots&\delta_{\mathdutchcal{s}}^{-}(X)\\
                \end{pmatrix}&      \\
                        &I_{p-\mathdutchcal{s}}
                    \end{pNiceMatrix}
                \end{equation*}    
                is such that each $\delta^{-}_j$ is the solution of the transport equation
                \begin{align}\label{TransportEq}
                    \left\{\begin{aligned}
         		       \partial_{d}\delta_{j}^{-}+\{\delta_{j}^{-},a_{1,j}\}&=0,\\
         		       \delta_{j}^{-}|_{x_d=0}&=\mat{\Delta}, \\
    		        \end{aligned}\right.
                \end{align}  
                \item \label{deltaBoundary} when $X\in \mathbb{Y}_S\cap \mathcal{H}$, there exist matrices $q(X)$ and $m(X)$ depending smoothly on $X\in \mathcal{V}\cap \mathbb{Y}$ with dimensions $p\times p$ and $p\times n$, respectively, so that if $\dot{b}(X)\eqdef b(X)\tilde{\ee}_0(X)$, there holds
                \begin{equation}\label{BoundaryRelation}
                    q(X)\dot{b}(X)=m(X)\delta(X),
                \end{equation}
                \item $\ker{\delta(X)}\neq \{0\}$ if and only if $X\in \Gamma$. When nontrivial, $\ker{\delta(X)}$ is an $\mathdutchcal{s}-$dimensional invariant subspace of $a_1(X)$ containing the critical direction $\ell(X)$.
            \end{enumerate}
       \end{Theorem}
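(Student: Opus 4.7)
The plan is to build $\tilde{\ee}_0$ as a smooth eigenbasis of $a_1$ adapted to the stable subspace $\mathbb{E}^{-}(X)$ and to the critical direction $\ell$, then define the diagonal entries of $\delta$ by propagating the \Lop determinant $\mat{\Delta}$ from the boundary along characteristics, and finally read off the boundary factorization from the structure supplied by Proposition \ref{STBasis}.

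I would start by invoking the block structure condition at $\underline{X}\in \mathcal{H}$: Propositions \ref{BSCProp} and \ref{Continuation} together with Definition \ref{BSC} yield a smooth, degree-$0$ homogeneous, nonsingular $\tilde{\ee}_0$ whose columns are eigenvectors of $a_1(X)$, with real eigenvalues $a_{1,1}(X),\ldots, a_{1,n}(X)$, in a conic neighborhood $\mathcal{V}$ of $\underline{X}$. I order the columns so that the last $n-p$ span $\mathbb{E}^{+}(X)$ and the first $p$ span $\mathbb{E}^{-}(X)$. Inside $\mathbb{E}^{-}$, which is $a_1$-invariant, the Krylov subspace $K_\ell(X)$ of the critical direction (relevant by Proposition \ref{KrilovDegeneracy}) is itself spanned by eigenvectors of $a_1$, since $a_1|_{\mathbb{E}^{-}}$ is diagonalizable. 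Setting $\mathdutchcal{s}\eqdef\dim K_\ell(\underline{X})$ (assumed locally constant), I place an eigenbasis of $K_\ell$ in the first $\mathdutchcal{s}$ columns of $\tilde{\ee}_0$ and eigenvectors completing a basis of $\mathbb{E}^{-}$ in the next $p-\mathdutchcal{s}$ columns, so that $\dot{a}_1 = \tilde{\ee}_0^{-1} a_1 \tilde{\ee}_0$ is diagonal; this settles (i)-(ii).

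For (iii), define $\delta_j^{-}$ ($j=1,\ldots,\mathdutchcal{s}$) as the solution of the transport equation \eqref{TransportEq} with boundary data $\mat{\Delta}$. Non-characteristicity of the boundary (Assumption \ref{Assumption1}) ensures that $\partial_d$ is transverse to any characteristic surface for $a_{1,j}$, so the method of characteristics supplies unique smooth solutions on $\mathcal{V}$; degree-$0$ homogeneity is enforced by working on $S^d$ and extending radially. One then sets $\delta=\mathrm{diag}(\delta_1^{-},\ldots,\delta_\mathdutchcal{s}^{-}, I_{p-\mathdutchcal{s}}, I_{n-p})$. For (iv) I restrict to $\mathcal{V}\cap \mathbb{Y}$, where $\delta|_{x_d=0}=\mathrm{diag}(\mat{\Delta}\,I_\mathdutchcal{s},I_{n-\mathdutchcal{s}})$; applying Proposition \ref{STBasis} and its Corollary to a basis of $\mathbb{E}^{-}$ assembled from the first $p$ columns of $\tilde{\ee}_0$ (after the triangular adjustment used in the Corollary's proof), I get $\dot{b}^{-}=\mathfrak{p}\,\mathrm{diag}(\mat{\Delta}\,I_\mathdutchcal{s},I_{p-\mathdutchcal{s}})$ on $\mathcal{V}\cap\mathbb{Y}$. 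Mimicking the algebraic rearrangement of equation \eqref{DeltaDecomp} in the constant-coefficient proof then produces smooth matrices $q$ (essentially related to $\mathfrak{p}^{-1}$) and $m$ for which $q\dot{b}=m\delta$.

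Finally, (v) follows because $\delta$ is diagonal in the eigenbasis of $a_1$: any coordinate subspace in that basis is $a_1$-invariant, and by construction $\ker\delta(\underline{X})$ is the span of the first $\mathdutchcal{s}$ eigenvectors, namely $K_\ell(\underline{X})$, which contains $\ell(\underline{X})$. On the boundary, $\delta_j^{-}|_{x_d=0}=\mat{\Delta}$ and the definition of $\Gamma$ give the equivalence $\ker\delta(X)\neq\{0\}\Leftrightarrow X\in\Gamma$. The main obstacle I foresee is in Step 1: aligning the first $\mathdutchcal{s}$ eigenvectors of $a_1$ with $K_\ell$ \emph{smoothly} across $\mathcal{V}$ when $\mathdutchcal{s}>1$, since $\ell$ is defined intrinsically through $\ker b$ while the eigenspaces come from $a_1$, and $\mathdutchcal{s}$ must remain locally constant for the construction to be smooth. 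A secondary technical point is the compatibility of $q$ and $m$ in (iv) when $\mathdutchcal{s}>1$: the single factor $\mat{\Delta}$ supplied by Proposition \ref{STBasis} must be converted into $\mathdutchcal{s}$ columns annihilated modulo $\mat{\Delta}$, which requires a careful use of the change of coordinates between $\underline{\ee}^-$ and the eigenbasis of $a_1$, together with a rank-reducing choice of $q$.
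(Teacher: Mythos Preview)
Your outline matches the paper's proof closely: eigenbasis from the block structure condition, adapted to the Krylov space $K_\ell$; transport $\mat{\Delta}$ off the boundary via \eqref{TransportEq}; read off $q,m$ from the factorization of Proposition~\ref{STBasis}. The two obstacles you flag at the end are exactly the ones the paper addresses, and they are genuine; here is how they are resolved.

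For the smoothness and local constancy of $\mathdutchcal{s}$: the paper does \emph{not} assume $\dim K_\ell$ is locally constant. It first proves a short linear-algebra lemma (Lemma~\ref{LemDecomp}) saying that any invariant subspace of a diagonalizable operator splits as the direct sum of its intersections with the eigenspaces, which justifies replacing basis vectors inside each eigenspace. It then defines $\mathdutchcal{s}$ as the number of coefficients $d_i$ in \eqref{d-coefficients} that are not \emph{identically} zero on $\mathcal{V}$, i.e.\ the largest value of $\dim K_\ell(X)$ encountered on $\mathcal{V}$. This forces $\mathdutchcal{s}$ to be constant on $\mathcal{V}$ by construction, at the cost that the span of the first $\mathdutchcal{s}$ columns of $\tilde{\ee}_0$ may strictly contain the true Krylov space at some points; this overshoot is harmless for everything that follows.

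For the conversion of the single $\mat{\Delta}$ supplied by Proposition~\ref{STBasis} into $\mathdutchcal{s}$ diagonal factors: your intermediate claim $\dot{b}^{-}=\mathfrak{p}\,\mathrm{diag}(\mat{\Delta}\,I_\mathdutchcal{s},I_{p-\mathdutchcal{s}})$ is not what the proposition gives, since $b^{-}$ carries only one factor of $\mat{\Delta}$. The paper instead factors $\mat{\Delta}\,I_\mathdutchcal{s}=\upsilon_1\upsilon_2$ with $\upsilon_1=\mathrm{diag}(1,\mat{\Delta},\ldots,\mat{\Delta})$ and $\upsilon_2=\mathrm{diag}(\mat{\Delta},1,\ldots,1)$, conjugates by $\mathfrak{s}=\mathrm{diag}(c,I_{n-p})$, and observes that the block $\mathfrak{p}\,\upsilon_2\,c^{-1}$ is exactly $b^{-}$ by Proposition~\ref{STBasis}. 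The remaining $\mathdutchcal{s}-1$ copies of $\mat{\Delta}$ sitting in $\upsilon_1$ are absorbed into $q=\mathrm{diag}(c_\mathdutchcal{s}\upsilon_1,I_{p-\mathdutchcal{s}})\mathfrak{p}^{-1}$, so $q$ is singular on $\Gamma$; this degeneracy of $q$ is the mechanism that makes $q\dot{b}=m\delta$ consistent with a rank-$\mathdutchcal{s}$ kernel in $\delta$.
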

        Prior to entering into the proof of Theorem \ref{SymWR}, we shall state an auxiliary result. 
        \begin{Lemma}\label{LemDecomp}
            Let $V$ be a finite dimensional vector space. Suppose $T\in \mathrm{End}(V)$ is diagonalizable with distinct eigenvalues $\lambda_1, \,\cdots, \lambda_q$, and corresponding eigenspaces $V_{\lambda_1},\,\cdots,V_{\lambda_q}$. Then every $T-$invariant subspace $W$ can be decomposed as
            \begin{equation}\label{W_decomp}
                W=(W\cap V_{\lambda_1})\oplus\cdots\oplus(W \cap V_{\lambda_q}).
            \end{equation}
        \end{Lemma}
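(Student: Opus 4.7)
The plan is to use a spectral projection argument, realized concretely through Vandermonde inversion so that it only uses the $T$-invariance of $W$. Given the ambient decomposition $V=V_{\lambda_1}\oplus\cdots\oplus V_{\lambda_q}$, every $w\in W$ admits a unique representation $w=v_1+\cdots+v_q$ with $v_i\in V_{\lambda_i}$. It suffices to prove that each $v_i$ already lies in $W$; once this is established, we obtain $W=\sum_i(W\cap V_{\lambda_i})$, and directness of this sum is automatic since it is a subsum of the direct sum decomposition of $V$.

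To place the $v_i$ inside $W$, I would apply $T^k$ for $k=0,1,\ldots,q-1$ to the expansion of $w$. Since $W$ is $T$-invariant, each vector $T^k w$ belongs to $W$, and using $Tv_i=\lambda_i v_i$ one obtains the linear system
\begin{equation*}
    T^k w=\sum_{i=1}^{q}\lambda_i^k\, v_i,\qquad k=0,1,\ldots,q-1.
\end{equation*}
The coefficient matrix is the Vandermonde matrix $(\lambda_i^k)$, whose determinant equals $\prod_{i<j}(\lambda_j-\lambda_i)\neq 0$ because the eigenvalues $\lambda_1,\ldots,\lambda_q$ are pairwise distinct. Inverting it expresses every $v_i$ as a linear combination of $w,Tw,\ldots,T^{q-1}w$, all of which lie in $W$. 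Hence $v_i\in W\cap V_{\lambda_i}$ for each $i$.

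No substantive obstacle is anticipated; the only conceptual point is the standard observation that $T$-invariance of $W$ upgrades to $p(T)$-invariance for every polynomial $p$, which is exactly what allows the spectral projectors onto the $V_{\lambda_i}$ (classically built as Lagrange polynomials in $T$) to preserve $W$. The Vandermonde formulation above is simply a coordinate-free way of writing this same idea, and it is essentially immediate once the eigenvalues are known to be distinct.
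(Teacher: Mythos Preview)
Your argument is correct and, in fact, more complete than the paper's. The paper's proof decomposes an arbitrary $w\in W$ into its eigenspace components $w_i\in V_{\lambda_i}$ and then asserts ``Clearly, $w_i\in W_i$'' without justification; the remaining lines only verify the (trivial) directness of the sum. In other words, the paper treats the key step---that the spectral projections of $w$ stay in $W$---as obvious.

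Your Vandermonde/Lagrange-interpolation argument actually supplies this missing justification: it shows that each $v_i$ is a polynomial in $T$ applied to $w$, hence lies in $W$ by $T$-invariance. This is the standard and correct way to see why the eigenprojections preserve an invariant subspace. So while both proofs have the same overall architecture (decompose $w$ in the ambient eigenspace decomposition, then argue each component already lies in $W$), your version genuinely earns the crucial inclusion that the paper only asserts.
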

        \begin{proof}
            First of all, note that $W_i\eqdef W \cap V_{\lambda_i}$ is the intersection of two subspaces of $V$, so formula \eqref{W_decomp} is meaningful. To prove the assertion, we need to show that $W_i\cap W_j=\{0\}$ for $i\neq j$ and that $W=W_1\oplus W_2\oplus\cdots\oplus W_k$. Let $w\in W$. To verify the first part, let us assume that $w\in W_i\cap W_j=\{0\}$ for different indices $i$ and $j$. The latter necessarily implies that $w\in E_i\cap E_j=\{0\}$, and eventually that $w=0$ since $\lambda_i\neq \lambda_j$. Next, let us write $w_i$ for the projection onto $V_{\lambda_i}$. Clearly, $w_i\in W_i$, and consequently $w$ can be expressed as $w=w_1+\cdots+w_k$. Lastly, if $w\in W_i\cap \breve{W}$ with
            \begin{equation*}
                \breve{W}\eqdef W_1\oplus W_2\oplus W_{i-1}\oplus W_{i+1}\oplus W_{k},
            \end{equation*}
            then $w=0$ for $W_i\cap W_j=\{0\}$ with $i\neq j$. In conclusion, $W=W_1\oplus \cdots\oplus W_{k}$, as desired. 
        \end{proof}
        \begin{proof}[Proof of Theorem \ref{SymWR}]
            For simplicity's sake, we shall split the argument into several steps.
            
            \noindent\textit{Step 1. We classify points in $\mathcal{H}$.} Our strategy focuses on defining $\delta(X)$ initially for $X=(t,y,x_d,\tau,\eta,\gamma)$ with $x_d$ small, and then extend $\delta(X)$ in a constant way to larger values of $x_d$. That being said, let us assume that $\underline{X}\in\mathcal{H}\cap \mathbb{Y}$. We distinguish two cases, namely when $\underline{X}$ belongs to $\Gamma$ and when it does not. In the latter, we choose $\tilde{\ee}_0(X)$ as predicted by the block structure condition (see Definition \ref{BSC} and Proposition \ref{BSCProp}) and notice that the uniform \Lop condition is fulfilled. As a result, each $\delta_j(X)$, \textemdash which is guaranteed to exist locally by Picard-Lindelöf's theorem\textemdash,  never vanishes in a small neighborhood $\mathcal{V}$ of $\underline{X}$, for $\mat{\Delta}(X)$ never vanishes in $\mathcal{V}\cap \mathbb{Y}$ either. Hence, $\delta(X)$ is nonsingular and it follows that equation \eqref{BoundaryRelation} holds by choosing $q(X)=I_p$ and $m(X)=\dot{b}(X)\delta^{-1}(X)$. The remaining and most interesting case occurs therefore around points $\underline{X}\in \mathbb{Y}\cap \mathcal{H}$ where the \Lop determinant vanishes to the first order. We devote the rest of the proof to examine this situation. 
            
            \noindent\textit{Step 2. We find a suitable basis $\tilde{\ee}(X)$.}  Recall that $a_1(X)$ is smoothly diagonalizable around $\underline{X}$ in view of Proposition \ref{BSC},  meaning that for every $X$ in a neighborhood $\mathcal{V}$ of $\underline{X}$ there exist eigenvalues 
            \begin{equation*}
                {a_{1,1}}(X),\, \cdots , a_{1,n}(X)
            \end{equation*}
            (counted according to their multiplicities) and eigenvectors $e_1(X),\,\cdots,e_{n}(X)$ organized as the columns of a nonsingular matrix $\ee_0(X)$, so that 
            \begin{equation*}
                \ee_0^{-1}(X)a_1(X)\ee_0(X)=
                    \begin{pNiceMatrix}
                        a_{1,1}(X)&0&\cdots &0\\
                        0&a_{1,2}(X)&\cdots &0\\
                        \vdots & \vdots &\ddots &\vdots \\
                        0&0&\cdots &a_{1,n}(X)\\
                    \end{pNiceMatrix}.
            \end{equation*}
            For $X\in \mathcal{V}$, let $K_{\ell}(X)$ be the Krylov space of $\ell(X)$ with respect to $a_1(X)$. Admitting that the first $p$ columns of $\ee_0(X)$ span the stable subspace $\mathbb{E}^{-}(X)$ and that repeated eigenvalues are adjacent,  we explain how to pick a different basis for $\mathbb{E}^{-}(X)$ that interacts nicely with $K_{\ell}(X)$. To this end, let $\mu_{1}(X),\,\cdots,\mu_{q}(X)$ be pairwise different eigenvalues of $a_1(X)$ with multiplicities $\alpha_1,\,\cdots, \alpha_q$. For every $k\in\{1,\,\cdots,q\}$, we can find a positive integer $i_k\leq n$ such that
            \begin{equation*}
                \mu_{k}(X)=a_{1,i_k}(X)=a_{1,i_k+1}(X)=\cdots= a_{1,i_k+\alpha_k-1}(X),
            \end{equation*}
            with associated eigenspace $V_{k}(X)=\mathrm{span}\{{e_{i_k}(X),\,\cdots,e_{i_k+\alpha_k-1}(X)}\}$. With this at hand, Lemma \ref{LemDecomp} suggests that $K_{\ell}(X)$ can be decomposed as
            \begin{equation*}
                K_{\ell}(X)=K_{\ell}(X)\cap V_{1}(X)\oplus\cdots\oplus K_{\ell}(X) \cap V_{q}(X),
            \end{equation*}
            where $K_{\ell}(X)\cap V_{k}(X)$ is trivial for every $V_{k}(X)\subset\mathbb{E}^{+}(X)$, since $K_{\ell}(X)\subseteq\mathbb{E}^{-}(X)$ for every $X\in \mathcal{V}$. If $K_{\ell}(X)\cap V_{k}(X) \neq \{0\}$, we can choose $\tilde{e}_{k}(X) \in K_{\ell}(X)\cap V_{k}(X)$ and use it to replace an existing element in $\ee(X)$ in such a way that the resulting set $\tilde{\ee}_0(X)$ is still a basis of $\C^n$. Thus, if $\mathdutchcal{s}=\mathdutchcal{s}(X)$ is the number of non-zero coefficients from $d_1(X),\,\cdots, d_p(X)$ in \eqref{d-coefficients}, after rearranging components if necessary, the new basis $\tilde{\ee}_0(X)$ consists of eigenvectors of $a_1(X)$ whose first $s$ elements span $K_{\ell}(X)$. To put it differently,
            \begin{align*}
                \C^n&=\mathrm{span}\{{\tilde{e}_1(X),\,\cdots, \tilde{e}_\mathdutchcal{s}(X)}\}\oplus \,\mathrm{span}\{{\tilde{e}_{\mathdutchcal{s}+1}(X),\,\cdots, \tilde{e}_n(X)} \}\\&=K_{\ell}(X)\oplus\mathrm{span}\{{\tilde{e}_{\mathdutchcal{s}+1}(X),\,\cdots, \tilde{e}_n(X)}\}.
            \end{align*}
            Looking ahead to further stages of this proof, it is of primary interest to us that only one value of $\mathdutchcal{s}$ is chosen for the whole neighborhood $\mathcal{V}$. This is generally the case, save possibly when a coefficient $d_i$ vanishes point-wise at $X$ while not being identically zero in $\mathcal{V}$. In these circumstances, $K_\ell(X)\eqdef \mathrm{span}\{{\tilde{e}_1(X),\,\cdots, \tilde{e}_\mathdutchcal{s}(X)}\}$ with $\mathdutchcal{s}$ being the largest value we encounter while $X$ ranges on $\mathcal{V}$ (although $K_{\ell}(X)$ is no longer the smallest subspace containing $\ell(X)$ for every $X\in \mathcal{V}$, this will suffice for our plans). 

            \noindent\textit{Step 3. We define $\delta(X)$.} Having picked an appropriate basis in the previous passage, we can define
            \begin{equation*}
                \delta(X)=\mathrm{diag}(\delta^{-}(X), I_{n-p}),
            \end{equation*}
            with
            \begin{equation}\label{delta_dot}
                \delta^{-}(X) \eqdef\begin{pNiceMatrix}
                    \begin{pmatrix}
                        \delta_{1}^{-}(X)&\hdots&0\\
                        \vdots&\ddots&\vdots\\
                        0&\hdots&\delta_{\mathdutchcal{s}}^{-}(X)\\
                    \end{pmatrix}&      \\
                    &I_{p-\mathdutchcal{s}}
                \end{pNiceMatrix}.
            \end{equation}
            Each $\delta_{i}^{-}(X)$ in \eqref{delta_dot} solves locally the transport equation
            \begin{align}\label{DeltaExt}
                \left\{\begin{aligned}
         		       \partial_{d}\delta_{j}^{-}+\{\delta_{j}^{-},a_{1,j}\}&=0,\\
         		       \delta_{j}^{-}|_{x_d=0}&=\mat{\Delta}, \\
    		    \end{aligned}\right.
            \end{align}  
            whose characteristic curves coincide with the integral curves of the Hamiltonian system  
            \begin{align}\label{bicharacteristics}
                \left\{\begin{aligned}
         		       &\frac{dy_k}{dx_d}=\phantom{-}\frac{\partial a_{1,j}}{\partial \eta_k},\\
         		       &\frac{d\eta_k}{dx_d}=-\frac{\partial a_{1,j}}{\partial y_k}, \\
         		       &(y_0,\cdots, y_{d-1},0,\eta_0,\cdots,\eta_{d-1}, \gamma)\in \mathcal{V}\cap \mathbb{Y},
    		    \end{aligned}\right.
            \end{align}
            provided that we interpret $x_0$ as $t$ and $\eta_0$ as $\tau$. The preceding set of equations does not impose any restriction on $\gamma$, so we are free to complement \eqref{bicharacteristics} with the natural assumption that $d\gamma/dx_d=0$ along the bicharacteristic curves. As before, the existence of such $\delta_{j}^{-}(X)$ in (perhaps a smaller) $\mathcal{V}$ is justified by Picard-Lindelöf's theorem.
            
            \noindent\textit{Step 4. We link $\delta(X)$ and $b^{-}(X)$.} To investigate the behavior of $\delta(X)$ on the boundary $\{x_d=0\}$, we place ourselves at any point $X\in \mathcal{V}\cap\mathbb{Y}$ and take 
            \begin{equation}
                c(X)=\begin{pNiceMatrix}
                    1&0&\cdots &0\\
                    -d_2(X)&1&\cdots &0\\
                    \vdots & \vdots &\ddots &\vdots \\
                    -d_{p}(X)&0&\cdots &1\\
                \end{pNiceMatrix}
            \end{equation} 
            as in the proof of Proposition \ref{STBasis} (with $\tilde{\ee}(X)$ as the underlying basis). In Step 2, we arranged the columns of $\tilde{\ee}(X)$ in a way that all the nonvanishing elements in $d_1, \,\cdots, d_p$ are written in the upper left part of $c(X)$. In other words, $c(X)$ can be seen as a block diagonal matrix 
            \begin{equation*}
                c(X)=\mathrm{diag}{(c_\mathdutchcal{s}(X),I_{p-\mathdutchcal{s}})}
            \end{equation*}
            with
            \begin{equation}
                c_s(X)\eqdef\begin{pNiceMatrix}
                    1&0&\cdots &0\\
                    -d_2(X)&1&\cdots &0\\
                    \vdots & \vdots &\ddots &\vdots \\
                    -d_{\mathdutchcal{s}}(X)&0&\cdots &1\\
                \end{pNiceMatrix}.
            \end{equation} 
            Meanwhile, since $\delta^{-}_{j}(X)=\Delta(X)$ when $X\in \mathcal{V}\cap\mathbb{Y}$ for every $j\in\{1,\,\cdots, \mathdutchcal{s}\}$, 
            \begin{equation*}
                \delta(X)=\mathrm{diag}\left(\mat{\Delta}(X)I_\mathdutchcal{s}, I_{p-\mathdutchcal{s}},I_{n-p}\right).
            \end{equation*}
            
            We make use of $c(X)$ and its properties to define
            \begin{equation*}
                \mathfrak{s}(X)=\mathrm{diag}\left(c(X), I_{n-p}\right),
            \end{equation*}
            which is nonsingular and hence could be regarded as a legitimate change of variables. What is more, $\mathfrak{s}(X)\delta(X)\mathfrak{s}^{-1}(X)$ is the product of commuting diagonal blocks, from which it easily follows that $\delta(X)$ is invariant under conjugation by $\mathfrak{s}(X)$. Let $\upsilon_1(X)$ and $\upsilon_2(X)$ be square matrices of dimensions $\mathdutchcal{s}\times \mathdutchcal{s}$ such that
            \begin{equation*}
                \upsilon_1(X)\eqdef\begin{pmatrix}
                        1&0&\hdots &0\\
                        0&\mat{\Delta}(X) &\hdots&0 \\
                        \vdots&\vdots &\ddots &\vdots\\
                        0&0&\hdots&\mat{\Delta}(X)\\ 
                    \end{pmatrix} \quad\; \textrm{and} \quad\; \upsilon_2(X)\eqdef\begin{pmatrix}
                        \mat{\Delta}(X)&0&\hdots &0\\
                        0&1 &\hdots&0 \\
                        \vdots&\vdots &\ddots &\vdots\\
                        0&0&\hdots&1\\ 
                    \end{pmatrix}.
            \end{equation*}
            Note that $\delta(X)$ can be factored as
            \begin{align}\label{Factorization}
                \delta(X)=\begin{pmatrix}
                    \mat{\Delta}(X)I_{\mathdutchcal{s}}&&\\
                    &I_{p-\mathdutchcal{s}}&\\
                    &&I_{n-p}\\
                \end{pmatrix}=\begin{pmatrix}
                    \upsilon_1(X)\upsilon_2(X)&&\\
                    &I_{p-\mathdutchcal{s}}&\\
                    &&I_{n-p}\\
                \end{pmatrix},
            \end{align}
            so \small
            \begin{align}\label{delta_0b}
                \delta(X) &=\mathfrak{s}(X)\begin{pNiceMatrix}
                    \upsilon_1(X)&&\\
                    &I_{p-\mathdutchcal{s}}&\\
                    &&I_{n-p}\\ 
                \end{pNiceMatrix}\begin{pNiceMatrix}
                    \upsilon_2(X)&&\\
                    &I_{p-\mathdutchcal{s}}&\\
                    &&I_{n-p}\\
                \end{pNiceMatrix}\mathfrak{s}^{-1}(X)\\&=\begin{pNiceMatrix}
                    \begin{pmatrix}
                        c_\mathdutchcal{s}(X)\upsilon_1(X)&\\
                        &I_{p-\mathdutchcal{s}}\\
                    \end{pmatrix}&      \\
                    &I_{n-p}
                \nonumber\end{pNiceMatrix}\begin{pmatrix}
                    \begin{pmatrix}
                        v_2(X)&\\
                        & &I_{p-\mathdutchcal{s}}\\
                    \end{pmatrix}c^{-1}(X)& \\
                    &I_{n-p} 
                \end{pmatrix}\\&=\begin{pNiceMatrix}
                    \begin{pmatrix}
                        c_\mathdutchcal{s}(X)\upsilon_1(X)&\\
                        &I_{p-\mathdutchcal{s}}\\
                    \end{pmatrix}\mathfrak{p}^{-1}(X)&      \\
                    &I_{n-p}
                \nonumber\end{pNiceMatrix}\begin{pmatrix}
                    \mathfrak{p}(X)\begin{pmatrix}
                        \mat{\Delta}(X)& &\\
                        &1 & &\\
                        & &\ddots &\\
                        && &1\\ 
                    \end{pmatrix}c^{-1}(X)& \\
                    &I_{n-p} 
                \end{pmatrix}.
            \end{align}\normalsize
            In this manner, if  
            \begin{equation*}
                q(X)\eqdef \begin{pmatrix}
                        c_\mathdutchcal{s}(X)\upsilon_1(X)&\\
                        &I_{p-\mathdutchcal{s}}\\
                    \end{pmatrix}\mathfrak{p}^{-1}(X),
            \end{equation*}
            Proposition \ref{STBasis} and equation \eqref{delta_0b} enables to conclude that 
            \begin{equation*}
                \delta(X)=\mathrm{diag}\left(q(X)b^{-}(X),I_{n-p}\right).
            \end{equation*}
            To finalize, let 
             \begin{equation*}
                    b^{+}(X)\eqdef\begin{pmatrix}
                        b(X)\tilde{e}_{p+1}(X) &\cdots& b(X)\tilde{e}_{n}(X)
                    \end{pmatrix}.
             \end{equation*}
            If $m(X)\eqdef \begin{pmatrix}
                        I_p &q(X)b^{+}(X)
                    \end{pmatrix}$, then
            \begin{align*}
                q(X)b(X)=\begin{pmatrix}
                        q(X)b^{-}(X) &q(X)b^{+}(X)
                    \end{pmatrix}&=\begin{pmatrix}
                        I_p &q(X)b^{+}(X)
                    \end{pmatrix}\begin{pmatrix}
                        \delta^{-}(X)&\\
                        &I_{n-p}\\
                    \end{pmatrix}\\&=m(X)\delta(X).
            \end{align*}
            \textit{Step 6. $\delta(X)$ degenerates at critical points.} By construction, $\ker{\delta(X)}$ is nontrivial if and only if $\mat{\Delta}(X)=0$, i.e., if and only if $X\in \Gamma$. That $\ker{\delta(X)}$ is $\mathdutchcal{s}-$dimensional is evident from Step 2. 
        \end{proof}
        \begin{Definition}\label{LopWaveOperator}
            A collection of pseudodifferential operators $\Delta_{\gamma}(t,x,D_t,D_y)\in \mathrm{OPS}_{\gamma}^{0}(\R^{1+d}_{+}\times [\gamma_0, +\infty))$ is a \Lop family of operators if $\Delta_{\gamma}\eqdef \Op_{\gamma}(\delta)$ and $\delta(X)\equiv\delta_\gamma(t,x,\tau,\eta)$ satisfies  Theorem \ref{SymWR}.
        \end{Definition}
        
        Before continuing further, we present and prove a crucial lemma that simplifies the zeroth-order terms in Problem \ref{SystemPDO}. Morally, if $\mathcal{E}_{0,\gamma}$ is a pseudodifferential operator whose symbol is the nonsingular matrix $\tilde{\ee}_0(X)$ found in Theorem \ref{SymWR}, we look for a correction of $\mathcal{E}_{0,\gamma}$ by an operator of order $-1$, say $\mathcal{E}_{-1,\gamma}$, for which $D_d+\mathdutchcal{A}_{\gamma}$ is block diagonal up to an error of order $-1$. Specifically, we have:

        \begin{Lemma}[{{Lemma 1, \cite{coulombel_2004}}}]\label{LowerOrderTerms}
            Consider $(P_{\gamma},B_{\gamma})$ as in Problem \eqref{SystemPDO} with $a_1$ and $a_0$ being the first two elements of the asymptotic expansion of $a$. Under the notation of Theorem \ref{SymWR}, we can define a symbol $\ee_{-1}(X)$ on $\mathcal{V}$ such that $\ee_{-1}(X)$ is homogeneous of order $-1$ and 
            \begin{align*}
                &(\tilde{\ee}_0+\ee_{-1})(a_1+a_0) - (\dot{a}_1+\ddot{a}_0)(\tilde{\ee}_0+\ee_{-1}) + D_{d}\tilde{\ee}_0 + \frac{1}{i} \sum_{k=0}^{d-1}\left({\partial_{\eta_k}\tilde{\ee}_0\partial_{x_{k}}\dot{a}_1}-\partial_{\eta_k}\dot{a}_1\partial_{x_k}\tilde{\ee}_0\right) 
            \end{align*}
            is a symbol of order $-1$, where $\ddot{a}_0$ is a block diagonal symbol of order $0$ with blocks having dimensions $\alpha_1, \,\cdots, \alpha_q$ as those of $\dot{a}_1$. 
        \end{Lemma}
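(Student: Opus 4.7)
The strategy is to determine $\ee_{-1}$ and $\ddot{a}_0$ by forcing the degree-$0$ part of the displayed symbol to vanish, working locally in the conic neighborhood $\mathcal{V}$ of $\underline{X}$ supplied by Theorem \ref{SymWR}. The principal (degree-$1$) part of the expression reduces to $\tilde{\ee}_0 a_1 - \dot{a}_1 \tilde{\ee}_0$, which vanishes by the definition of $\dot{a}_1$, so only the degree-$0$ cancellation needs to be engineered and the result will automatically sit in the symbol class $S^{-1}$.

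Collecting the degree-$0$ terms rearranges into a Sylvester-type equation for $\ee_{-1}$ and $\ddot{a}_0$:
\[
\ee_{-1} a_1 - \dot{a}_1 \ee_{-1} = \ddot{a}_0\tilde{\ee}_0 - \tilde{\ee}_0 a_0 - D_d \tilde{\ee}_0 - \frac{1}{i}\sum_{k=0}^{d-1}\bigl(\partial_{\eta_k}\tilde{\ee}_0\,\partial_{x_k}\dot{a}_1 - \partial_{\eta_k}\dot{a}_1\,\partial_{x_k}\tilde{\ee}_0\bigr).
\]
Setting $F := \ee_{-1}\tilde{\ee}_0$ and using $a_1 \tilde{\ee}_0 = \tilde{\ee}_0 \dot{a}_1$, this becomes a commutator equation $F\dot{a}_1 - \dot{a}_1 F = R\tilde{\ee}_0$, where $R$ denotes the right-hand side above. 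Because $\dot{a}_1 = \mathrm{diag}(a_{1,1},\ldots,a_{1,n})$, the matrix equation decouples entry-wise into $F_{ij}(a_{1,j}-a_{1,i}) = (R\tilde{\ee}_0)_{ij}$.

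I would then split according to the partition $n=\alpha_1+\cdots+\alpha_q$ induced by the distinct eigenvalues $\mu_1,\ldots,\mu_q$ of $\dot{a}_1$. On \emph{off-diagonal} blocks ($\mu_k\neq\mu_\ell$) the commutator is invertible and one solves explicitly $F_{ij} = (R\tilde{\ee}_0)_{ij}/(a_{1,j}-a_{1,i})$; the denominator is bounded away from zero on $\mathcal{V}$ by the constant-multiplicity hypothesis of Definition \ref{Hyperbolicity}, so the quotient is a smooth symbol, homogeneous of degree $-1$. On \emph{diagonal} blocks ($\mu_k=\mu_\ell$) the left-hand side vanishes identically, which forces the corresponding blocks of $R\tilde{\ee}_0$ to vanish as well; this is exactly the freedom I use to \emph{define} $\ddot{a}_0$ block by block, namely by letting each diagonal block of $\ddot{a}_0\tilde{\ee}_0$ match the corresponding block of $\tilde{\ee}_0 a_0 + D_d\tilde{\ee}_0 + \tfrac{1}{i}\sum_k(\ldots)$. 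One is free to set the diagonal blocks of $F$ to zero, and $\ee_{-1} := F\tilde{\ee}_0^{-1}$ then has all required properties.

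The main technical point is that both steps must yield symbols that are smooth and appropriately homogeneous on the whole conic neighborhood $\mathcal{V}$, not only pointwise at $\underline{X}$. This is where Proposition \ref{BSCProp} and the constant-multiplicity assumption pay off: the eigenvalue gaps $a_{1,i}-a_{1,j}$ stay uniformly bounded away from zero across blocks, and the eigenprojectors (hence $\tilde{\ee}_0$) are smooth. The block partition inherited from $\dot{a}_1$ is the correct one for the image of the map $F\mapsto [F,\dot{a}_1]$, so no compatibility issue arises in decoupling the diagonal (absorbed into $\ddot{a}_0$) from the off-diagonal (solved by $F$) parts. Homogeneity of degree $-1$ for $\ee_{-1}$ follows because $R\tilde{\ee}_0$ is homogeneous of degree $0$ after cancellation of the degree-$1$ piece, and the denominator $a_{1,j}-a_{1,i}$ is homogeneous of degree $1$.
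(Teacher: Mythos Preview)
Your proposal is correct and follows essentially the same route as the paper: isolate the degree-$0$ part, recast it as a commutator equation with the diagonal $\dot{a}_1$, solve the off-block-diagonal entries by dividing by the eigenvalue gaps (nonvanishing by constant multiplicity), and absorb the block-diagonal remainder into the definition of $\ddot{a}_0$. One cosmetic slip: the convention $\tilde{\ee}_0 a_1 = \dot{a}_1 \tilde{\ee}_0$ you invoke for the degree-$1$ cancellation gives $a_1 = \tilde{\ee}_0^{-1}\dot{a}_1\tilde{\ee}_0$, so the right substitution is $G:=\ee_{-1}\tilde{\ee}_0^{-1}$ (which is exactly what the paper uses) rather than your $F:=\ee_{-1}\tilde{\ee}_0$; this does not affect the argument.
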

        \begin{proof}
            Let $\ee_{-1}$ be a symbol of order $-1$ to be determined. A first-order approximation of $(\tilde{\ee}_0+\ee_{-1})^{-1}$ shows that 
            \begin{align}\label{Parametrix}
                (\tilde{\ee}_0+\ee_{-1})(\tilde{\ee}_0^{-1}-\tilde{\ee}_0^{-1}\ee_{-1}\tilde{\ee}_0^{-1}) = I_n \mod S_{\gamma}^{-1},
            \end{align} 
            so $(\tilde{\ee}_0+\ee_{-1})(\xi_d I_n+a_1+a_0)(\tilde{\ee}_0+\ee_{-1})^{-1}$ can be estimated up to an error of order $-1$ by
            \begin{align*}
                &(\tilde{\ee}_0+\ee_{-1})(\xi_d I_n+a_1+a_0)(\tilde{\ee}_0^{-1}-\tilde{\ee}_0^{-1}\ee_{-1}\tilde{\ee}_0^{-1}) \\
                &\qquad = (\xi_d I_n + \tilde{\ee}_0a_1\tilde{\ee}_0^{-1} - \tilde{\ee}_0a_1 \tilde{\ee}_0^{-1}\ee_{-1}\tilde{\ee}_0^{-1} + \ee_{-1}a_1\tilde{\ee}_0^{-1} + \tilde{\ee}_0a_0 \tilde{\ee}_0^{-1}) \mod S_{\gamma}^{-1}.
            \end{align*} 
            Since $\tilde{\ee}_0a_1\tilde{\ee}_0^{-1} = \dot{a}_1$, it is true that
            \begin{align*}
                &\tilde{\ee}_0a_1 \tilde{\ee}_0^{-1}\ee_{-1}\tilde{\ee}_0^{-1} - \ee_{-1}a_1\tilde{\ee}_0^{-1} = \dot{a}_1 \ee_{-1}\tilde{\ee}_0^{-1} - \ee_{-1}\tilde{\ee}_0^{-1}\dot{a}_1 = [\dot{a}_1,\ee_{-1}\tilde{\ee}_0^{-1}],
            \end{align*} 
            and consequently
            \begin{align*}
                (\tilde{\ee}_0+\ee_{-1})(\xi_d I_n+a_1+a_0)(\tilde{\ee}_0^{-1}-\tilde{\ee}_0^{-1}\ee_{-1}\tilde{\ee}_0^{-1}) = \xi_d I_n + \dot{a}_1 - [\dot{a}_1,\ee_{-1}\tilde{\ee}_0^{-1}]+\tilde{\ee}_0a_0 \tilde{\ee}_0^{-1}
            \end{align*} 
            modulo $S_{\gamma}^{-1}$. As we always do with involved computations, we shall omit the parameter $\gamma$ to facilitate the exposition. That being so, let $\dot{A}_1\eqdef\Op(\dot{a}_1)$ and $\ddot{A}_0\eqdef\Op(\ddot{a}_0)$. We now utilize the usual symbolic calculus on the operator equation 
            \begin{align}\label{CompoOp}
                (\mathcal{E}_{0}+\mathcal{E}_{-1})(D_{d} + \mathdutchcal{A}) - (D_{d}+(\dot{A}_1+\dot{A}_0))(\mathcal{E}_{0} + \mathcal{E}_{-1}) = 0 \mod \Psi_{-1}
            \end{align} 
            to derive precise conditions on $\ddot{a}_0$. As a matter of fact, a first-order expansion of the symbol of \eqref{CompoOp} produces
            \begin{align}\label{expansion}
                \nonumber(\tilde{\ee}_0+\ee_{-1})(a_1+a_0) &- (\dot{a}_1+\ddot{a}_0)(\tilde{\ee}_0+\ee_{-1})\\
                &+D_{d} \tilde{\ee}_0+\frac{1}{i}\sum_{k=0}^{d-1}\left({\partial_{\eta_k}\tilde{\ee}_0\partial_{x_{k}}a_1}-\partial_{\eta_k}\dot{a}_1\partial_{x_k}\tilde{\ee}_0\right)=0,
            \end{align} 
            or more concisely,
            \begin{align}\label{IdentOne}
            -[\dot{a}_1,\ee_{-1}\tilde{\ee}_0^{-1}]\tilde{\ee}_0+\tilde{\ee}_0a_0-\ddot{a}_0\tilde{\ee}_0+D_{d} \tilde{\ee}_0+\frac{1}{i}\sum_{k=0}^{d-1}\left({\partial_{\eta_k}\tilde{\ee}_0\partial_{x_{k}}a_1}-\partial_{\eta_k}\dot{a}_1\partial_{x_k}\tilde{\ee}_0\right)=0 \hspace{-0.1cm} \mod \Psi_{-1}.
            \end{align} 
            Alternatively, if we multiply \eqref{IdentOne} from the right by $\tilde{\ee}_0^{-1}$ and put $\dot{a}_0=e_0a_0e^{-1}_0$, then
            \begin{align*}
            \nonumber\ddot{a}_0=-[\dot{a}_1,\ee_{-1}\tilde{\ee}_0^{-1}]+\dot{a}_0+(D_{d} \tilde{\ee}_0)\tilde{\ee}_0^{-1}+\frac{1}{i}\sum_{k=0}^{d-1}\left({\partial_{\eta_k}\tilde{\ee}_0\partial_{x_{k}}a_1}-\partial_{\eta_k}\dot{a}_1\partial_{x_k}\tilde{\ee}_0\right)\tilde{\ee}_0^{-1} \quad \mod \Psi_{-1}.
            \end{align*} 
            In general, there is no reason to expect that $\ddot{a}_0$ above is block diagonal. Yet, we can choose the off-diagonal entries of $[\dot{a}_1,\ee_{-1}\tilde{\ee}_0^{-1}]$ (it is worth remembering that $[\dot{a}_1,\ee_{-1}\tilde{\ee}_0^{-1}]$ has zero diagonal) to compensate those of
            \begin{align*}
                \dot{a}_0+(D_{d} \tilde{\ee}_0)\tilde{\ee}_0^{-1}+\frac{1}{i}\sum_{k=0}^{d-1}\left({\partial_{\eta_k}\tilde{\ee}_0\partial_{x_{k}}a_1}-\partial_{\eta_k}\dot{a}_1\partial_{x_k}\tilde{\ee}_0\right)\tilde{\ee}_0^{-1}.
            \end{align*} 
            As all the terms we have neglected so far are of lower order, the operator $D_{d} + \dot{A}_1 + \ddot{A}_0$ is a block diagonalization of $D_{d} + \mathdutchcal{A}_{\gamma}$ modulo an error of order $-1$. 
        \end{proof}   
        We continue with the main result of this paper, which we recall here for the benefit of the reader.
        \begin{customthm}{1.1}
            Let
            \begin{equation}\label{PseudoBVP}
            \left\{\begin{aligned}
                     P_{\gamma}u_{\gamma}(t,x)\eqdef\left(D_{d}+\mathdutchcal{A}_{\gamma}(t,y,x_d,D_t,D_y)\right)u(t,x)&=f(t,x) &\quad &\quad (t,x)\in \R^{1+d}_{+},\\
                     B_{\gamma}(t,y)u(t,y)|_{x_d=0}&=g(t,y) &\quad &\quad (t,y)\in \R^{d},
                \end{aligned}\right.
            \end{equation}
            where  $\mathdutchcal{A}_{\gamma} \in \mathrm{OPS}_\gamma^{1}(\R^{1+d}_{+}\times [1, +\infty))$ and $B_{\gamma}\in \mathrm{OPS}_\gamma^{0}(\R^{d}\times [1, +\infty))$ are classical pseudodifferential operators with matrix-valued symbols $a(X)$ and $b(X)$ of dimensions $n\times n$ and $p\times n$, respectively. Assume that $P_{\gamma}$ is hyperbolic in the sense of Definition \ref{Hyperbolicity}, $P_{\gamma}$ and $B_{\gamma}$ satisfy Property \textbf{(C)}, and $p=\dim\mathbb{E}^{-}(X)$. Then there exist
            \begin{enumerate}[label=\normalfont(\roman*)]
               \item $\gamma_0\geq 1$,
               \item a family of pseudodifferential operators $\Delta_{\gamma}(t,x,D_t,D_y)\in\mathrm{OPS}_\gamma^{0}(\R^{1+d}_{+}\times [\gamma_0, +\infty))$,
               \item function spaces 
                   \begin{align*}
                      L^2_{\Delta}&\eqdef\{v \in \mathcal{S}'(\R^{1+d}_+,\,\R^n): \Delta_{\gamma}v \in L_{\gamma}^2(\R^{1+d}_+,\,\C^n)\},\\
                      H^s_{\Delta}&\eqdef\{v \in \mathcal{S}'(\R^{1+d}_+,\,\R^n): \Lambda^s_{\gamma} v \in L_{\Delta}^2(\R^{1+d}_+,\,\C^n)\},
                   \end{align*} 
               \item and a positive constant $C$ such that,
            \end{enumerate}
            if $f\in L_{\gamma}^2(\R^{1+d}_{+})$ and $g\in L_{\gamma}^{2}(\R^{d})$, then for all $\gamma\geq \gamma_0$ and every $u\in \mathcal{D}(\R^{1+d}_{+})$ the following estimate holds
            \begin{align}
                \gamma\Vert{\Delta_{\gamma} u }\Vert_{0,\gamma}^2+\vert{\Delta_{\gamma} u(0)}\vert_{0,\gamma}^2&\leq C\left(\frac{1}{\gamma}\Vert{f}\Vert_{0,\gamma}^2+\vert{g}\vert_{0,\gamma}^2\right).
            \end{align}
            More generally, if $f\in H_{\gamma}^s(\R^{1+d}_{+})$ and $g\in H_{\gamma}^{s}(\R^{d})$,
            \begin{align}
                \gamma\Vert{\Delta_{\gamma} u }\Vert_{s,\gamma}^2+\vert{\Delta_{\gamma} u(0)}\vert_{s,\gamma}^2&\leq C\left(\frac{1}{\gamma}\Vert{f }\Vert_{s,\gamma}^2+\vert{g}\vert_{s,\gamma}^2\right).
            \end{align}
            
            Suppose, in addition, that the nullity of the principal symbol of $\Delta_{\gamma}$ is independent of $X\in \mathbb{X}_{S}$. If $f\in L_{\Delta}^2(\R^{1+d}_{+})$ and $g\in L_{\Delta}^{2}(\R^{d})$, there exists a pseudodifferential operator $Y_{\gamma}(t,y,D_t,D_y)\in \mathrm{OPS}_\gamma^{0}(\R^{d}_{+}\times [\gamma_0, +\infty))$ so that, for all $\gamma\geq \gamma_0$ and every $u\in \mathcal{D}(\R^{1+d}_{+})$,
            \begin{align}
                \gamma\Vert{\Delta_{\gamma} u }\Vert_{0,\gamma}^2+\vert{\Delta_{\gamma} u(0)}\vert_{0,\gamma}^2&\leq C\left(\frac{1}{\gamma}\Vert{\Delta_{\gamma}f}\Vert_{0,\gamma}^2+\vert{\Delta_{\gamma}Y_{\gamma}g}\vert_{0,\gamma}^2\right).
            \end{align}
            More generally, if $f\in H_{\Delta}^s(\R^{1+d}_{+})$ and $g\in H_{\Delta}^{s}(\R^{d})$,
            \begin{align}
                \gamma\Vert{\Delta_{\gamma} u }\Vert_{s,\gamma}^2+\vert{\Delta_{\gamma} u(0)}\vert_{s,\gamma}^2&\leq C\left(\frac{1}{\gamma}\Vert{\Delta_{\gamma}f }\Vert_{s,\gamma}^2+\vert{\Delta_{\gamma}Y_{\gamma}g}\vert_{s,\gamma}^2\right).
            \end{align}
        \end{customthm}
        \begin{proof}
            In the interest of not overloading the notation, we adhere to the following conventions:
            \begin{itemize}[label=$\circ$]
                \item We shall suppress the parameter $\gamma$ all through the calculations, except when its presence is relevant to the point being made (e.g. when we wish to emphasize the existence of a parameter-dependent family of pseudodifferential operators).
                \item $\Psi_{m}$ represents an error of order $m$ that may be different from line to line.
                \item When it comes to norms, we shall write $\Vert{\,\cdot\,}\Vert_{s,\gamma}=\Vert{\,\cdot\,}\Vert_s$ or $\Vert{\,\cdot\,}\Vert_{0,\gamma}=\Vert{\,\cdot\,}\Vert$ when $s=0$ (resp. $\vert{\,\cdot\,}\vert_{s,\gamma}=\vert{\,\cdot\,}\vert_s$ or $\vert{\,\cdot\,}\vert_{0,\gamma}=\vert{\,\cdot\,}\vert$ when $s=0$).
                \item We shall adopt $\mathdutchcal{A}_{\gamma}(x_d)$ and $\Delta_{\gamma}(x_d)$ as a substitute for
                \begin{equation*}
                    \mathdutchcal{A}_{\gamma}(t,y,x_d,D_t, D_y) \qquad \mathrm{and} \qquad \Delta_{\gamma}(t,y,x_d,D_t, D_y).
                \end{equation*}
            \end{itemize}
        
            Once again, we shall divide the analysis into steps for ease of explanation. 
            
            \textit{Step 1. We pick a pseudodifferential partition of unity}. Thanks to homogeneity and Property $\textbf{(C)}$, we can restrict our attention to a compact region $K\times S^d\subset\mathbb{X}_S$,  which we may cover with finitely many neighborhoods $\{\mathcal{V}_i\}_{i\in I}$ as shown in Theorem \ref{SymWR}. Let $\{\varphi_i\}_{i\in I}$ and $\{\theta_i\}_{i\in I}$ be systems of functions such that 
            \begin{enumerate}[label=\normalfont(\roman*)]
                \item $\varphi_i, \theta_i \in C_c(\mathcal{V}_i)$, $0\leq \varphi_i \leq 1, \; 0\leq \theta_i \leq 1$, 
                \item $\sum_{i\in I}\varphi^2_i\equiv 1$,
                \item $\theta_i\equiv 1$ in a vicinity of $\supp\varphi_i$.
            \end{enumerate}
            In addition, let us assume that $\Theta_i$ and $\Phi_i$ are pseudodifferential operators whose symbols are the extensions of $\theta_i$ and $\varphi_i$ to homogeneous functions of degree $0$ in $\zeta=(\tau, \eta, \gamma)$. 
            
            \textit{Step 2. We perform a change of variables in Problem \ref{PseudoBVP}}. To begin with, let us fix $\mathcal{V}\equiv\mathcal{V}_i$, $\theta\equiv\theta_i$, and take $\tilde{\ee}_0\equiv \tilde{\ee}_{0,i}$ as in Theorem \ref{SymWR}. Putting $\mathcal{E}_{0}\eqdef\Op{(\theta\tilde{\ee}_0)}$ and  $\mathcal{E}^{-1}_{0}\eqdef\Op{(\theta\tilde{\ee}_0^{-1})}$, it is readily verified that $\mathcal{E}_{0}\mathcal{E}^{-1}_{0}=I_n \mod\,\Psi_{-1}$, which justifies the abuse of notation in writing $\mathcal{E}^{-1}_{0}$ to refer to $\Op{(\theta\tilde{\ee}_0^{-1})}$ (in rigour, only the inverse of $\mathcal{E}_{0} \mod\,\Psi_{-1}$).  Applying $\mathcal{E}^{-1}_0$ on both sides of $(D_d+\mathdutchcal{A})u=f$ bring us to
            \begin{align}
               \mathcal{E}^{-1}_0D_{d}u+\mathcal{E}^{-1}_0\mathdutchcal{A}u=\mathcal{E}^{-1}_0f,
           \end{align}
           or alternatively to  
           \begin{align}\label{Pre-P}
               D_{d}\mathcal{E}^{-1}_0u+\mathcal{E}^{-1}_0\mathdutchcal{A}u+[\mathcal{E}^{-1}_0,D_d]u=\mathcal{E}^{-1}_0f. 
           \end{align}
           The equivalence 
           \begin{equation*}
               \mathcal{E}^{-1}_0\mathdutchcal{A}u=\mathcal{E}^{-1}_0\mathdutchcal{A}\mathcal{E}_0\mathcal{E}^{-1}_0u+\mathcal{E}^{-1}_0\mathdutchcal{A}\Psi_{-1}u=\mathcal{E}^{-1}_0\mathdutchcal{A}\mathcal{E}_0\mathcal{E}^{-1}_0u+\mathcal{E}^{-1}_0\mathdutchcal{A}\Psi_{-1}\mathcal{E}_0\mathcal{E}^{-1}_0u+\mathcal{E}^{-1}_0\mathdutchcal{A}\Psi_{-2}u           
           \end{equation*}
           modulo an error of order $-1$ let us recast \eqref{Pre-P} succinctly as
           \begin{equation*}
               (D_d+\dot{\mathdutchcal{A}}+E_0)\dot{u}=\dot{f} \qquad \mod\,\Psi_{-1},
           \end{equation*}
           with $\dot{u}\eqdef\mathcal{E}^{-1}_0u$, $\dot{\mathdutchcal{A}}\eqdef\mathcal{E}^{-1}_0\mathdutchcal{A}\mathcal{E}_0$, $E_0\eqdef[\mathcal{E}^{-1}_0,D_d]\mathcal{E}_0 +\mathcal{E}^{-1}_0\mathdutchcal{A}\Psi_{-1}\mathcal{E}_0$, and $\dot{f}\eqdef\mathcal{E}^{-1}_0f$. If we think of $E_0$ as part of $\dot{\mathdutchcal{A}}$, 
           Lemma \ref{LowerOrderTerms} implies the existence of a refined basis $\mathcal{E}=\mathcal{E}_{0}+\mathcal{E}_{-1}$ with respect to which $D_d+\underline{\mathdutchcal{A}}$ is a block diagonalization of $D_d+\dot{\mathdutchcal{A}}+E_0$ modulo $\Psi_{-1}$. In the same vein, if $\dot{B}\eqdef B\mathcal{E}$, we notice that $\dot{B}\mathcal{E}^{-1}\eqdef (B\mathcal{E})\mathcal{E}^{-1}$ differs from $B$ by an error of order $-1$. The resulting boundary problem is
           \begin{equation}\label{dot_u}
            \left\{\begin{aligned}
                \underline{P}\dot{u}(t,x)\eqdef\left(D_{d}+\underline{\mathdutchcal{A}}\right)\dot{u}(t,x)&=\dot{f}(t,x) \\
                \dot{B}\dot{u}(t,y)&=\dot{g}(t,y),
            \end{aligned}\right.
            \end{equation}
            with $\dot{g}\equiv g$, and $\underline{\mathdutchcal{A}}$ being a classical pseudodifferential operator with symbol 
            \begin{equation*}
                \underline{a}\sim \dot{a}_1+\ddot{a}_0+\cdots,
            \end{equation*}
            where $\dot{a}_1$ and $\ddot{a}_0$ are block diagonal.
           
           \textit{Step 3. We localize $u$ by means of $\underline{\Phi}_i\eqdef \Phi_i^*\Phi_i$}. To do so, we fix $i$ such that $\underline{\Phi}\equiv\underline{\Phi}_i$ and observe that the commutator relations
           \begin{align*}
               (D_d+\underline{\mathdutchcal{A}})\underline{\Phi} &=\underline{\Phi}(D_d+\underline{\mathdutchcal{A}})+[(D_d+\underline{\mathdutchcal{A}}), \underline{\Phi}],\\
               \dot{B}\underline{\Phi} &=\underline{\Phi} \dot{B}+[\dot{B},\underline{\Phi}],
           \end{align*}
           enable  us to formulate \eqref{dot_u} in terms of $\tilde{u}\eqdef\underline{\Phi} \dot{u}$ at the expense of a zeroth-order term $[\underline{P}\,, \underline{\Phi}]$ and a harmless error $[\dot{B},\underline{\Phi}]$ of order $-1$ to be analyzed shortly. Thus, we are left with 
           \begin{equation}\label{ProblemAfterDiag}
            \left\{\begin{aligned}
                \underline{P}\tilde{u}=\left(D_{d}+\underline{\mathdutchcal{A}}\right)\tilde{u}(t,x)&=\tilde{f}(t,x) \\
                \dot{B}\tilde{u}(t,y)&=\tilde{g}(t,y),
            \end{aligned}\right.
            \end{equation}
            where the $\smallsim$ everywhere refers to the application of the operator $\underline{\Phi}$. In this setting, we can define pseudodifferential operators $Q\eqdef \Op(\theta q)$, $\Delta\eqdef \Op(\theta\delta)$, and $M\eqdef \Op(\theta m)$ with $q, \delta$, and $m$ as in Theorem \ref{SymWR}, and use them to set the auxiliary system 
            \begin{equation}\label{StrongSysWR}
                \left\{\begin{aligned}
                    \left(D_{d}+\underline{\mathdutchcal{A}}\right)\tilde{w}(t,x)&=\Delta \tilde{f}(t,x), \\
                    M\tilde{w}(t,y)&={Q}\tilde{g}(t,y).
                \end{aligned}\right.
           \end{equation}
           Interestingly, \eqref{StrongSysWR} satisfies the uniform \Lop condition, for $m(X)$ restricted to the stable subspace of $\dot{a}_1$ is the identity $I_p$ (see Step 5 in Theorem \ref{SymWR}). Hence, from the standard theory of linear hyperbolic boundary value problems (see Chapter 7 in \cite{Chazarain2011} or Chapter 4 in \cite{benzoni-gavage_serre_2007}), it is possible to find a (strong) symmetrizer with the features described in Definition \ref{SFS}, for which we momentarily reintegrate $\gamma$ and $x_d$ to leave no room for ambiguity. 
           \begin{Definition}\label{SFS}
               A (strong) symmetrizer for \eqref{StrongSysWR} is a family $R_{\gamma}$ of $C^1$ operator-valued maps parameterized by $x_d$ so that, for $\gamma\geq \gamma_0\geq 1$,
               \begin{enumerate}[label=\roman*)]
                    \item \label{SFS2} $R_{\gamma}(x_d)$ and $\partial_d R_{\gamma}(x_d)$ are $L^2-$bounded operators with uniform bounds in $x_d$ and $\gamma$.
                    \item \label{SFS1} $R_{\gamma}(x_d)$ is self-adjoint.
                    \item \label{SFS3} There is a positive constant $c$, independent of $x_d$ and $\gamma$, such that
                    \begin{equation*}
                        \ima{\langle{R_{\gamma}(x_d)\underline{\mathdutchcal{A}}_{\gamma}(x_d)v,v}\rangle}\geq c\gamma \vert{v}\vert_{0,\gamma}^2
                    \end{equation*}
                    for every $v\in L^2(\R_+^{1+d})$.
                    \item \label{SFS4} There exist positive constants $\alpha$ and $\beta$ so that
                    \begin{equation*}
                        \langle{R_{\gamma}(0)v,v}\rangle\geq \alpha\vert{v}\vert_{0,\gamma}^2-\beta \vert{M_{\gamma}v}\vert_{0,\gamma}^2
                    \end{equation*}
                    is valid for each $v\in L^2(\R^d)$.
                \end{enumerate}
           \end{Definition}
           As we shall soon confirm, $\Delta_{\gamma}(x_d)$ together with $R_{\gamma}(x_d)$ pave the way for the rest of the proof.
           
           \textit{Step 4. We establish the existence of a $\WR$ symmetrizer}.  We introduce the spaces
           \begin{align*}
              L^2_{\Delta}(\R^{1+d}_{+})&\eqdef\{v \in \mathcal{S}'(\R^{1+d}_+,\,\R^n): \Delta_{\gamma}v \in L_{\gamma}^2(\R^{1+d}_+,\,\C^n)\},\\
              H^s_{\Delta}(\R^{1+d}_{+})&\eqdef\{v \in \mathcal{S}'(\R^{1+d}_+,\,\R^n): \Lambda^s_{\gamma} v \in L_{\Delta}^s(\R^{1+d}_+,\,\C^n)\},
           \end{align*} 
           and the next definition. 
           \begin{Definition}\label{WRFS}
               A $\WR$ symmetrizer for Problem \eqref{ProblemAfterDiag} is a family of pseudodifferential operators $\Sigma_{\gamma}\in \mathrm{OPS}_{\gamma}^{0}(\R^{1+d}_{+}\times [1, +\infty))$ for which there exists $\gamma_0\geq 1$ such that, for all $\gamma\geq \gamma_0$,
               \begin{enumerate}[label=\roman*)]
                    \item \label{SigmaSelfAdjoint} $\Sigma_{\gamma}(x_d)$ is self-adjoint,
                    \item  \label{WRFS0} for every $v_1,v_2\in L^2_{\Delta}$, there is a positive constant $C$ satisfying 
                    \begin{equation*}
                        \langle{\Sigma_{\gamma}(x_d) v_1,v_2}\rangle\leq C\vert{\Delta_{\gamma}(x_d) v_1}\vert\vert{\Delta_{\gamma}(x_d) v_2}\vert,
                    \end{equation*}
                    \item \label{WRFS1} there is a positive constant $c$, independent of $x_d$, so that
                    \begin{equation*}
                        \langle{\partial_d \Sigma_{\gamma}(x_d) v,v}\rangle+2\ima{\langle\Sigma_{\gamma}(x_d)\underline{\mathdutchcal{A}}_{\gamma}(x_d)v,v\rangle}\geq c\gamma \vert{\Delta_{\gamma}(x_d) v}\vert^2
                    \end{equation*}
                    for each $v\in L^2_{\Delta}(\R^{1+d}_{+})$,
                    \item \label{WRFS2} there exist positive constants $\alpha$ and $\beta$ for which
                    \begin{equation*}
                        \langle{\Sigma_{\gamma}(0)v,v}\rangle\geq \alpha\vert{\Delta_{\gamma}(0) v}\vert^2-\beta \vert{Q_{\gamma} \dot{B}_{\gamma}v}\vert^2
                    \end{equation*}
                    holds true for every $v\in L^2_{\Delta}(\R^d)$.
                \end{enumerate}
            \end{Definition}
            Let us drop $x_d$ and $\gamma$ once more to make the idea smoother. We claim that ${\Sigma}={\Delta}^{*}R{\Delta}$ meets (\ref{SigmaSelfAdjoint} to (\ref{WRFS2} in Definition \ref{WRFS}. That $\Sigma$ is self-adjoint follows directly from the self-adjointness of $R$. Condition (\ref{WRFS0}, on the other hand, stems from the elementary computation
            \begin{equation*}
                \langle{\Sigma v_1, v_2}\rangle=\langle{\Delta^{*}R\Delta v_1, v_2}\rangle=\langle{R\Delta v_1, \Delta v_2}\rangle\leq C \vert{\Delta v_1}\vert\vert{\Delta v_2}\vert,
            \end{equation*}
           valid for some constant $C>0$ and test functions $v_1, v_2$ supported on $\mathcal{V}$. The remaining properties can be obtained from those of $R$ and $\Delta$ as explained hereafter. Firstly,
           \begin{align*}
                \langle{\partial_d{\Sigma}v,v}\rangle+2\ima{\langle{{\Sigma} \underline{A}v,v}\rangle}&=\langle{(\partial_d R)\Delta v,\Delta v}\rangle+2\real{\langle{R(\partial_d\Delta)v,\Delta v}\rangle}+2\ima{\langle{R\Delta \underline{A}v,\Delta v}\rangle}\\&=\langle{(\partial_d R)\Delta v,\Delta v}\rangle+2\real{\langle{R(\Op{(\partial_d\delta)})v,\Delta v}\rangle}+2\ima{\langle{R\Delta \underline{A}v,\Delta v}\rangle}.
            \end{align*}
            Secondly, 
            \begin{align}\label{SymProp_2}
                \nonumber\Delta \underline{A}&=\underline{A}\Delta +[\Delta,\underline{A}]\\&=\underline{A}\Delta+\Op([\delta,\dot{a}_1]+[\delta,\ddot{a}_0]-i\{\dot{a}_1,\delta\})+\Psi_{-1}=\underline{A}\Delta-i\Op(\{\dot{a}_1,\delta\}) + \Psi_{-1},
            \end{align}
            given that $[\delta,\dot{a}_1]$ and $[\delta,\ddot{a}_0]$ vanish identically in light of Lemma \ref{LowerOrderTerms}. Inserting \eqref{SymProp_2} into $2\ima{\langle{R\Delta \underline{A}v,\Delta v}\rangle}$ gives
            \begin{align*}
                2\ima{\langle{R\Delta \underline{A}v,\Delta v}\rangle}&= 2\ima{\langle{R\underline{A}\Delta v,\Delta v}\rangle}+2\ima{\langle{R[\Delta, \underline{A}] v,\Delta v}\rangle}\\
                &= 2\ima{\langle{R\underline{A}\Delta v,\Delta v}\rangle}-2\real{\langle{R\Op(\{\dot{a}_1,\delta\})v,\Delta v}\rangle} +\langle{\Psi_{-1}v,\Delta v}\rangle,\\
                &= 2\ima{\langle{R\underline{A}\Delta v,\Delta v}\rangle}+2\real{\langle{R\Op(\{\delta, \dot{a}_1\})v,\Delta v}\rangle} +\langle{\Psi_{-1}v,\Delta v}\rangle,
            \end{align*}
            and eventually, 
            \begin{align}\label{SymProp_3}
                \langle{\partial_d{\Sigma}v,v}\rangle+2\ima{\langle{{\Sigma} \underline{A}v,v}\rangle}=\langle{(\partial_d R)\Delta v,\Delta v}\rangle&+2\ima{\langle{R\underline{A}\Delta v,\Delta v}\rangle}\\\nonumber&+2\real{\langle{R\Op(\partial_d\delta+\{\delta, \dot{a}_1\})v,\Delta v}\rangle},
            \end{align}
            modulo a negligible error $\langle{\Psi_{-1}v,\Delta v}\rangle$ (to be seen!). Now, recalling that $R$ is a strong functional symmetrizer, the second term in \eqref{SymProp_3} obeys the inequality
            \begin{equation*}
                2\ima{\left(R\underline{A}\Delta v,\Delta v\right)}\geq C\gamma \vert{\Delta v}\vert^2,
            \end{equation*}
            whereas the last bracket is null because $\partial_d\delta+\{\delta, \dot{a}_1\}=0$ by construction. 
            Finally, combining the identity $M\Delta=Q\dot{B} \mod\,\Psi_{-1}$ from Theorem \ref{SymWR} and Condition (\ref{SFS4} in Definition \ref{SFS} yields 
            \begin{equation*}
                \langle{\Sigma(0)v(0),v(0)}\rangle\geq \alpha\vert{\Delta(0) v(0)}\vert^2-\beta \vert{M \Delta v(0)}\vert^2=\alpha\vert{\Delta(0) v(0)}\vert^2-\beta \vert{Q\dot{B}v(0)}\vert^2-\vert{\Psi_{-1}v(0)}\vert^2,
            \end{equation*}
            for some positive constants $\alpha, \beta$, and a perturbation $\vert{\Psi_{-1}v(0)}\vert^2$ to be absorbed.
                
            \textit{Step 5. We deduce energy estimates for $\tilde{u}$ via $\Sigma$}. We proceed analogously as in the model case:
            \begin{align}
                \nonumber\frac{d}{dx_d}\langle{\Sigma \tilde{u},\tilde{u}}\rangle&=\langle{\partial_{d}\Sigma \tilde{u}, \tilde{u}}\rangle+\langle{\Sigma\partial_{d}\tilde{u},\tilde{u}}\rangle+\langle{\Sigma \tilde{u},\partial_{d}\tilde{u}}\rangle\\
                \nonumber&=\langle{\partial_{d}\Sigma \tilde{u}, \tilde{u}}\rangle+2\real{\langle{\Sigma iD_{d}\tilde{u},\tilde{u}}\rangle}\\
                \nonumber&=\langle{\partial_{d}\Sigma  \tilde{u},\tilde{u}}\rangle+2\real{\langle\Sigma i(\tilde{f}-\underline{A}\tilde{u}),\tilde{u}\rangle}\\
                \nonumber&=\langle{\partial_{d}\Sigma  \tilde{u},\tilde{u}}\rangle+2\ima{\langle{\Sigma \underline{A}\tilde{u},\tilde{u}}\rangle}-2\ima{\langle{\Sigma \tilde{f},\tilde{u}}\rangle}.
            \end{align}
            Integrating in $x_d$ over $[0,\infty)$ and bearing in mind that $\tilde{u}$ vanishes at infinity leads to
            \begin{align*}
                \langle{\Sigma \tilde{u}, \tilde{u}}\rangle|_{x_d=0}=&-\int_{0}^{\infty}{\left(\langle{\partial_{d}\Sigma  \tilde{u},\tilde{u}}\rangle+2\ima{\langle{\Sigma \underline{A}\tilde{u},\tilde{u}}\rangle}\right)}\,dx_d+2\int_{0}^{\infty}{\ima{\langle{\Sigma \tilde{f},\tilde{u}}\rangle}}\,dx_d.
            \end{align*}
            From Condition (\ref{WRFS0} in Definition \ref{WRFS}, it is clear that
            \begin{align*}
                2\ima{\langle\Sigma \tilde{f},\tilde{u}\rangle}&\leq 2|\langle\Sigma \tilde{f},\tilde{u}\rangle|\leq C_1 \vert{\Delta \tilde{f}}\vert\vert{\Delta \tilde{u}}\vert,
            \end{align*}
            while from Part (\ref{WRFS1} in Definition \ref{WRFS}, 
            \begin{align*}
                \langle{\Sigma \tilde{u}, \tilde{u}}\rangle|_{x_d=0}&\leq -c\gamma\int_{0}^{\infty}{\vert{\Delta \tilde{u}}\vert^2}\,dx_d+C_1\int_{0}^{\infty}{\vert{\Delta \tilde{f}}\vert\vert{\Delta \tilde{u}}\vert}\,dx_d.
            \end{align*}
            Meanwhile, if we use (\ref{WRFS2} in Definition \ref{WRFS} plus Young's inequality, we arrive at the conclusion that
            \begin{align*}
                \alpha|\Delta \tilde{u}(0)|^2-\beta|Q\dot{B}u(0)|^2&\leq  \left(-c\gamma+\varepsilon\gamma\right)\int_{0}^{\infty}{\vert{\Delta \tilde{u}}\vert^2}\,dx_d+\frac{C_1}{4\varepsilon\gamma}\int_{0}^{\infty}{\vert{\Delta \tilde{f}}\vert^2}\,dx_d
            \end{align*}
            for all $\varepsilon>0$. By making the parameter $\varepsilon$ small enough, we infer that
            \begin{align*}
                \alpha|\Delta \tilde{u}(0)|^2-\beta|Q\dot{B}\tilde{u}(0)|^2&\leq  -C_2\gamma \int_{0}^{\infty}{\vert{\Delta \tilde{u}}\vert^2}\,dx_d+\frac{C_3}{\gamma}\int_{0}^{\infty}{\vert{\Delta \tilde{f}}\vert^2}\,dx_d,
            \end{align*}
            and then by rescaling constants if necessary, 
            \begin{align}\label{EEu_i}
                \gamma\int_{\R^{1+d}_{+}}{e^{-2\gamma t}\vert{\Delta \tilde{u}}\vert^2}\,dt\, dx &+\int_{\R^{d}}{e^{-2\gamma t}\vert{\Delta \tilde{u}(0)}\vert^2}\,dt\, dy 
               \\\nonumber&\leq C\left(\frac{1}{\gamma}\int_{\R^{1+d}_{+}}{e^{-2\gamma t}\vert{\Delta \tilde{f}}\vert^2}\,dt\, dx \right. \left. +\int_{\R^{d}}{e^{-2\gamma t}\vert{Q\dot{B}\tilde{u}(0)}\vert^2\,dt\, dy}\right)
            \end{align}
            for some $C>0$.
            
            \textit{Step 6. We embed $L^2_{\Delta}$ into $H^{s}_{\gamma}$}. More precisely:
            \begin{customproposition}{1.1}\label{DeltaHminus}
                The norms $\Vert{\Delta_{\gamma}\,\cdot\,}\Vert_{0,\gamma}$ lie between $L_{\gamma}^2$ and $H_{\gamma}^{-1}$ for a sufficiently large $\gamma$, that is to say, for $\gamma_0\geq 1$ there exist positive constants $C_1$, $C_2$ such that
                \begin{equation}\label{Embedding}
                    C_1\Vert{\,\cdot\,}\Vert_{-1,\gamma}\leq\frac{1}{\gamma_0} \Vert{\Delta_{\gamma}\,\cdot\,}\Vert_{0,\gamma} \leq C_2\Vert{\,\cdot\,}\Vert_{0,\gamma},
                \end{equation}
                for every $\gamma\in (\gamma_0,+\infty)$.
            \end{customproposition}
            \begin{proof}
                The upper inequality in \eqref{Embedding} is straightforward as soon as one realizes that $\Delta_{\gamma}$ is a family of pseudodifferential operators of order $0$, so only the lower inequality needs to be checked. Let us fix $\gamma_0\geq 1$. We know that  
                \begin{equation*}
                    \lambda_{\gamma}(\zeta)\equiv\lambda(\zeta)=(\gamma^2+\tau^2+\eta^2)^{1/2},
                \end{equation*}
                and 
                \begin{equation*}
                    \mat{\Delta}(X)\equiv\mat{\Delta}(t,y,\zeta)=\frac{\gamma+i\omega(t,y,\zeta)}{\lambda(\zeta)},
                \end{equation*}
                for $\gamma\geq \gamma_1> \gamma_0$. Let us study the cases $x_d=0$ and $x_d>0$ independently for greater clarity. If $x_d=0$, then 
                \begin{equation}\label{Delta_gamma_0}
                    \frac{1}{\lambda(\zeta)}=\frac{\gamma_0}{\gamma_0\lambda(\zeta)}\leq\frac{\sqrt{\gamma^2+\omega^2(t,y,\zeta)}}{\gamma_{0}\lambda(\tau,\eta,\gamma)}=\frac{\vert{\mat{\Delta}(X)}\vert}{\gamma_0}.
                \end{equation}
                In the same spirit, 
                \begin{equation}\label{1overgamma}
                    \frac{1}{\lambda}\leq \frac{1}{\gamma}< \frac{1}{\gamma_0},
                \end{equation}
                and a direct comparison reveals that
                \begin{align*}
                    \frac{1}{\lambda^2}\vert{\hat{u}}\vert^2=\frac{1}{\lambda^2}\hat{u}^2_1+\cdots+\frac{1}{\lambda^2}\hat{u}^2_p+\frac{1}{\lambda^2}\hat{u}^2_{p+1}+\cdots+\frac{1}{\lambda^2}\hat{u}^2_n<& \frac{1}{\gamma^2_0}\vert{\mat{\Delta}(X)\hat{u}_1}\vert^2+\cdots+\frac{1}{\gamma^2_0}\vert{\mat{\Delta}(X)\hat{u}_p}\vert^2+\\&\frac{1}{\gamma_0^2}\hat{u}^2_{p+1}+\cdots+\frac{1}{\gamma_0^2}\hat{u}^2_n=\frac{1}{\gamma^2_0}\vert{\delta(X)\hat{u}}\vert^2.
                \end{align*}
                Moreover, since
                \begin{equation*}
                    c_\delta\eqdef \frac{1}{\gamma_0^2}\delta^{*}(X)\delta(X)-\frac{1}{\lambda^2}
                \end{equation*}
                is diagonal and positive definite, a quick inspection shows that $\sqrt{c_\delta}$ is well-defined, smooth and bounded, as are all of its derivatives, so $\sqrt{c_\delta}\in S^0_{\gamma}(\R^{d}\times \R^{d}\times [\gamma_0, +\infty))$ \footnote{To be exact, $\lambda^{-1}\lesssim \sqrt{c_\delta} \lesssim 1.$}. If $\mathdutchcal{B}_0\eqdef \Op(\sqrt{c_\delta})$, the residue
                \begin{equation*}
                    R_1\eqdef \Delta_{\gamma}^*\Delta_{\gamma}-\Lambda^{-2}_{\gamma}-\mathdutchcal{B}_0^*\mathdutchcal{B}_0
                \end{equation*}
                is a pseudodifferential operator of order $-1$. Arguing inductively, we can determine $\mathdutchcal{B}_{j}=\Op_{\gamma}(\mathdutchcal{b}_j)\in \mathrm{OPS}_{\gamma}^{-j}(\R^{1+d}_{+}\times[\gamma_0, +\infty))$, $j\in \{0,\,\cdots, k-1\}$, so that
                \begin{equation*}
                    R_k\eqdef \Delta_{\gamma}^*\Delta_{\gamma}-\Lambda^{-2}_{\gamma}-(\mathdutchcal{B}_0+\cdots+\mathdutchcal{B}_{k-1})^*(\mathdutchcal{B}_0+\cdots+\mathdutchcal{B}_{k-1})
                \end{equation*}
                is a pseudodifferential operator of order $-k$ by imposing the symbol restriction
                \begin{equation*}
                    \overline{b_k}\sqrt{c_\delta}+\sqrt{c_\delta}b_k=\sigma(R_k).
                \end{equation*}
                It should be stressed that $\sigma(R_k)$ is real as $R_k$ is a self-adjoint operator, so $b_k$ can be chosen as $\frac{1}{2}\sigma(R_k)/\sqrt{c_\delta}$. Lastly, after iterating several times and dominating $R_k$ by $\Lambda^{-2}_{\gamma}$, we conclude that
                \begin{equation*}
                    \Vert{\,\cdot\,}\Vert_{-1,\gamma}\leq\frac{1}{\gamma_0} \Vert{\Delta_{\gamma}\,\cdot\,}\Vert_{0,\gamma}.
                \end{equation*}
                When $x_d>0$, the situation is more intricate and require some effort. Certainly, as $\delta^{-}_j(X)$ solves the transport equation \eqref{TransportEq}, $\delta^{-}_j(X)$ is the composition of $\mat{\Delta}(X)$ with the inverse of the Hamiltonian flow map $\phi_{x_d.j}$ associated with the eigenvalue $a_{1,j}(X)$ (see the proof of Theorem \ref{SymWR}). This means that if $X_{\flat}=(t_\flat,y_\flat,0,\tau_\flat,\eta_\flat,\gamma_\flat)\in \mathbb{Y}$ is such that 
                \begin{equation}\label{FlowComposition}
                    \delta^{-}_j(X)=(\phi^{*}_{-x_d,\,j}\,\mat{\Delta})(X)=\mat{\Delta}(X_\flat),
                \end{equation}
                then 
                \begin{align}\label{delta_gamma_0}
                    \nonumber\frac{1}{\lambda(\zeta)}=\frac{\gamma_0}{\gamma_0\lambda(\zeta)}\leq\frac{\sqrt{\gamma_{\flat}^2+\omega^2(t_\flat,y_{\flat},\zeta_{\flat})}}{\gamma_{0}\lambda(\zeta)}&=\frac{\sqrt{\gamma_{\flat}^2+\omega^2(t_\flat,y_{\flat},\zeta_{\flat})}}{\gamma_{0}\lambda(\zeta_\flat)}\frac{\lambda(\zeta_{\flat})}{\lambda(\zeta)}\\&=\frac{\vert{\Delta(X_{\flat})}\vert}{\gamma_0}\frac{\lambda(\zeta_{\flat})}{\lambda(\zeta)}=\frac{\vert{\delta^{-}_j(X)}\vert}{\gamma_0}\frac{\lambda(\zeta_{\flat})}{\lambda(\zeta)}.
                \end{align}
                In an attempt to control \eqref{delta_gamma_0} appropriately, we must find an upper bound for $\lambda(\zeta_{\flat})/\lambda(\zeta)$. This is addressed in the lemma below. 
                \begin{Lemma}
                    Let $\phi^{-1}\equiv\phi_{-x_d,\,j}$ be the inverse of the Hamiltonian map $\phi_{x_d}$ encoded in equation \eqref{bicharacteristics}. Under the assumptions of Theorem \ref{SymWR}, there exists a constant $C>0$ such that for every $X=(t,y,x_d,\zeta)\in \mathcal{V}$,
                    \begin{equation*}
                        \frac{\lambda(\phi^{-1}(X))}{\lambda(\zeta)}\leq C.
                    \end{equation*}
                \end{Lemma}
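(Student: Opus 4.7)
The strategy is to derive a differential inequality for $\lambda^{2}$ along the bicharacteristic curves and conclude via Gr\"onwall's lemma. Throughout, adopt the convention $t=y_{0}$, $\tau=\eta_{0}$ that is already used in \eqref{bicharacteristics}.

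First, since $d\gamma/dx_{d}=0$ along the flow, the $\gamma$ component is preserved. Using Hamilton's equations directly, we compute
\begin{equation*}
    \frac{d}{dx_{d}}\lambda^{2}(\zeta(x_{d}))
      = \frac{d}{dx_{d}}\!\left(\gamma^{2}+\sum_{k=0}^{d-1}\eta_{k}^{2}\right)
      = -2\sum_{k=0}^{d-1}\eta_{k}\,\partial_{y_{k}}a_{1,j}(t,y,x_{d},\eta,\gamma).
\end{equation*}

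Second, bound the right-hand side by a multiple of $\lambda^{2}$. Because $a_{1,j}$ is homogeneous of degree $1$ in $\zeta=(\tau,\eta,\gamma)$, so are the derivatives $\partial_{y_{k}}a_{1,j}$. Property \textbf{(C)} from Assumption \ref{Assumption2} guarantees that $\partial_{y_{k}}a_{1,j}$ vanishes identically outside a compact set $K\subset\mathbb{R}_{+}^{1+d}$; hence on the compactum $K\times S^{d}$ these derivatives are bounded, and by homogeneity we obtain a uniform constant $C_{1}>0$ such that $|\partial_{y_{k}}a_{1,j}(X)|\le C_{1}\lambda(\zeta)$ throughout $K\times\Xi$, while the estimate is trivial outside $K$. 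Combining this with the pointwise bound $|\eta_{k}|\le\lambda(\zeta)$ gives
\begin{equation*}
    \left|\frac{d}{dx_{d}}\lambda^{2}(\zeta(x_{d}))\right|
      \;\le\; 2d\,C_{1}\,\lambda^{2}(\zeta(x_{d})).
\end{equation*}

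Third, note that the $x_{d}$-coordinate of any $X\in\mathcal{V}$ stays in a bounded interval $[0,T_{0}]$ since $\mathcal{V}$ is conic in $\zeta$ (hence bounded in the spatial variables), and outside $K$ the flow preserves $\lambda$ anyway by Property \textbf{(C)}. Gr\"onwall's inequality applied to the backward flow from $x_{d}$ down to $0$ therefore yields
\begin{equation*}
    \lambda^{2}(\phi^{-1}(X)) \;\le\; e^{2dC_{1}x_{d}}\,\lambda^{2}(\zeta)
                             \;\le\; e^{2dC_{1}T_{0}}\,\lambda^{2}(\zeta),
\end{equation*}
so the constant $C\eqdef e^{dC_{1}T_{0}}$ does the job.

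The only point that requires care is ensuring that the bicharacteristic stays in the region $\mathcal{V}$ where the derivative estimates apply uniformly; this is handled by shrinking $\mathcal{V}$ if necessary (and invoking Property \textbf{(C)} to make the flow trivial, and hence $\lambda$ constant, once the trajectory exits $K$). There is no serious obstacle here: the whole estimate reduces to controlling the Hamiltonian vector field associated with a degree-one symbol whose spatial derivatives are compactly supported.
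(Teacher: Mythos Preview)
Your proof is correct but follows a genuinely different route from the paper. The paper argues by contradiction and compactness: using that both $\lambda$ and $\phi^{-1}$ are homogeneous of degree $1$ in $\zeta$, one normalises to the sphere $S^{d}$, so that the ratio $\lambda(\phi^{-1}(X_n))/\lambda(\zeta_n)$ becomes simply $\lambda(\phi^{-1}(t_n,y_n,x_{d,n},\zeta'_n))$ with $\zeta'_n\in S^{d}$; the relevant domain $\mathcal{V}'\subset\mathbb{X}_S$ is then compact (thanks to Property \textbf{(C)}), a subsequence converges, and continuity of $\lambda\circ\phi^{-1}$ contradicts the assumed unboundedness. Your approach is instead quantitative: you differentiate $\lambda^{2}$ along the Hamiltonian flow, bound the result by $2dC_{1}\lambda^{2}$ via the degree-one homogeneity of $\partial_{y_{k}}a_{1,j}$ in $\zeta$, and apply Gr\"onwall. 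This yields an explicit constant $C=e^{dC_{1}T_{0}}$ that the paper's soft argument does not provide, and it would generalise more readily if one ever needed growth estimates (rather than mere boundedness) in $x_{d}$.

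One cosmetic remark: your invocation of Property \textbf{(C)} to make $\partial_{y_{k}}a_{1,j}$ vanish outside $K$ is slightly misaimed, since the eigenvalue $a_{1,j}$ is only defined on the local neighborhood $\mathcal{V}$, not globally. What you actually need, and what already suffices, is that $\mathcal{V}$ is bounded in the spatial variables $(t,y,x_{d})$ and conic in $\zeta$; then $\partial_{y_{k}}a_{1,j}$, being continuous on the compact set $\mathcal{V}\cap\mathbb{X}_{S}$ and homogeneous of degree one, satisfies $|\partial_{y_{k}}a_{1,j}|\leq C_{1}\lambda(\zeta)$ throughout $\mathcal{V}$. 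This does not affect the validity of your argument.
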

                \begin{proof}
                    Let us argue by contradiction and suppose that for every $n\in \N$, there is a $X_n=(t_n,y_n,x_{d,n},\zeta_n)\in \mathcal{V}$ such that 
                    \begin{equation}\label{SeqN}
                        \frac{\lambda(\phi^{-1}(t_n,y_n,x_{d,n},\zeta_n))}{\lambda(\zeta_n)}> n.
                    \end{equation}
                    Considering $\lambda$ and $\phi^{-1}$ are homogeneous of degree $1$ in $\zeta$, we can equally write \eqref{SeqN} as
                    \begin{equation}\label{SeqNHom}
                        \lambda(\phi^{-1}(t_n,y_n,x_{d,n},{\zeta'}_n))> n,
                    \end{equation}
                    where $\zeta'_n\eqdef \zeta_n/\vert{{\zeta}_n}\vert=\zeta_n/\lambda(\zeta_n)$. If we look at the covariables as elements on the sphere $S^d$, the new neighborhood $\mathcal{V}'\subset\mathbb{X}_S$ is compact (as $\mathcal{V}$ can be taken compact in $(t,y,x_d)$ in view of Property \textbf{(C)}), so the sequence $\{X_n\}_{n\in\N}$ has a convergent subsequence, say, $\{X_k\}_{k\in\N}$, so that $X_k\to \underline{X}=(\underline{t},\underline{y},\underline{x}_{d},\underline{\zeta'})$ as $k$ goes to infinity. Then 
                    \begin{align}\label{contra}
                        \nonumber k&<\vert{\lambda(\phi^{-1}(t_k,y_k,x_{d,k},{\zeta'}_k))}\vert\\&=\vert{\lambda(\phi^{-1}(t_k,y_k,x_{d,k},{\zeta'}_k))-\lambda(\phi^{-1}(\underline{t},\underline{y},\underline{x}_{d},\underline{\zeta'}))}\vert+\lambda(\phi^{-1}(\underline{t},\underline{y},\underline{x}_{d},\underline{\zeta'})).
                    \end{align}
                    For $k$ sufficiently large, the difference on the right-hand side of \eqref{contra} can be made arbitrarily small because of the continuity of $\phi^{-1}$ and $\lambda$, meaning that the whole expression can be bounded by some constant $C'$ for $k$ large enough, which is a contradiction. 
                \end{proof}
                Returning to equations \eqref{1overgamma} and \eqref{delta_gamma_0}, we see that 
                \begin{equation*}
                    \frac{1}{\lambda(\zeta)}\leq \frac{C}{\gamma_0}\vert{\delta^{-}_j(X)}\vert
                \end{equation*}
                and 
                \begin{equation*}
                    \frac{1}{\lambda}\leq \frac{C}{\gamma_0}
                \end{equation*}
                for some positive constant $C$. The rest of the proof is identical to that for $x_d=0$.
            \end{proof}
            Proposition \ref{DeltaHminus} allows us to handle $\langle{\Psi_{-1}v,\Delta v}\rangle$ and $\vert{\Psi_{-1}v(0)}\vert^2$ effectively using the philosophy illustrated in the next step.

            \textit{Step 7. We glue the pieces}. Recall that $\Delta$, $Q$, $\tilde{u}$, $\tilde{f}$, and $\tilde{g}$ in \eqref{EEu_i} are indexed by $i$, so we can add the pieces to get
            \begin{align}
                \gamma\sum_{i\in I}\Vert{\Delta_i \tilde{u}_i }\Vert^2+\sum_{i\in I}\vert{\Delta_i \tilde{u}_i(0)}\vert^2&\leq C\left(\frac{1}{\gamma}\sum_{i\in I}\Vert{\Delta_i\tilde{f}_i }\Vert^2+\sum_{i\in I}\vert{Q_i\tilde{g}_i}\vert^2\right).
            \end{align}
            Let 
            \begin{equation*}
                \Delta\eqdef \sum_{i\in I}\mathcal{E}_i\Delta_i\mathcal{E}_i^{-1}\underline{\Phi}_i=\sum_{i\in I}\mathcal{E}_i\Delta_i\underline{\Phi}_i\mathcal{E}_i^{-1} \mod \Psi^{-1}.
            \end{equation*}
            From Proposition \ref{DeltaHminus}, the triangle inequality, and the convexity of the power function $x\to x^2$, it follows that
            \begin{align}\label{EE_lowerbound}
                \Vert{\Delta u}\Vert^2\lesssim \sum_{i\in I}\Vert{\Delta_i \tilde{u}_i}\Vert^2 \qquad \mathrm{and} \qquad
                \vert{\Delta u(0)}\vert^2\lesssim \sum_{i\in I}\vert{\Delta_i \tilde{u}_i(0)}\vert^2.
            \end{align}
           Consequently,
           \begin{align}\label{EE_prefinal}
                \gamma\Vert{\Delta u }\Vert^2+\vert{\Delta u(0)}\vert^2&\leq C\left(\frac{1}{\gamma}\sum_{i\in I}\Vert{\Delta_i\tilde{f}_i }\Vert^2+\sum_{i\in I}\vert{Q_i\tilde{g}_i}\vert^2\right),
            \end{align}
            and then the obvious relations $\Vert{\Delta_i\tilde{f}_i}\Vert\lesssim\Vert{\tilde{f}_i}\Vert\lesssim\Vert{f}\Vert$ and $\vert{Q_i\tilde{g}}\vert\lesssim \vert{\tilde{g}}\vert\lesssim \vert{g}\vert$ yield 
            \begin{align}\label{EE_Final}
                \gamma\Vert{\Delta u }\Vert^2+\vert{\Delta u(0)}\vert^2&\leq C\left(\frac{1}{\gamma}\Vert{f}\Vert^2+\vert{g}\vert^2\right).
            \end{align}
            
            Formula \eqref{EE_Final} is not a strict generalization of \eqref{Delta_Estimates_CC} as $\Delta$ is not present on the right-hand side of the inequality. However, if we further assume that $\mathdutchcal{s}$ remains constant for each $i\in I$, then a variable coefficient version of \eqref{Delta_Estimates_CC} holds true. To understand why, we first resort to the ideas discussed around equation \eqref{BXDY} to obtain
            \begin{align}\label{EE_Final_2}
                \gamma\Vert{\Delta u }\Vert^2+\vert{\Delta u(0)}\vert^2&\leq C\left(\frac{1}{\gamma}\sum_{i\in I}\Vert{\Delta_i\tilde{f}_i}\Vert^2+\sum_{i\in I}\vert{\Delta_i Y_i\tilde{g}_i}\vert^2\right).
            \end{align}
            At this point, consider two charts $\mathcal{V}_i$ and $\mathcal{V}_j$ and the identity
            \begin{equation*}
                \Vert{\mathcal{E}_i\Delta_i\mathcal{E}_i^{-1}\tilde{f}_i+\mathcal{E}_j\Delta_j\mathcal{E}_j^{-1}\tilde{f}_j}\Vert^2=\Vert{\mathcal{E}_i\Delta_i\mathcal{E}_i^{-1}\tilde{f}_i}\Vert^2+2\real{\mathcal\langle\mathcal{E}_i\Delta_i\mathcal{E}_i^{-1}\tilde{f}_i,\mathcal{E}_j\Delta_j\mathcal{E}_j^{-1}\tilde{f}_j\rangle}+\Vert{\mathcal{E}_j\Delta_j\mathcal{E}_j^{-1}\tilde{f}_j}\Vert^2.
            \end{equation*}
            If $\mathcal{V}_i$ and $\mathcal{V}_j$ are disjoint,  $2\real{\mathcal\langle\mathcal{E}_i\Delta_i\mathcal{E}_i^{-1}\tilde{f}_i,\mathcal{E}_j\Delta_j\mathcal{E}_j^{-1}\tilde{f}_j\rangle}$ vanishes identically and
            \begin{equation}\label{Diff}
                \Vert{\mathcal{E}_i\Delta_i\mathcal{E}_i^{-1}\tilde{f}_i+\mathcal{E}_j\Delta_j\mathcal{E}_j^{-1}\tilde{f}_j}\Vert^2=\Vert{\mathcal{E}_i\Delta_i\mathcal{E}_i^{-1}\tilde{f}_i}\Vert^2+\Vert{\mathcal{E}_j\Delta_j\mathcal{E}_j^{-1}\tilde{f}_j}\Vert^2.
            \end{equation}
            In contrast, when $\mathcal{V}_i$ and $\mathcal{V}_j$ are adjacent,  $2\real{\mathcal\langle\mathcal{E}_i\Delta_i\mathcal{E}_i^{-1}\tilde{f}_i,\mathcal{E}_j\Delta_j\mathcal{E}_j^{-1}\tilde{f}_j\rangle}$ is not necessarily zero and must be examined carefully. In particular, it is worth noting that
            \begin{align*}
                2\real{\mathcal\langle\mathcal{E}_i\Delta_i\mathcal{E}_i^{-1}\tilde{f}_i,\mathcal{E}_j\Delta_j\mathcal{E}_j^{-1}\tilde{f}_j\rangle}&=2\real{\mathcal\langle\mathcal{E}_i\Delta_i\mathcal{E}_i^{-1}\underline{\Phi}_if,\mathcal{E}_j\Delta_j\mathcal{E}_j^{-1}\underline{\Phi}_j f\rangle}\\&=2\real{\mathcal\langle\mathcal{E}_i\Delta_i\mathcal{E}_i^{-1}\Phi^*_i\Phi_if,\mathcal{E}_j\Delta_j\mathcal{E}_j^{-1}\Phi^*_j\Phi_j f\rangle}\\&=2\real{\mathcal\langle\mathcal{E}_i\Delta_i\mathcal{E}_i^{-1}\Phi_i\Phi_jf,\mathcal{E}_j\Delta_j\mathcal{E}_j^{-1}\Phi_i\Phi_j f\rangle}+E,
            \end{align*}
            with 
            \begin{equation}\label{ErrorMixed}
                E\eqdef 2\real{\mathcal\langle\mathcal{E}_i\Delta_i\mathcal{E}_i^{-1}\underline{\Phi}_i f,\Psi_{-1}f\rangle}+2\real{\mathcal\langle\Psi_{-1}f,\mathcal{E}_j\Delta_j\mathcal{E}_j^{-1}\underline{\Phi}_j f\rangle}+\Vert{\Psi_{-1}f}\Vert^2
            \end{equation}
            to be controlled later. In order to analyze $2\real{\mathcal\langle\mathcal{E}_i\Delta_i\mathcal{E}_i^{-1}\Phi_i\Phi_jf,\mathcal{E}_j\Delta_j\mathcal{E}_j^{-1}\Phi_i\Phi_j f\rangle}$, let us choose initially any $X\in \supp\varphi_1\cap\supp\varphi_2\subset \mathcal{V}_i\cap\mathcal{V}_j$ so that $x_d=0$. Given that the first $p$ columns of $\mathcal{E}_i(X)$ and $\mathcal{E}_j(X)$ both span $\mathbb{E}^{-}(X)$, the \Lop determinants $\mat{\Delta}_i(X)$ and $\mat{\Delta}_j(X)$ are equivalent and related by a smooth, nonvanishing \footnote{in a neighborhood of $\mathcal{V}_i\cap\mathcal{V}_j$}, complex-valued factor $\mathdutchcal{v}$ as shown
            \begin{equation*}
                \mat{\Delta}_i(X)=\mathdutchcal{v}_{ij}(X)\mat{\Delta}_j(X).
            \end{equation*}
            Hence, if $\underline{\mathdutchcal{v}}_{ij}(X)\eqdef \mathrm{diag}\left(\mathdutchcal{v}_{ij}(X)I_\mathdutchcal{s}, I_{p-\mathdutchcal{s}},I_{n-p}\right)$,
            \begin{align}\label{LopConnection}
                 \delta_i(X)&=\nonumber\mathrm{diag}\left(\mat{\Delta}_i(X)I_\mathdutchcal{s}, I_{p-\mathdutchcal{s}},I_{n-p}\right)=\mathrm{diag}\left(\mathdutchcal{v}_{ij}(X)\mat{\Delta}_j(X)I_\mathdutchcal{s}, I_{p-\mathdutchcal{s}},I_{n-p}\right)\\&\nonumber=\mathrm{diag}\left(\mathdutchcal{v}_{ij}(X)I_\mathdutchcal{s}, I_{p-\mathdutchcal{s}},I_{n-p}\right)\mathrm{diag}\left(\mat{\Delta}_j(X)I_\mathdutchcal{s}, I_{p-\mathdutchcal{s}},I_{n-p}\right)\\&=\underline{\mathdutchcal{v}}_{ij}(X)\delta_j(X).
            \end{align}
            According to \eqref{bicharacteristics}, the bicharacteristic curves depend exclusively on the eigenvalues of $a_1(X)$, \textemdash which are known to be invariant under change of coordinates\textemdash, and thus,  as long as their order is respected, the flow maps in \eqref{FlowComposition} coincide, making equation \eqref{LopConnection} valid for $x_d>0$. As a result, if by abuse of notation we think of $\underline{\mathdutchcal{v}}_{ij}$ as the homogeneous extension of degree 0 of itself, we can write $\mathdutchcal{V}_{ij}=\Op(\underline{\mathdutchcal{v}}_{ij})$ and
            \begin{equation}\label{Delta_ij}
                \Delta_i=\Op(\theta_i\delta_i)=\Op(\theta_i\underline{\mathdutchcal{v}}_{ij}\delta_j)=\mathdutchcal{V}_{ij}\Delta_j+\Psi_{-1},
            \end{equation}
            provided that $\Delta_i$ and $\Delta_j$ act on elements of the form $\Phi_i\Phi_j f$. After all, the latter implies that
            \begin{align*}
                \mathcal{E}_{i}\Delta_1\mathcal{E}^{-1}_{i}=\mathcal{E}_{i}(\mathdutchcal{V}_{ij}\Delta_j+\Psi_{-1})\mathcal{E}^{-1}_{i}&=\mathcal{E}_{i}\mathdutchcal{V}_{ij}\Delta_j\mathcal{E}^{-1}_{i}+\mathcal{E}_{i}\Psi_{-1}\mathcal{E}^{-1}_{i}\\&=\mathcal{E}_{i}\mathdutchcal{V}_{ij}(\mathcal{E}^{-1}_{i}\mathcal{E}_{i}+\Psi_{-1})\Delta_j\mathcal{E}^{-1}_{i}+\mathcal{E}_{i}\Psi_{-1}\mathcal{E}^{-1}_{i}\\&=\mathcal{E}_{i}\mathdutchcal{V}_{ij}\mathcal{E}^{-1}_{i}\mathcal{E}_i\Delta_j\mathcal{E}^{-1}_{i}+\Psi_{-1}\\&=\mathcal{E}_{i}\mathdutchcal{V}_{ij}\mathcal{E}^{-1}_{i}\mathcal{E}_{j}\Delta_j\mathcal{E}_{j}^{-1}+\Psi_{-1},
            \end{align*}
            where we have exploited the equality $\ee_{0,i}(X)\delta_j(X)\ee^{-1}_{0,i}(X)=\ee_{0,j}(X)\delta_j(X)\ee^{-1}_{0,j}(X)$, which stems directly from the definition of $\delta_j(X)$. Note that $\dot{\mathdutchcal{V}_{ij}}\eqdef\mathcal{E}_{1}\mathdutchcal{V}_{ij}\mathcal{E}_1^{-1}$ is an elliptic operator when applied to $\Phi_i\Phi_j f$, so 
            \begin{align}\label{MixTerms1}
                \nonumber2\real{\mathcal\langle\dot{\Delta}_i\Phi_i\Phi_jf,\dot{\Delta}_j\Phi_i\Phi_j f\rangle}&=2\real{\mathcal\langle\dot{\mathdutchcal{V}_{ij}}\dot{\Delta}_j\Phi_i\Phi_jf,\dot{\Delta}_j\Phi_i\Phi_j f\rangle}+2\real{\langle{\Psi_{-1}\Phi_i\Phi_j f,\dot{\Delta}_j\Phi_i\Phi_j f}\rangle}\\&\geq c\Vert{\dot{\Delta}_j\Phi_i\Phi_j f}\Vert^2-2\Vert{\Psi_{-1}\Phi_i\Phi_j f}\Vert\Vert{\dot{\Delta}_j\Phi_i\Phi_j f}\Vert,
            \end{align}
            the closing line being justified by Gårding and Cauchy-Schwarz inequalities. In addition, the second term in \eqref{MixTerms1} enjoys the estimate 
            \begin{align}\label{MixTerms2}
                \nonumber\Vert{\Psi_{-1}\Phi_i\Phi_j f}\Vert\Vert{\dot{\Delta}_j\Phi_i\Phi_j f}\Vert&\lesssim \Vert{\Psi_{-1}\mathcal{E}^{-1}_{1}\Phi_i\Phi_j f}\Vert\Vert{\dot{\Delta}_j\Phi_i\Phi_j f}\Vert\\\nonumber&\lesssim \sqrt{\gamma_0}\Vert{\Psi_{-1}\mathcal{E}^{-1}_{1}\Phi_i\Phi_j f}\Vert^2+\frac{1}{4\sqrt{\gamma_0}}\Vert{\dot{\Delta}_j\Phi_i\Phi_j f}\Vert^2\\&\lesssim \frac{1}{\sqrt{\gamma_0}}\Vert{\dot{\Delta}_j\Phi_i\Phi_j f}\Vert^2+\frac{1}{4\sqrt{\gamma_0}}\Vert{\dot{\Delta}_j\Phi_i\Phi_j f}\Vert^2.
            \end{align}
            In the end, combining \eqref{MixTerms1} and \eqref{MixTerms2} and taking $\gamma_0$ (and thereby $\gamma$) sufficiently large, it is easily verified that $2\real{\mathcal\langle\dot{\Delta}_i\Phi_i\Phi_jf,\dot{\Delta}_j\Phi_i\Phi_j f\rangle}>0$. Likewise, from \eqref{MixTerms2} and Proposition \ref{DeltaHminus}, we can absorb $E$ from \eqref{ErrorMixed} too. In this manner, proceeding similarly with $\Delta_iY_i\tilde{g}_i$ and $\Delta_jY_j\tilde{g}_j$, we deduce that
            \begin{equation*}
                \gamma\Vert{\Delta u }\Vert^2+\vert{\Delta u(0)}\vert^2\leq \frac{1}{\gamma}\Vert{\Delta f}\Vert^2+\Vert{\Delta Yg}\Vert^2,
            \end{equation*}
            provided that we pick $\gamma_0$ large enough.
            

            
            \textit{Step 8. We extend the argument to Sobolev spaces}. 
            For the sake of clarity, let us remove the index $i$ and the parameter $\gamma$ once again. Since $\Lambda^s_{\gamma} \tilde{u} \in L^2_{\Delta}$ when $\tilde{u}\in H_{\Delta}^s$, $\Lambda^s_{\gamma} \tilde{u}$ fulfills  
             \begin{align}\label{EE_u_Sobolev}
                \gamma\Vert{\Delta_i\Lambda^s\tilde{u}_i}\Vert^2+\vert{\Delta_i\Lambda^s\tilde{u}_i(0)}\vert^2&\leq C\left(\frac{1}{\gamma}\Vert{\Delta_i \underline{P}_i\Lambda^s\tilde{u}_i }\Vert^2+\vert{Q_i\dot{B}_i\Lambda^s\tilde{u}_i}\vert^2\right).
            \end{align}
            Notice that
            \begin{align*}
                \Vert{\Delta \underline{P}\Lambda^s\tilde{u}}\Vert^2&\lesssim \Vert{\Lambda^s\Delta \underline{P}\tilde{u}}\Vert^2+\Vert{[\Delta,\Lambda^s]\underline{P}\tilde{u}}\Vert^2+\Vert{\Delta[\underline{P},\Lambda^s]\tilde{u}}\Vert^2\\&\lesssim \Vert{\Lambda^s\Delta \underline{P}\tilde{u}}\Vert^2+\Vert{\underline{P}\tilde{u}}\Vert^2_{-1}+\Vert{\Delta[\underline{P},\Lambda^s]\tilde{u}}\Vert^2\\&\lesssim \Vert{\Delta \tilde{f}}\Vert^2+\frac{1}{\gamma^2_0}\Vert{\Delta\tilde{f}}\Vert^2+\Vert{\Delta[\underline{P},\Lambda^s]\tilde{u}}\Vert^2,
            \end{align*}
            where we have appealed to Proposition \ref{DeltaHminus} in the last part. It remains to check that we can deal with $\Delta[\underline{P}\,,\Lambda^s]$ satisfactorily. In essence, taking into account that $\underline{P}$ is block diagonal modulo $\Psi_{-1}$, $[\underline{P}\,, \Lambda^s]$ is an operator of order $0$ with principal symbol $i\{\Lambda^s, \xi_d+\dot{a}_1\}$, also diagonal, so
            \begin{equation*}
                \Delta[\underline{P}\,,\Lambda^s]=[\underline{P}\,,\Lambda^s]\Delta +\Psi_{-1},
            \end{equation*}
            and therefore, 
            \begin{align}\label{PPhi}
                \Vert{\Delta \underline{P}\Lambda^s\tilde{u}}\Vert^2&\lesssim \Vert{\Delta\tilde{f}}\Vert^2+\frac{1}{\gamma^2_0}\Vert{\Delta\tilde{f}}\Vert^2+\Vert{[\underline{P},\Lambda^s]\Delta\tilde{u}}\Vert^2+\Vert{\tilde{u}}\Vert^2_{-1}\\\nonumber&\lesssim \Vert{\Delta \tilde{f}}\Vert^2+\frac{1}{\gamma^2_0}\Vert{\Delta\tilde{f}}\Vert^2+\Vert{\Delta\tilde{u}}\Vert^2+\frac{1}{\gamma_0^2}\Vert{\Delta \tilde{u}}\Vert^2\\\nonumber&\lesssim \Vert{\Delta \tilde{f}}\Vert^2+\Vert{\Delta\tilde{u}}\Vert^2.
            \end{align}
            Let us now pay attention to $Q\dot{B}\Lambda^s\tilde{u}$. In this case, 
            \begin{align}\label{QBPhi}
                \vert{Q\dot{B}\Lambda^s\tilde{u}(0)}\vert^2&\lesssim \vert{\Lambda^s Q\dot{B}\tilde{u}(0)}\vert^2+\vert{[Q,\Lambda^s]\dot{B}\tilde{u}(0)}\vert^2+\vert{Q[\dot{B},\Lambda^s]\tilde{u}(0)}\vert^2\\\nonumber&\lesssim \vert{\Lambda^s Qg}\vert^2+\vert{\tilde{u}(0)}\vert_{-1}^2\\\nonumber&\lesssim \vert{Qg}\vert^2+\frac{1}{\gamma_0^2}\vert{\Delta\tilde{u}(0)}\vert^2,
            \end{align}
            the second inequality being a consequence of the fact that $[Q,\Lambda^s]\dot{B}$ and $Q[\dot{B},\Lambda^s]$ are operators of order $-1$. In summary, we have
            \begin{align}
                \Vert{\Delta \underline{P}\Lambda^s\tilde{u}}\Vert^2&\lesssim\Vert{\Delta \tilde{f}}\Vert_{s}^2+\Vert{\Delta\tilde{u}}\Vert_s^2,\\
                \vert{Q\dot{B}\Lambda^s\tilde{u}(0)}\vert^2&\lesssim\vert{Q\tilde{g}}\vert_s^2+\frac{1}{\gamma_0^2}\vert{\Delta\tilde{u}(0)}\vert_s^2.
            \end{align} 
           For the left-hand side of \eqref{EE_u_Sobolev}, we utilize $\Lambda^s\Delta=\Delta\Lambda^s+[\Lambda^s, \Delta]$ and conclude that
           \begin{align*}
                \Vert{\Lambda^s\Delta\tilde{u}}\Vert^2&\lesssim\Vert{\Delta\Lambda^s\tilde{u}}\Vert^2+\Vert{\Psi_{s-1}\tilde{u}}\Vert^2,\\
                \Vert{\Delta\tilde{u}}\Vert_s^2-\frac{1}{\gamma^2_0}\Vert{\Delta\tilde{u}}\Vert_s^2&\lesssim\Vert{\Delta\Lambda^s\tilde{u}}\Vert^2.
            \end{align*}
            Eventually, collecting terms and choosing $\gamma_0$ large enough produces
            \begin{align}
                \gamma\Vert{\Delta \tilde{u} }\Vert_s^2+\vert{\Delta \tilde{u}(0)}\vert_s^2&\leq C\left(\frac{1}{\gamma}\Vert{\Delta\tilde{f} }\Vert_s^2+\vert{Q\tilde{g}}\vert_s^2\right).
            \end{align}
            From this point on, we continue in the same way as we did for $\Vert{\,\cdot\,}\Vert$.
        \end{proof}
    \section{The well-posedness of WR problems}\label{wellposedness}
        \subsection{Construction of a parametrix.}
        In this section, we address the classical questions of existence, uniqueness, and regularity for solutions of $\mathcal{WR}$ problems using the energy estimates inferred in previous sections. The key idea is to verify that $\tilde{w}_i=\Delta_{\gamma,i}\tilde{u}_i$ is the unique solution of of a problem satisfying the uniform \Lop condition, after which the properties of $\tilde{u}_i$ can be investigated from the analysis of the pseudodifferential equation $\tilde{w}_i=\Delta_{\gamma,i}\tilde{u}_i$.
        
        To start with, let us revisit Step 3 in the proof of Theorem \ref{main}. Under the conditions and notation specified there, $\tilde{w}=\Delta_{\gamma}\tilde{u}$ is a solution of the auxiliary problem
        \begin{equation}\label{StrongSysWRCh4}
            \left\{\begin{aligned}
                \left(D_{d}+\underline{\mathdutchcal{A}}\right)\tilde{w}(t,x)&=\Delta \tilde{f}(t,x), \\
                M\tilde{w}(t,y)&={Q}\tilde{g}(t,y),
            \end{aligned}\right.
       \end{equation}
        which follows from applying $\Delta_{\gamma}$ to both sides of \eqref{ProblemAfterDiag} and from the next identities modulo $\Psi_{-1}$:
        \begin{align}\label{SymProp_2Ch4}
            \nonumber\Delta_{\gamma} \underline{A}_{\gamma}&=\underline{A}_{\gamma}\Delta_{\gamma}-i\Op_{\gamma}(\{\dot{a}_1,\delta\}),
        \end{align}
        \begin{equation*}
            \Delta_{\gamma} D_d=D_d\Delta_{\gamma}-i\Op{\delta},
        \end{equation*}
        and 
        \begin{equation*}
            M_{\gamma}\Delta_{\gamma}=Q_{\gamma}B_{\gamma}.
        \end{equation*}
        If $\Delta_{\gamma} \tilde{f}\in H^s_{\gamma}(\overline{\R}_{+}^{1+d})$ and $\Delta_{\gamma} \tilde{g}\in H^s_{\gamma}(\overline{\R}^{d})$, Theorem \ref{ExistenceUniquenessRegularity} asserts that there exists a unique $\tilde{w}\in H^s_{\gamma}(\overline{\R}_{+}^{1+d})$ with $\tilde{w}|_{x_d=0}\in H^s_{\gamma}(\R^{d})$ that fulfills \eqref{StrongSysWRCh4}. From this perspective, there is some hope of establishing existence, uniqueness, and regularity for $\tilde{u}$ by studying $\Delta \tilde{u}=\tilde{w}$ equipped with $u|_{t=0}=0$. The major steps in this direction are given by the two statements below.
            
        \begin{Theorem}[Lemma 3.2, \cite{taylor1979rayleigh}]\label{parametrix}
            Let $\Omega$ be a compact manifold and suppose that $\R\times \Omega$ has coordinates $(t,x)$. Assume that $P$ is a scalar zeroth-order pseudodifferential operator with real principal symbol $p(t,x,\tau,\xi)$.  If $f\in \mathcal{E}'(\R\times\Omega)$ is supported in $\{t>0\}$ and $\partial_{\tau}p$ is nonvanishing whenever $p=0$, then the pseudodifferential equation 
            \begin{equation}\label{PseudoEquation}
                Pu=f
            \end{equation}
            has a unique solution modulo $C^{\infty}$ that vanishes for $\{t<0\}$. Moreover, $\mathrm{WF}(u)$ is contained in the union of $\mathrm{WF}(Pu)$ and the set of positively time-oriented null bicharacteristics of $p$ passing over $\mathrm{WF}(u)$.
        \end{Theorem}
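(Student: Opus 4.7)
The plan is to prove this as a standard application of Hörmander's theory of pseudodifferential operators of real principal type. The hypothesis that $\partial_\tau p$ is nonvanishing on $\{p=0\}$ says the Hamiltonian vector field $H_p$ has a nonzero $\partial_t$ component there; consequently $P$ is non-characteristic in the $t$-direction, the null bicharacteristic flow is transverse to level sets of $t$, and any forward null bicharacteristic in $T^*(\R\times\Omega)\setminus 0$ escapes every bounded $t$-interval in finite parameter time.

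First I would construct a left parametrix $E$ for $P$ with $EP=I+R$, $R\in\mathrm{OPS}^{-\infty}$. In the elliptic region $\{p\neq 0\}$ a standard elliptic parametrix suffices. Near a characteristic point $(\underline{t},\underline{x},\underline{\tau},\underline{\xi})$, the condition $\partial_\tau p\neq 0$ together with the Malgrange preparation (implicit function) theorem yields a microlocal factorization $p=e\cdot(\tau+a(t,x,\xi))$ with $e$ elliptic and $a$ real, homogeneous of degree $1$. Thus $P$ is microlocally equivalent, modulo elliptic factors, to $D_t+A(t,x,D_x)$ with $A$ having real principal symbol $a$. For such an operator one constructs a fundamental solution $U(t,s)$ by Egorov's theorem (or equivalently by solving the transport equations along the Hamiltonian flow of $a$), and sets $u(t,\cdot)\eqdef i\int_{-\infty}^{t} U(t,s)\,g(s,\cdot)\,ds$, which solves $(D_t+A)u=g$ and manifestly vanishes for $t<0$ whenever $g$ does.

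Next I would patch these local solvers together via a microlocal partition of unity on the cosphere bundle over $\R\times\Omega$, combine with the elliptic parametrix, and obtain a global $E$ enjoying the stated property. The candidate solution is $u\eqdef Ef$. Uniqueness modulo $C^\infty$ follows from Hörmander's propagation of singularities: if $v$ vanishes for $t<0$ and $Pv\in C^\infty$, then $\mathrm{WF}(v)$ is a union of maximal null bicharacteristics of $p$, every such bicharacteristic reaches $\{t<0\}$ when flown backwards (by transversality), and there $v$ is smooth, so $\mathrm{WF}(v)=\emptyset$ and $v\in C^\infty$. The same propagation theorem applied to $u$ yields the advertised wavefront set estimate, since $\mathrm{WF}(u)\setminus\mathrm{WF}(Pu)$ is invariant under the forward bicharacteristic flow and $u$ is smooth on $\{t<0\}$.

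The main obstacle is the careful microlocal bookkeeping in the parametrix construction: one must check that the cutoffs used to glue local solvers do not introduce spurious singularities, and that the forward Duhamel integration is compatible with the support condition across charts. Using a flow-adapted partition of unity on the cosphere bundle (finite by compactness of $\Omega$ and by localization in the fiber) keeps everything under control and reduces the entire argument to Egorov's theorem, the standard transport construction, and Hörmander's propagation theorem.
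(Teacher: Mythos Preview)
The paper does not give its own proof of this statement: it is quoted verbatim as Lemma~3.2 of Taylor's paper \cite{taylor1979rayleigh} and used as a black box to feed into Proposition~\ref{DeltaPT} and Theorem~\ref{PolarisationWR}. So there is no in-paper argument to compare against.

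That said, your sketch is the standard real-principal-type proof and is essentially how Taylor (following H\"ormander and Duistermaat--H\"ormander) establishes the result. The reduction via Malgrange preparation to $D_t+A(t,x,D_x)$ with real principal symbol, the forward Duhamel construction of the parametrix, the microlocal patching, and the appeal to propagation of singularities for both uniqueness modulo $C^\infty$ and the wavefront estimate are all correct and in the right order. One terminological quibble: the fundamental solution $U(t,s)$ for $D_t+A$ is built as a Fourier integral operator by solving the eikonal and transport equations (geometric optics); Egorov's theorem is a \emph{consequence} of that construction rather than the construction itself, though your parenthetical ``or equivalently by solving the transport equations along the Hamiltonian flow of $a$'' is the right picture. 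With that caveat, your proposal is a faithful outline of the classical argument.
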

            
       \begin{Proposition}\label{DeltaPT}
            Suppose that $\delta(X)=\mathrm{diag}(\delta^{-}(X), I_{n-p})$ with
            \begin{equation}
                \delta^{-}(X) \eqdef\begin{pNiceMatrix}
                    \begin{pmatrix}
                        \delta_{1}^{-}(X)&\hdots&0\\
                        \vdots&\ddots&\vdots\\
                        0&\hdots&\delta_{\mathdutchcal{s}}^{-}(X)\\
                    \end{pmatrix}&      \\
                    &I_{p-\mathdutchcal{s}}
                \end{pNiceMatrix},
            \end{equation}
            where $\delta_1(X),\,\cdots, \delta_{\mathdutchcal{s}}(X)$ are as described in Theorem \ref{SymWR}.  Then the pseudodifferential operators 
            \begin{equation*}
                \Op_{\gamma}{(\delta_1)}, \,\cdots, \Op_{\gamma}{(\delta_{\mathdutchcal{s}})}
            \end{equation*}
            are of real principal type.
        \end{Proposition}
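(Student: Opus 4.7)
The plan is to exploit the explicit representation of $\delta_j^-$ as a pullback along the bicharacteristic flow of $a_{1,j}$, and then transfer the real-principal-type property from the boundary datum $\mat{\Delta}$ to the propagated symbol. Since $\underline{X}\in\mathcal{H}$, the eigenvalue $a_{1,j}(X)$ is real-valued in a conic neighborhood $\mathcal{V}$ by the block structure condition, so the Hamiltonian vector field $H_{a_{1,j}}$ has real coefficients and decouples the action on real and imaginary parts of any transported symbol. The transport equation in \eqref{TransportEq} is solved by $\delta_j^-(X) = (\phi_{-x_d,j}^{*}\mat{\Delta})(X)$, where $\phi_{x_d,j}$ is the Hamiltonian flow of $a_{1,j}$ introduced in Step 3 of Theorem \ref{SymWR}; together with the conservation $d\gamma/dx_d = 0$ along the flow, this yields the explicit expression
\begin{equation*}
\delta_j^-(X) = \frac{\gamma + i\omega_\flat(X)}{\lambda_\flat(X)},
\end{equation*}
with $\omega_\flat \eqdef \phi_{-x_d,j}^{*}\omega$ real-valued and $\lambda_\flat \eqdef \phi_{-x_d,j}^{*}\lambda > 0$ (hence elliptic).

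Next, I would identify the characteristic variety and verify the non-degeneracy condition. Since $1/\lambda_\flat$ is a nonvanishing elliptic factor, it plays no role in the principal-type classification, and the analysis reduces to $\gamma + i\omega_\flat$. For $\gamma>0$ the symbol is elliptic; characteristics therefore occur only on $\{\gamma=0\}$, where they coincide with the real hypersurface $\{\omega_\flat = 0\}$. On that set I need $\partial_\tau \omega_\flat \neq 0$. At $x_d = 0$ this is precisely the WR hypothesis $\partial_\tau \omega \neq 0$ obtained in Proposition \ref{STBasis}. For $x_d > 0$, the symplectomorphism $\phi_{-x_d,j}$ reduces to the identity at $x_d=0$, so $\partial_\tau \omega_\flat|_{x_d=0} = \partial_\tau\omega \neq 0$; shrinking $\mathcal{V}$ if necessary, the continuity of $\partial_\tau \omega_\flat$ in $x_d$ preserves the non-vanishing throughout $\mathcal{V}\cap\{\omega_\flat=0\}$.

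Combining these ingredients, the reality of $\omega_\flat$, the non-vanishing of $d\omega_\flat$ on its zero set, the non-radial condition secured by $\partial_\tau \omega_\flat \neq 0$, and the ellipticity of $\lambda_\flat$ together with the damping parameter $\gamma$, yield that $\Op_\gamma(\delta_j)$ is of real principal type in the sense required by Theorem \ref{parametrix}. The main technical obstacle is the passage from $\partial_\tau \omega \neq 0$ at the boundary to $\partial_\tau \omega_\flat \neq 0$ along the bicharacteristics: while it follows from symplectic invariance and continuity in a small conic neighborhood, one must check that the size of the neighborhood $\mathcal{V}$ chosen in Theorem \ref{SymWR} is compatible with the flow time over which non-degeneracy survives. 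Once this is secured, invariance of principal symbols under multiplication by elliptic factors gives the real-principal-type classification for each $\Op_\gamma(\delta_j)$, $1\le j\le \mathdutchcal{s}$.
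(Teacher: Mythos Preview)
Your approach is correct and follows the same overall structure as the paper: both represent $\delta_j^-$ as the pullback of $\mat{\Delta}$ along the Hamiltonian flow of $a_{1,j}$, split into the elliptic region $\gamma>0$ and the characteristic set $\{\gamma=0,\;\omega_\flat=0\}$, and invoke the $\WR$ hypothesis $\partial_\tau\omega\neq 0$ at the boundary. The one point of divergence is precisely the step you flag as the main obstacle. To propagate the non-degeneracy $\partial_\tau\omega_\flat\neq 0$ away from $x_d=0$ you argue by continuity of $\partial_\tau\omega_\flat$ in $x_d$ and shrink $\mathcal{V}$; the paper instead computes $\partial_\tau\delta_j=(\partial_\tau\mat{\Delta})(\phi_{-x_d})\cdot\partial_\tau\phi_{-x_d}$ directly via the chain rule and controls the flow-derivative factor through the variational equation for $\phi_{-x_d}$, obtaining an exponential-type expression that is nowhere vanishing. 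Your route is more elementary and perfectly adequate given that the construction of Theorem~\ref{SymWR} already lives on a small conic neighborhood (and $\delta$ is extended constantly in $x_d$ beyond it); the paper's route has the minor advantage of being uniform in $x_d$ without further shrinking, but this is not essential in the present local setting.
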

        \begin{proof}
            Recall that the unique solution of the transport equation
            \begin{align}
                \left\{\begin{aligned}
         		       \partial_{d}\delta_{i}^{-}+\{\delta_{j}^{-},a_{1,j}\}&=0,\\
         		       \delta_{j}^{-}|_{x_d=0}&=\mat{\Delta},\\
    		    \end{aligned}\right.
            \end{align}  
            is obtained by composing $\mat{\Delta}$ with the inverse of the Hamiltonian flow map $\phi_{x_d}$, that is to say, $\delta_j(X)=(\phi^{*}_{-x_d}\,\mat{\Delta})(X)$ for $X$ in $K\times S^d$, the compact region introduced in Theorem \ref{main}, Step 1. In the sequel, we distinguish two scenarios, namely, when $\gamma>0$ and when $\gamma=0$. In the former, $\mat{\Delta}$  and $\phi^{*}_{-x_d}$ are never vanishing and so $\delta_j(X)$ is elliptic. In the latter, we verify the definition for $-i\delta_j(X)$ rather than for $\delta(X)$. Certainly, 
            \begin{equation*}
                -i\delta_j(\tilde{X})=-i(\phi^{*}_{-x_d}\,\underline{\Delta})(\tilde{X})=\omega(t_\flat,y_{\flat},\zeta_{\flat})
            \end{equation*}
            is real-valued and its roots are simple, as we see now. For $x_d>0$, $\mathrm{Char}(\Op_{\gamma}(\delta_j))$ is the orbit of $\Gamma$ under $\phi_{x_d}$, i.e.,
            \begin{equation*}
                \mathrm{Char}(\delta_j)=\Gamma_j\eqdef\{\Gamma \;\textrm{transported along the flow}\; \phi_{x_d}\}.
            \end{equation*}
            Therefore, we have to show that $\partial_{\tau}\delta_j(X)$ is non-vanishing on $\Gamma_j$, which amounts to proving that
            \begin{equation}\label{derdelta}
                \partial_{\tau}\delta_j=(\partial_{\tau}\phi^{*}_{-x_d}\underline{\Delta})=(\partial_{\tau}\underline{\Delta})(\phi_{-x_d})\partial_{\tau}\phi_{-x_d}\neq 0
            \end{equation}
            on $\Gamma_i$. The first factor in \eqref{derdelta} is different from zero by the very definition of the $\WR$ class, whereas $\partial_{\tau}\phi_{-x_d}$ can be computed via 
            \begin{equation*}
                \partial_{\tau}\phi_{-x_d}=\exp\left(\int_0^{t}{\partial_{\tau}H_{\phi}(s)(\phi_{-s})}\,ds\right)\neq 0,
            \end{equation*}
            where $H_{\phi}(x_d)(\cdot)$ stands for the vector field associated with the flow $\phi_{x_d}$ (see \cite{arnold1992ordinary}). 
        \end{proof}

    \subsection{Existence, uniqueness, and regularity.}  
        Let us focus on the pseudodifferential system
        \begin{equation}\label{DeltaDiag}
            \Delta_{\gamma}\tilde{u} =\begin{pNiceMatrix}
                \begin{pmatrix}
                    \Op_{\gamma}{(\delta_1)}&&\\
                    &\ddots&\\
                    &&\Op_{\gamma}{(\delta_{\mathdutchcal{s}})}\\
                \end{pmatrix}&      \\
                &I_{n-\mathdutchcal{s}}
            \end{pNiceMatrix}\begin{pmatrix}
                    \tilde{u}_1\\
                    \vdots\\
                    \tilde{u}_n\\
                \end{pmatrix}=\begin{pmatrix}
                    \tilde{w}_1\\
                    \vdots\\
                    \tilde{w}_n\\
                \end{pmatrix}=\tilde{w}
        \end{equation}
        equipped with homogeneous Cauchy data. Reasoning component-wise, we get $\mathdutchcal{s}$ (nontrivial) initial value problems that can be solved on $K\times S^d$ according to Theorem \ref{parametrix}. More precisely, there holds
       
       \begin{Theorem}\label{PolarisationWR}
           Consider
           \begin{equation}\label{CauchyDelta}
            \left\{\begin{aligned}
                \Op_{\gamma}{(\delta_j)}\tilde{u}_j &=\tilde{w}_j, \\
                \tilde{u}_j|_{t=0}&=0,
            \end{aligned}\right.
            \end{equation}
            where $\tilde{w}\in H^s_{\gamma}(\overline{\R}_{+}^{1+d})$ and $\Op_{\gamma}{(\delta_j)}$ is defined as in Proposition \ref{DeltaPT}. Then there exists a unique solution of \eqref{CauchyDelta} modulo $C^{\infty}$ for which 
            \begin{equation}\label{PropagationOfSingularitiesII}
               \mathrm{WF}_{s-1}(\tilde{u}_j)\setminus \mathrm{WF}_s(\Op_{\gamma}{(\delta_j)}\tilde{u}_j) \subset\delta_j^{-1}(0).
           \end{equation}
       \end{Theorem}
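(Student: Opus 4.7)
The plan is to apply Theorem \ref{parametrix} componentwise to each scalar equation $\Op_{\gamma}(\delta_j)\tilde{u}_j=\tilde{w}_j$. Proposition \ref{DeltaPT} has already verified that $\Op_{\gamma}(\delta_j)$ is of real principal type on the compact region $K\times S^d$ of interest: it is elliptic when $\gamma>0$, and when $\gamma=0$ the operator $-i\Op_{\gamma}(\delta_j)$ has a real-valued principal symbol whose zero set $\Gamma_j$ consists of simple roots in $\tau$, since $\partial_{\tau}\delta_j\neq 0$ along $\Gamma_j$ by the chain rule and the defining property of the $\WR$ class. Hence the hypotheses of Theorem \ref{parametrix} are satisfied for the scalar pseudodifferential equation at hand.

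The first step is to cast \eqref{CauchyDelta} in the compactly supported form required by Theorem \ref{parametrix}. The homogeneous initial condition $\tilde{u}_j|_{t=0}=0$ suggests extending $\tilde{w}_j$ by zero for $t<0$ and seeking a solution vanishing in the past. Property $\textbf{(C)}$ guarantees that $\delta_j$ is constant outside a compact set in $(t,x)$, so the problem can be localized on a compact manifold without affecting the microlocal content on the support of $\tilde{w}_j$; pseudolocality ensures that the piece of $\tilde{u}_j$ produced far from $K$ is smooth. Theorem \ref{parametrix} then yields a solution $\tilde{u}_j$, unique modulo $C^{\infty}$ and supported in $\{t\geq 0\}$; in particular the initial condition holds modulo smoothing operators. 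For the wavefront set statement, the second conclusion of Theorem \ref{parametrix} gives
\begin{equation*}
\WF(\tilde{u}_j)\subseteq \WF(\Op_{\gamma}(\delta_j)\tilde{u}_j)\cup\{\text{null bicharacteristics of }\delta_j\text{ through }\WF(\tilde{u}_j)\}.
\end{equation*}
Since such bicharacteristics lie entirely in $\mathrm{Char}(\Op_{\gamma}(\delta_j))=\delta_j^{-1}(0)$, and since the parametrix construction for a principal type operator of order $0$ entails the loss of exactly one derivative, this refines to the Sobolev statement
\begin{equation*}
\WF_{s-1}(\tilde{u}_j)\setminus \WF_s(\Op_{\gamma}(\delta_j)\tilde{u}_j)\subseteq \delta_j^{-1}(0),
\end{equation*}
which is \eqref{PropagationOfSingularitiesII}.

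The main obstacle I anticipate is reconciling the local, compact-manifold framework of Theorem \ref{parametrix} with the global half-space setting $\R^{1+d}_{+}$ of the paper, and justifying the quantitative loss of one Sobolev index in passing from the $C^{\infty}$ wavefront to $H^{s-1}_{\gamma}$ wavefront. Both issues are handled by carefully combining Property $\textbf{(C)}$ with a pseudodifferential partition of unity and by tracking the Sobolev order through the successive parametrix correctors; none of these are conceptually difficult but they require some bookkeeping to state cleanly in the parameter-dependent calculus $\mathrm{OPS}_{\gamma}^{m}$.
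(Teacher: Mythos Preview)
Your proposal is correct and follows exactly the approach of the paper, which simply states that the theorem is ``a direct application of Theorem \ref{parametrix} and Proposition \ref{DeltaPT}.'' Your additional discussion of compactification via Property \textbf{(C)}, extension by zero, and the passage to the Sobolev wavefront set merely fleshes out details the paper leaves implicit.
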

       \begin{proof}
            A direct application of Theorem \ref{parametrix} and Proposition \ref{DeltaPT}.
       \end{proof}

       Formula \eqref{PropagationOfSingularitiesII} and Proposition \ref{DeltaHminus} allow us to characterize $H^s_{\Delta}$ alternatively.

       \begin{Proposition}\label{H^s'}
             Let 
             \begin{align*}
                  H^s_{\Delta}&\eqdef\{v \in \mathcal{S}'(\R^{1+d}_+,\,\R^n): \Lambda^s_{\gamma} v \in L_{\Delta}^2(\R^{1+d}_+,\,\C^n)\}
              \end{align*} 
              be the function space introduced in Theorem \ref{main}. Then 
              \begin{equation*}
                  H^s_{\Delta}=\{u\in H_{\gamma}^{s-1}: \Delta_{\gamma} u \in H_{\gamma}^s\}.
              \end{equation*}
        \end{Proposition}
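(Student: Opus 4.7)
The plan is to unwind both definitions and show each inclusion using Proposition~\ref{DeltaHminus} together with the elementary fact that $[\Delta_{\gamma},\Lambda^{s}_{\gamma}]$ is a pseudodifferential operator of order $s-1$. Recall that $v\in H^{s}_{\Delta}$ means $\Lambda^{s}_{\gamma}v\in L^{2}_{\Delta}$, i.e.\ $\Delta_{\gamma}\Lambda^{s}_{\gamma}v\in L^{2}_{\gamma}$; the point is therefore to trade the order in which $\Delta_{\gamma}$ and $\Lambda^{s}_{\gamma}$ are applied, paying only a loss of one derivative.

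For the inclusion $H^{s}_{\Delta}\subseteq\{u\in H^{s-1}_{\gamma}:\Delta_{\gamma}u\in H^{s}_{\gamma}\}$, suppose $u\in H^{s}_{\Delta}$. I would first apply Proposition~\ref{DeltaHminus} to $v=\Lambda^{s}_{\gamma}u$, which gives
\begin{equation*}
    \|u\|_{s-1,\gamma}=\|\Lambda^{s}_{\gamma}u\|_{-1,\gamma}\leq \tfrac{C}{\gamma_{0}}\|\Delta_{\gamma}\Lambda^{s}_{\gamma}u\|_{0,\gamma}<\infty,
\end{equation*}
so $u\in H^{s-1}_{\gamma}$. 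Then I would write
\begin{equation*}
    \Lambda^{s}_{\gamma}\Delta_{\gamma}u=\Delta_{\gamma}\Lambda^{s}_{\gamma}u-[\Delta_{\gamma},\Lambda^{s}_{\gamma}]u.
\end{equation*}
The first term is in $L^{2}_{\gamma}$ by hypothesis, while the commutator has order $s-1$ and hence maps $H^{s-1}_{\gamma}\to L^{2}_{\gamma}$, so the second term is in $L^{2}_{\gamma}$ as well. Therefore $\Delta_{\gamma}u\in H^{s}_{\gamma}$.

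For the reverse inclusion, I would take $u\in H^{s-1}_{\gamma}$ with $\Delta_{\gamma}u\in H^{s}_{\gamma}$ and simply rewrite
\begin{equation*}
    \Delta_{\gamma}\Lambda^{s}_{\gamma}u=\Lambda^{s}_{\gamma}\Delta_{\gamma}u+[\Delta_{\gamma},\Lambda^{s}_{\gamma}]u.
\end{equation*}
The first summand lies in $L^{2}_{\gamma}$ because $\Delta_{\gamma}u\in H^{s}_{\gamma}$, and the second is again handled by the order count on the commutator combined with $u\in H^{s-1}_{\gamma}$. Hence $\Lambda^{s}_{\gamma}u\in L^{2}_{\Delta}$, i.e.\ $u\in H^{s}_{\Delta}$.

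No step is really an obstacle here: the only nontrivial ingredient is Proposition~\ref{DeltaHminus}, which upgrades the trivial upper bound $\|\Delta_{\gamma}\,\cdot\,\|\lesssim\|\cdot\|_{0,\gamma}$ to a matching lower bound in $H^{-1}_{\gamma}$, thereby guaranteeing the a priori regularity $u\in H^{s-1}_{\gamma}$ that one needs to even make sense of the commutator correction. If anything subtle were to arise, it would be the restriction to the half-space $\R^{1+d}_{+}$, but since all arguments are expressed through tangential/weighted pseudodifferential calculus and norms that already incorporate the $x_{d}>0$ constraint, standard symbolic calculus suffices and no extension/restriction step is required.
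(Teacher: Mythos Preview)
Your proof is correct and follows essentially the same approach as the paper: both directions are obtained by combining Proposition~\ref{DeltaHminus} (to secure $u\in H^{s-1}_{\gamma}$ from $\Delta_{\gamma}\Lambda^{s}_{\gamma}u\in L^{2}_{\gamma}$) with the commutator identity $\Delta_{\gamma}\Lambda^{s}_{\gamma}=\Lambda^{s}_{\gamma}\Delta_{\gamma}+[\Delta_{\gamma},\Lambda^{s}_{\gamma}]$ and the fact that $[\Delta_{\gamma},\Lambda^{s}_{\gamma}]\in\mathrm{OPS}^{s-1}_{\gamma}$. The only cosmetic difference is that the paper treats the inclusions in the opposite order.
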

        \begin{proof}
             Let $u\in \underline{H}^s_{\Delta}\eqdef\{u\in H_{\gamma}^{s-1}: \Delta_{\gamma} u \in H_{\gamma}^s\}$. By definition, $u \in H_{\gamma}^{s-1}$ and $\Lambda^{s}\Delta_{\gamma} u \in L^2_{\gamma}$, so we need to prove that $\Delta_{\gamma}\Lambda_{\gamma}^su\in L^2_{\gamma}$. Indeed, from
             \begin{equation}\label{commutatornorm}
                 \Delta_{\gamma}\Lambda_{\gamma}^su= \Lambda_{\gamma}^s\Delta_{\gamma} u+[\Delta_{\gamma},\Lambda_{\gamma}]u,
             \end{equation} 
             it will be enough to check that $[\Delta_{\gamma},\Lambda_{\gamma}]u\in L_{\gamma}^2$. But this is a straightforward consequence of the fact that $[\Delta_{\gamma},\Lambda^s_{\gamma}]$ is an operator of order $s-1$ and $u\in H^{s-1}_{\gamma}$. 
             
             Suppose now that $u\in H^s_{\Delta}$, meaning that $\Delta_{\gamma}\Lambda_{\gamma}^su\in L^2_{\gamma}$. From Proposition \ref{DeltaHminus}, 
             \begin{equation*}
                 \Vert{u}\Vert_{s-1}\leq \Vert{\Lambda^s_{\gamma}}u\Vert_{-1}\leq \Vert{\Delta_{\gamma}\Lambda^s_{\gamma}}u\Vert< \infty,
             \end{equation*}
             and thus it is clear that $\Lambda_{\gamma}^s\Delta_{\gamma} u\in L^2_{\gamma}$  from \eqref{commutatornorm} and the same argument as before.
        \end{proof}  
        We can interpret Theorem \ref{PolarisationWR} and Proposition \ref{H^s'} as follows: given that $\tilde{w}\in H_{\gamma}^s$, the first $\mathdutchcal{s}$ components experience a loss of regularity of one derivative, while the others remain unchanged. This supports Serre's observation in \cite{serre2005solvability} that the solution $\tilde{u}$ exhibits a polarization effect around the critical set $\Gamma$.  
            
        \textbf{Acknowledgments.} This work was supported by the DAAD and the Research Training Group 2491 ``Fourier Analysis and Spectral Theory" from the University of Göttingen. The author thanks Professor Ingo Witt for his valuable input and generous sharing of his time and expertise. Special thanks also go to Dr. Christian Jäh for his decisive help with one of the lemmas of this paper.
        
        \bibliographystyle{alpha}
        \bibliography{Main}
        \newpage

\end{document}